\newtheorem*{thm}{Theorem}
\newtheorem{theorem}{Theorem}
\newtheorem{corollary}{Corollary}
\newtheorem{proposition}{Proposition}
\newtheorem{remark}{Remark}
\newtheorem{lemma}{Lemma}
\begin{document}

	\author{Sumit Kumar }
	
	\title{Subconvexity bound for $\mathrm{GL(3)} \times \mathrm{GL(2)}$ $L$-functions in  $\mathrm{GL(2)}$ spectral aspect}
	
	\address{Sumit Kumar \newline  Stat-Math Unit, Indian Statistical Institute, 203 B.T. Road, Kolkata 700108, India; email: sumitve95@gmail.com}

	\subjclass[2010]{Primary 11F66, 11M41; Secondary 11F55}
	\date{\today}
	
	\keywords{Maass forms, subconvexity, Rankin-Selberg $L$-functions}.
	
	\maketitle
	
	\section*{ Abstract}
	
	Let $\pi$ be a  Hecke-Maass cusp form for $\mathrm{SL(3, \mathbb{Z})}$  and $f$ be a holomorphic cusp form for $\mathrm{SL(2,\mathbb{Z})}$  of weight $k$ or a Hecke-Maass cusp form corresponding to the Laplacian eigenvalue $1/4+k^2$, $k\geq 1$,  for $\mathrm{SL(2,\mathbb{Z})}$. In this paper, we prove the following subconvexity bound
	\begin{align*}
		L\left({1}/{2},\  \pi \times f\right) \ll_{\pi,\,\epsilon}   k^{\frac{3}{2}-\frac{1}{51}+\epsilon}. 
	\end{align*}
	

	\section{ Introduction}
	A degree $d $  automorphic $L$-function $L(s, F)$ associated to an automorphic form $F$   is a Dirichlet series with an  Euler product of degree $d$ and   satisfying  some ``nice" analytic properties. In fact, it has a meromorphic continuation to the whole  complex plane $\mathbb{C}$ and its completed $L$-function satisfies a functional equation relating its value at $s$ to the value of the corresponding  dual $L$-function at  $1-s$.  One may apply  the Phragm\'en-Lindel\"{o}f principle together with the functional equation to get an upper bound
	 $L(1/2+it, F)  \ll_{d,\epsilon} (C(F,t))^{1/4+\epsilon}$, for any $\epsilon>0$,  on the critical line $ \Re s =1/2$.  Here $C(F,t)$ is a quantity, so-called the analytic conductor, which measures the complexity of the $L$-function and encapsulates the main parameters (level, spectral parameters, etc.) attached to the form $F$. The resulting bound is usually referred to as the convexity bound (or the trivial bound). It is conjectured, known as the Lindel\"{o}f Hypothesis, that the exponent $1/4$ can be reduced to $0$.  While the  Lindel\"{o}f Hypothesis is still out of reach, breaking the convexity bound,  i.e, reducing the exponent $1/4$ by  any small quantity,  known as the subconvexity problem,   is a challanging yet an   interesting problem.  \\

	For degree one $ L$-functions ($\zeta(s)$ and $L(s, \chi)$), such  estimates are known due to Weyl (\cite{Weyl}) and Hardy-Littlewood in the $t$-aspect and due to Burgess \cite{Burg}  in the  level aspect. For degree two $L$-functions, the  first subconvexity bound  was  achieved by  Good \cite{gl3gl2ood} in the $t$-aspect, by Duke-Friedlander-Iwaniec \cite{DFI}, \cite{DFI-2}, \cite{DFI-2.1} in the level aspect   and by Iwaniec \cite{iwa} in the spectral aspect. For  degree three $L$-functions attached to  self-dual forms, such estimates were first  obtained  by  Li \cite{Li}  in the $t$-aspect   in a groundbreaking work.  Li's work was generalised to all $\mathrm{GL(3)}$ forms by  Munshi  \cite{munshi1}, by  introducing  a novel delta method  which he also applied in resolving the  subconvexity  problem for $\mathrm{GL(3)}$ $L$-functions in the twist aspect  \cite{annals}. In  $\mathrm{GL(3)}$ spectral aspect, when  spectral  parameters  of a $\mathrm{GL(3)}$ form,  $\pi$ say,  are in generic position, subconvexity estimates  for $L(1/2, \pi)$ were obtained by  Blomer-Buttcane \cite{Blomer}. \\

	For higher degree $ L$-functions, the subconvexity problem becomes even more challenging,  and hence  it is still open except for a few particular cases of Rankin-Selberg convolution $L$-functions.  For  the Rankin-Selberg $L$-functions on $\mathrm{GL(2)} \times \mathrm{GL(2)}$,  subconvexity bounds are known due to Michel-Venkatesh \cite{MV} in the $t$-aspect,  Sarnak \cite{Sar}, and Lau-Liu-Ye \cite{LLY} in the spectral aspect, and   Kowalski-Michel-Vanderkam \cite{KMV}, Michel \cite{Mic} and  Harcos-Michel \cite{HM} in the level aspect. Some  impressive  subconvexity estimates were obtained by Bernstein-Reznikov \cite{BR} and Venkatesh \cite{Venk}  for  the Rankin-Selberg triple $L$-functions on $\mathrm{GL(2)}$. \\
	
	We will now discuss a few known results for  degree six Rankin-Selberg $L$-functions on  $\mathrm{GL(3)} \times \mathrm{GL(2)}$. To start with,  let  $\pi$ be a normalized Hecke-Maass cusp form of type $(\nu_{1},\nu_{2})$ for $\mathrm{SL(3, \mathbb{Z})}$. Let $f$ be a holomorphic cusp form with weight $k$ or Maass Hecke cusp form with the Laplace eigenvalue $1/4+k^2$  for $\mathrm{SL(2, \mathbb{Z})}$. The associated  Rankin-Selberg $L$-series is given by 
	\begin{align}\label{Lfunction}
		L(s,\pi \times f) = \, \mathop{\sum \sum}_{n,r \geq 1} \frac{\lambda_{\pi}(n,r) \, \lambda_{f}(n)}{(n r^2)^{s}}, \, \, \Re(s) >1.
	\end{align}
	In a pioneering work,  Li  \cite{Li}  studied  the above series  and  obtained subconvexity  for $L(1/2, \pi \times f)$ in the $\mathrm{GL(2)}$ spectral aspect as well as subconvexity for  $L(1/2+it, \pi)$ for a self-dual form $\pi$ (also mentioned above). Her main  theorem was the following:
	\begin{thm}[{X. Li.}]
		Let $\pi$ be a fixed self-dual Hecke-Maass cusp form for $ \mathrm{SL(3, \mathbb{Z})}$ and $u_j$ be an orthonormal basis of even  Hecke-Maass cusp form for $\mathrm{SL(3, \mathbb{Z})}$  corresponding to the Laplacian eigenvalue $1/4+t_j^2$ with $t_j \geq 0$; then for $\epsilon >0$, $T$ large and $T^{\frac{3}{8}+\epsilon} \leq M\leq T^{\frac{1}{2}}$, we have
		\begin{align*}
			\sideset{}{^\prime}\sum_{j}e^{-\frac{(t_j-T)^2}{M^2} } L\left(\frac{1}{2},\pi \times u_j\right)+\frac{1}{4\pi}\int_{-\infty}^{\infty}e^{-\frac{(t_j-T)^2}{M^2}}\left|L\left( \frac{1}{2}-it,\pi\right)\right|^2dt \ll_{\epsilon,\pi}T^{1+\epsilon}M,
		\end{align*} 
		where $\prime$ means summing over the orthonormal basis of even  Hecke-Maass cusp forms.
	\end{thm}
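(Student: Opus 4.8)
The plan is to prove the bound not by estimating individual central values but by treating the left-hand side as a complete spectral first moment of $GL(3)\times GL(2)$ $L$-functions and attacking it with the Kuznetsov trace formula followed by $GL(3)$ Voronoi summation; write $h(t)=e^{-(t-T)^2/M^2}$ throughout. First I would observe that the continuous part is built in rather than imposed: since $\pi$ is self-dual, $L(\tfrac12,\pi\times E_t)=L(\tfrac12+it,\pi)L(\tfrac12-it,\pi)=|L(\tfrac12-it,\pi)|^2$, so the quantity to be bounded is exactly the full spectral average $\sum_j h(t_j)L(\tfrac12,\pi\times u_j)+(\mathrm{Eisenstein})$. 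Inserting an approximate functional equation for $L(\tfrac12,\pi\times u_j)$ --- whose analytic conductor is $\asymp t_j^{6}\asymp T^{6}$ --- writes each central value as a smoothly weighted Dirichlet polynomial $\sum_{n,r}\lambda_\pi(n,r)\lambda_{u_j}(n)(nr^2)^{-1/2}V(nr^2/T^{3})$ of effective length $nr^2\ll T^{3+\epsilon}$, and after interchanging summations it suffices to bound $\sum_{n,r}\lambda_\pi(n,r)(nr^2)^{-1/2}V(nr^2/T^{3})\bigl(\sum_j h(t_j)\lambda_{u_j}(n)+(\mathrm{Eis})\bigr)$.

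Next I would apply the Kuznetsov formula (the version for even forms, with the continuous spectrum included) to the inner average over $u_j$. This yields a diagonal term supported on $n=1$ and an off-diagonal sum $\sum_c c^{-1}\bigl(S(n,1;c)+S(-n,1;c)\bigr)\Phi(4\pi\sqrt n/c)$, where $\Phi$ is the Bessel transform of $h$. The diagonal term equals a constant multiple of $\bigl(\sum_r\lambda_\pi(1,r)r^{-1}V(r^2/T^3)\bigr)\int_{\mathbb R}h(t)\,t\tanh(\pi t)\,dt$; the $t$-integral is $\asymp TM$ and the $r$-sum is a smoothly truncated Dirichlet series convergent for $\Re s>1$, hence $\ll_\pi T^{\epsilon}$, so the diagonal contributes the expected $\ll_\pi T^{1+\epsilon}M$. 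A stationary-phase analysis of $\Phi$ shows that it is negligible unless $\sqrt n/c\gg T^{1-\epsilon}$ --- which forces $c\ll T^{1/2}$ for the leading scale $r\asymp1$ --- and that in the relevant range $\Phi(4\pi\sqrt n/c)$ oscillates like $e(\pm2\sqrt n/c)$ with amplitude $\asymp TM(\sqrt n/c)^{-1/2}$; trivially the off-diagonal is then still of size $\asymp T^{5/2}M$, so a saving of $T^{3/2}$ over the trivial bound is required.

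To obtain that saving I would open the Kloosterman sums and apply the $GL(3)$ Voronoi summation formula to the $n$-sum. Thanks to the extra oscillation $e(\pm2\sqrt n/c)$ inherited from $\Phi$ the dual sum is not degenerate: it has length $m\ll T^{3/2}$, and one is left with an expression of the shape $\sum_{c\ll T^{1/2}}c^{-1}\sum_{m\ll T^{3/2}}\lambda_\pi(1,m)\,\mathcal C(m,c)\,\mathcal I(m,c)$, where $\mathcal C$ is an explicit character sum modulo $c$ (coming from the opened Kloosterman sum together with the arithmetic factor of Voronoi) and $\mathcal I$ is a new oscillatory archimedean integral. I would then strip off $\lambda_\pi(1,m)$ by Cauchy--Schwarz, using $\sum_{m\ll X}|\lambda_\pi(1,m)|^2\ll_\pi X^{1+\epsilon}$ (Rankin--Selberg), open the resulting square, and carry out the remaining sum over $m$ by Poisson summation: the zero frequency reproduces a contribution $\ll T^{1+\epsilon}M$ and the non-zero frequencies contribute (after a stationary-phase evaluation of $\mathcal I$ and standard estimates for $\mathcal C$) a quantity $\ll T^{1+\epsilon}(M+T^{3/4}M^{-1})$. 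This is $\ll T^{1+\epsilon}M$ exactly when $M\gg T^{3/8+\epsilon}$, which is the source of the lower restriction on $M$; the upper restriction $M\le T^{1/2}$ is what keeps the modulus $c$ and the dual length $m$ inside the ranges where the successive stationary-phase expansions are valid.

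The step I expect to be the main obstacle is this last one. Once the two summation formulae have completely arithmetised the problem, what remains is a delicate analytic estimate in which the oscillation coming from the Bessel transform of $h$, the $GL(3)$ Hankel-type kernel produced by Voronoi, and the phases generated by Poisson summation all interact, and the difficulty is to organise the Cauchy--Schwarz and Poisson steps so that nothing beyond $T^{\epsilon}$ is lost and to verify that the post-Poisson off-diagonal genuinely decays as claimed. By contrast, the approximate functional equation, the Kuznetsov formula, the diagonal computation, and the invocation of $GL(3)$ Voronoi summation should all be routine.
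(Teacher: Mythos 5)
First, a point of comparison: the paper does not prove this statement at all --- it is quoted as background from X.~Li's work \cite{Li}, with only the remark that she ``adapted Conrey--Iwaniec's moment method''. So the honest benchmark is Li's original argument. Your skeleton does match its architecture: approximate functional equation of length $T^{3+\epsilon}$, Kuznetsov for even forms with the continuous spectrum furnishing the $|L(1/2-it,\pi)|^2$ term via self-duality, a diagonal of size $T^{1+\epsilon}M$, localization of the Bessel transform to $4\pi\sqrt{n}/c\gg T^{1-\epsilon}$ (hence $c\ll T^{1/2+\epsilon}$), and $GL(3)$ Voronoi producing a dual sum of length about $T^{3/2}$. (One routine omission: Kuznetsov carries the harmonic weights $\asymp L(1,\mathrm{sym}^2 u_j)^{-1}t_j^{o(1)}$, which must be inserted and removed at a cost of $T^{\epsilon}$; you never address this.)

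The genuine gap is the final step, which is exactly where the theorem lives. You assert that Cauchy--Schwarz in the dual variable $m$ followed by Poisson summation yields $\ll T^{1+\epsilon}\left(M+T^{3/4}M^{-1}\right)$, but you specify neither the modulus for Poisson, nor the sizes of the character sum $\mathcal{C}(m,c)$ and the archimedean integral $\mathcal{I}(m,c)$, nor --- most importantly --- the mechanism by which $M$ enters the post-Voronoi expression at all. After Voronoi the only source of $M$-dependence is the integral transform: the $t$-average of width $M$ (equivalently the amplitude/oscillation of the Kuznetsov transform) is what shortens the effective range of the dual variable and controls the stationary-phase contributions. Unless that localization is tracked quantitatively, the claimed $T^{3/4}M^{-1}$ for the nonzero frequencies, and hence the threshold $M\geq T^{3/8+\epsilon}$, is reverse-engineered from the statement rather than derived. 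In Li's proof this is precisely the delicate part, and it is organized differently: there is no Cauchy--Schwarz-plus-Poisson step; the Hankel-type transforms are evaluated by stationary phase, the resulting $M$-dependent truncation of the dual sum does the work, the arithmetic reduces to Ramanujan-type sums estimated directly, and the exponent $3/8$ emerges from those integral estimates together with the Rankin--Selberg bound. So your proposal is a sound roadmap up through the off-diagonal set-up, but the decisive analytic estimates --- the only non-routine content of the theorem --- are asserted rather than proved, and your alternative Cauchy--Poisson organization would still require a complete stationary-phase analysis of $\mathcal{I}(m,c)$, including its $M$-dependence, before the exponents you wrote down could be checked.
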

	As a corollary, she obtained  $	L\left({1}/{2},\pi \times u_j\right) \ll_{\epsilon, \pi} (1+|t_j|)^{\frac{3}{2}-\frac{1}{8}+\epsilon}.$
	She adapted Conrey-Iwaniec's moment method (see \cite{CI}) to prove the above  theorem. The fact  $ L\left({1}/{2},\pi \times u_j\right) \geq 0$ plays a crucial role in her approach. 
	Unfortunately, the above fact does not hold  for  non-self-dual $ \mathrm{GL(3)} $ forms, that is why she only dealt with the  self-dual forms. Recently,  Munshi  \cite{munshi12}, using  his delta method,  obtained  subconvexity for  $L(s,\pi \times f)$ in the $t$-aspect proving  the following result:
	\begin{align*}
		L\left({1}/{2}+it,\pi \times f\right) \ll_{\epsilon,f,\pi} (1+|t|)^{\frac{3}{2}-\frac{1}{51}+\epsilon}.
	\end{align*}
	His method is insensitive to  self-duality of the $ \mathrm{GL(3)} $ forms.  Thus he could obtain the above result for any $ \mathrm{GL(3)} $ form. Using a similar approach,  Sharma \cite{prahlad} and the author, Mallesham and  Singh \cite{KMS} proved  subconvexity bounds in the $\mathrm{twist}$ and the $ \mathrm{GL(3)} $ spectral aspect (in some cases), respectively. It is natural to ask a similar question in other aspects as well. As suggested by  Munshi, we vary the $\mathrm{GL(2)}$ form  and    establish  subconvexity  for $L(1/2, \pi \times f)$ in the $ \mathrm{GL(2)} $ spectral aspect. Our main theorem  is the following:
	\begin{theorem}\label{Main}
		Let $\pi$ be a fixed Hecke-Maass cusp form for $ \mathrm{SL(3, \mathbb{Z})}$   and $f$ be a holomorphic cusp form with weight $k$ or a Hecke-Maass cusp form corresponding to the Laplacian eigenvalue $1/4+k^2$,  $k \geq 1$,   for $ \mathrm{SL(2, \mathbb{Z})}$ . Then, for any $\epsilon >0$,  we have 
		\begin{align*}
			L\left({1}/{2},\, \pi \times f\right) \ll_{\pi,\epsilon}   k^{\frac{3}{2}-\frac{1}{51}+\epsilon}.
		\end{align*}
	\end{theorem}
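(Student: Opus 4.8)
The strategy is Munshi's delta-method/DFI-conductor-lowering approach, adapted from the $t$-aspect result $L(1/2+it,\pi\times f)\ll (1+|t|)^{3/2-1/51+\epsilon}$ to the $GL(2)$ spectral (weight) aspect. The starting point is an approximate functional equation for $L(1/2,\pi\times f)$, which after a dyadic decomposition reduces matters to bounding sums of the shape
\begin{align*}
S(N) \;=\; \sum_{n\sim N} \lambda_\pi(n,1)\,\lambda_f(n)\,V\!\left(\frac{n}{N}\right),
\end{align*}
where $V$ is a smooth bump function and the effective length is $N\asymp k^{3}$ (the conductor of $L(s,\pi\times f)$ in the $k$-aspect being $\asymp k^{6}$, so the dual sum has length $k^{3}$); convexity corresponds to $S(N)\ll N^{1/2}k^{\epsilon}=k^{3/2+\epsilon}$, and we want to save a power $k^{1/51}$ over this. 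First I would separate the oscillation of $\lambda_\pi(n,1)$ from that of $\lambda_f(n)$ by introducing Munshi's version of the circle method: write the detection of $n=m$ via a ``DFI-type'' $\delta$-symbol in which one inserts an additional average --- here, I would use the $GL(2)$ Petersson/Kuznetsov trace formula on $f$ itself (i.e.\ a conductor-lowering trick using the family of holomorphic forms of weight $k$, or a Kloosterman-refinement as in Munshi's later papers) to create a favorable factorization of the modulus.

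\textbf{Key steps, in order.} (1) Apply the approximate functional equation and dyadic subdivision to reduce to $S(N)$ with $N\asymp k^{3}$. (2) Insert the conductor-lowering device: detect the equality of the $GL(3)$ variable and the $GL(2)$ variable by an expression built from additive characters to moduli $q\sim Q$ together with an extra integral that localizes the frequency, choosing $Q$ so that the two resulting pieces are balanced; the weight-$k$ aspect enters through the Bessel-function weight $J_{k-1}$ attached to $f$, whose transition range $x\asymp k$ is what plays the role that ``$t$'' played in Munshi's argument. (3) Open $\lambda_f(n)$ by its own Voronoi/functional equation (or by the Petersson formula over the weight-$k$ family), and apply the $GL(3)$ Voronoi summation formula to the $n$-sum against $\lambda_\pi(n,1)$; this replaces $n\sim N$ by a dual sum of length $\asymp Q^{3}/N$ twisted by hyper-Kloosterman sums $\mathrm{Kl}_3$. (4) Apply Cauchy--Schwarz in the remaining outer variable to eliminate the $GL(3)$ Fourier coefficients (using Ramanujan on average, i.e.\ the Rankin--Selberg bound $\sum_{n\le x}|\lambda_\pi(n,1)|^{2}\ll x^{1+\epsilon}$), arriving at a ``diagonal plus off-diagonal'' count. (5) Evaluate the resulting character/exponential sums --- this is where hyper-Kloosterman sums, Ramanujan sums, and stationary phase in the archimedean (Bessel) integrals all combine; the off-diagonal terms are handled by a second application of Poisson summation, and one extracts the saving from the length of the dual sum being shorter than the trivial count together with square-root cancellation in the Kloosterman sums. (6) Optimize $Q$ (and any secondary parameters) to land on the exponent $3/2-1/51$, exactly mirroring the optimization in \cite{munshi12}.

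\textbf{Main obstacle.} The crux --- and the reason the final exponent is only $1/51$ rather than something stronger --- is the analysis in step (5): after Cauchy--Schwarz one faces a sum of products of two hyper-Kloosterman sums (or a $GL(3)$ Kloosterman sum against a $GL(2)$ Kloosterman sum) with an archimedean integral whose stationary phase depends delicately on $k$; genuine cancellation there is hard to extract, and one must settle for bounding via Weil/Deligne bounds for $\mathrm{Kl}_3$ plus careful bookkeeping of the zero-frequency (diagonal) contribution, which is itself barely smaller than the convexity threshold. A secondary technical point specific to the weight aspect (as opposed to the eigenvalue aspect) is that the relevant $GL(2)$ integral transforms involve the $J$-Bessel function $J_{k-1}(x)$ rather than a $\cos/\sin$ kernel; handling its oscillatory, transitional, and decaying ranges uniformly in $k$ --- via the uniform asymptotics for $J_{k-1}$ near $x=k$ --- requires care but is by now standard (it also underlies the companion bound $L(1/2,f)\ll k^{1/2-1/153+\epsilon}$ obtained by taking $\pi$ to be an Eisenstein series, where the numerology improves by the factor $3$ coming from degree $6\to 2$). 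I expect the entire argument to go through with the same structural backbone as \cite{munshi12}, with the Bessel-function bookkeeping the only genuinely new ingredient.
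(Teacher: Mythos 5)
Your overall skeleton (approximate functional equation at $N\asymp k^3$, DFI delta symbol with a conductor-lowering device, $GL(3)$ and $GL(2)$ Voronoi, stationary-phase control of $J_{k-1}$ near the transition range, Cauchy--Schwarz plus Poisson, then optimization) is indeed the route the paper takes, following \cite{munshi12}. But there is a genuine gap at the step you yourself flag as the crux: you never pin down the conductor-lowering device, and the options you lean towards are not the one that makes the argument work. Averaging over the weight-$k$ Petersson/Kuznetsov family (or a Kloosterman refinement) changes the problem into a moment/family estimate in the spirit of X.~Li, which is exactly what one wants to avoid for an individual, possibly non-self-dual $\pi$; the device actually used is an artificial $t$-integral $\frac{1}{T}\int V(t/T)(n/m)^{it}\,dt$ inserted alongside $\delta(n-m)$, with $Q=\sqrt{N/T}$ and $T$ a second free parameter. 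This parameter is not cosmetic: after $GL(3)$ Voronoi the dual length is $\asymp T^{3/2}N^{1/2}$ (not $Q^3/N$ as you write), after $GL(2)$ Voronoi the $m$-length is $\asymp k^2/T$, and the whole method only closes because the zero frequency after Poisson forces $T<k$ while the non-zero frequencies force $T>k^{1/2}$; the paper ends up with $T=k^{41/51}$. Deferring this to ``optimize $Q$ and any secondary parameters, mirroring \cite{munshi12}'' skips precisely the weight-aspect balancing that produces $1/51$, including the truncation $r\le k^{\theta}$ in the approximate functional equation (you set $r=1$ throughout), whose optimal $\theta=2/51$ is tied to the same optimization.

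Two smaller inaccuracies: with the Voronoi formula used here the dual $GL(3)$ sum is twisted by ordinary Kloosterman sums $S(r\bar a,\pm n_2;qr/n_1)$, and after executing the $a$-sum and the Cauchy--Poisson step the arithmetic reduces to elementary congruence counts --- no hyper-Kloosterman sums or Weil--Deligne input is needed; and the Cauchy--Schwarz is taken in the dual $GL(3)$ variable $n_2$ with a single subsequent Poisson summation in $n_2$, rather than a ``second application of Poisson'' on off-diagonal terms. Neither of these is fatal, but together with the unresolved conductor-lowering step they mean the proposal, as written, does not yet constitute a proof of the exponent $3/2-1/51$.
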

	\begin{remark} 
		$\mathrm{(1)}$ We follow Munshi  \cite{munshi12} to prove the above theorem. As  mentioned before, the method is insensitive to  self-duality of the  $ \mathrm{GL(3)} $ forms, we also obtain our result for any $ \mathrm{GL(3)} $ form. Thus we generalise   Li's main result \cite{Li}. Although our bound is weaker than her's, it yields a subconvexity.\\
		\noindent
		$\mathrm{(2)}$ Our arguments in the proof work for both Maass and holomorphic forms. For the exposition of the method, we  will give details for holomorphic forms only.\\
		\noindent
		$\mathrm{(3)}$ Our method also works for any fixed central value $1/2+it$. In this case, the implied constant will depend polynomially on $t$.  For simplicity, we take $t=0$ in the proof.
		
	\end{remark}
	If we take $\pi$ to be the minimal Eisenstein series with Langlands parameters ($\alpha_{1}$, $\alpha_{2}$, $\alpha_{3}$) for $\mathrm{SL(3,\mathbb{Z})}$ in \eqref{Lfunction},  we observe that (see \cite[p.\,314]{gold}) 
	\begin{align*}
		L(s,\pi)=\prod_p\prod_{i=1}^{3}\left(1-p^{\alpha_{i}-s}\right)^{-1}=\zeta(s-\alpha_{1})\zeta(s-\alpha_{2})\zeta(s-\alpha_{3}). 
	\end{align*}
	It is also well-known that 
	\begin{align*}
		L(s,f)=\sum_{n=1}^{\infty}\frac{\lambda_f(n)}{n^s}=\prod_p\prod_{j=1}^{2}\left(1-\beta_{p,j}p^{-s}\right)^{-1},
	\end{align*}
	where $\beta_{p,1}\beta_{p,2}=1$, $\beta_{p,1}+\beta_{p,2}=\lambda_f(p)$, and $\lambda_f(n)$ denote the normalised  Fourier coefficients of $f$.  Using  the Rankin-Selberg  theory (see \cite[p.\,379]{gold}), we get 
	\begin{align*}
		L(s,\pi \times f)&= \prod_p\prod_{i=1}^{3}\left(1-\beta_{p,1}\, p^{\alpha_{i}-s}\right)^{-1}\left(1-\beta_{p,2}\, p^{\alpha_{i}-s}\right)^{-1} \\
		&= L(s-\alpha_{1},f)L(s-\alpha_{2},f)L(s-\alpha_{3},f).
	\end{align*}
	Our method  also fits  for the above $L$-functions as well. Hence we also obtain the following result:
	\begin{theorem}
		Let	$f$ be a holomorphic cusp form with weight $k$ or Hecke-Maass cusp form corresponding to the Laplacian eigenvalue $1/4+k^2$, $k \geq 1$,  for $\mathrm{SL(2,\mathbb{Z})}$. Then, for $\epsilon >0$, we have 
		\begin{align*}
			L\left( {1}/{2},\,  f\right) \ll_{\epsilon}   k^{\frac{1}{2}-\frac{1}{153}+\epsilon}.
		\end{align*}
	\end{theorem}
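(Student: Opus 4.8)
The plan is to obtain this as a specialization of (the proof of) Theorem \ref{Main}, exploiting the factorization recorded just above: if $\pi$ is the minimal parabolic Eisenstein series $E_{\underline{\alpha}}$ for $SL(3,\mathbb{Z})$ with Langlands parameters $\underline{\alpha}=(\alpha_1,\alpha_2,\alpha_3)$, $\alpha_1+\alpha_2+\alpha_3=0$, then
\begin{align*}
L\!\left(\tfrac12,E_{\underline{\alpha}}\times f\right)=L\!\left(\tfrac12+\alpha_1,f\right)L\!\left(\tfrac12+\alpha_2,f\right)L\!\left(\tfrac12+\alpha_3,f\right).
\end{align*}
Since $f$ is cuspidal this product is entire, so no residual term enters the approximate functional equation, and the entire apparatus used for Theorem \ref{Main} remains available.

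The first step is to re-examine the proof of Theorem \ref{Main} and confirm that it never uses cuspidality or self-duality of $\pi$ in an essential way: the only inputs are (i) the Ramanujan bound on average $\sum_{nr^2\le X}|\lambda_\pi(n,r)|^2\ll_\pi X^{1+\epsilon}$, coming from Rankin--Selberg theory, and (ii) the $GL(3)$ Voronoi summation formula together with its hypergeometric/Bessel-type integral kernel. Both persist for $E_{\underline{\alpha}}$: the coefficients $\lambda_{E_{\underline{\alpha}}}(n,r)$ are explicit shifted-divisor functions and satisfy (i) uniformly for $\underline{\alpha}$ in a bounded set, while the Voronoi formula for $E_{\underline{\alpha}}$ is classical (it essentially reduces to iterated Poisson summation, since $L(s,E_{\underline{\alpha}})=\prod_i\zeta(s-\alpha_i)$), its lower-order/diagonal terms taking a slightly different but harmless shape. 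Crucially one must track uniformity: every estimate in the argument depends on the archimedean data of $\pi$ at most polynomially, so the implied constants stay bounded as $\underline{\alpha}\to 0$. Granting this, the proof of Theorem \ref{Main} yields
\begin{align*}
L\!\left(\tfrac12,E_{\underline{\alpha}}\times f\right)\ll_{\epsilon}k^{\frac32-\frac1{51}+\epsilon}\qquad\text{uniformly for }|\underline{\alpha}|\le 1.
\end{align*}

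Combining the two displays gives $\prod_{i=1}^{3}\bigl|L\!\left(\tfrac12+\alpha_i,f\right)\bigr|\ll_{\epsilon}k^{3/2-1/51+\epsilon}$; letting $\underline{\alpha}\to 0$ and using continuity of $s\mapsto L(s,f)$ on the critical line, we get $\bigl|L(\tfrac12,f)\bigr|^{3}\ll_{\epsilon}k^{3/2-1/51+\epsilon}$, and the stated bound $L(\tfrac12,f)\ll_{\epsilon}k^{1/2-1/153+\epsilon}$ follows on taking cube roots. I do not expect a genuinely new difficulty here: the main obstacle is the careful verification that each step of the proof of Theorem \ref{Main} --- conductor lowering via the delta method, $GL(3)$ and $GL(2)$ Voronoi summation, Cauchy--Schwarz, and the stationary-phase analysis of the resulting exponential sums --- is uniform in the $GL(3)$ parameters, so that the degeneration to the Eisenstein case with parameters tending to zero is legitimate, together with bookkeeping the slightly altered Voronoi terms in that case.
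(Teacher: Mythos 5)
Your proposal is essentially the paper's own argument: the paper simply takes $\pi$ to be the minimal Eisenstein series, invokes the factorization $L(s,\pi\times f)=\prod_{i}L(s+\alpha_i,f)$, asserts that the proof of Theorem \ref{Main} ``fits well'' for this $L$-function (the delta-method approach uses neither cuspidality, self-duality, nor positivity), and extracts $k^{1/2-1/153+\epsilon}$ by taking the cube root of $k^{3/2-1/51+\epsilon}$. Your additional remarks on uniformity in the Langlands parameters and on the degenerate terms in the Eisenstein Voronoi formula are exactly the (unwritten) verifications the paper's one-line justification implicitly relies on.
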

	
	We end the introduction by commenting on the method. As a first step, we use the `conductor lowering' trick as a device to   separate the oscillations using the delta method due to   Duke-Friedlander-Iwaniec (DFI) (see \cite{DFI}). We now apply the  $ \mathrm{GL(3)} $ and $ \mathrm{GL(2)} $ Voronoi  formulae.  A crucial observation, which  was  also present  in  \cite{KMS}, \cite{munshi12} and  \cite{prahlad},  that the $ \mathrm{GL(2)} $ and  $ \mathrm{GL(3)} $ Voronoi formulae together tranform the Ramanujan sum 
	$$ \sideset{}{^\star}{\sum}_{a \, {\rm mod} \, q} \, e \left(\frac{(n-m)a}{q}\right),$$
	arising from  the DFI delta method, into 
	$$ \sideset{}{^\star}{\sum}_{a \, {\rm mod} \, q} S(\overline{a},n;q) e \left(\frac{\overline{a}m}{q}\right),$$
	which boils down to an  additive character $qe(\overline{m}n/q)$  with respect to $n$,  plays a vital role  to prove our main  theorem.  In fact, we save $\sqrt{q}$ extra after  applying the Poisson summation formula in the $n$-variable due to the additive character (see subsection \ref{sketch}).   The  analysis of the integral transforms (see Section \ref{estimates for int}) is one of technical  inputs of the article.  \\
	\noindent 
	
	\paragraph*{\bf{Notation}} Throughout the paper, $e(x)$ means $e^{2\pi i x}$.  By  negligibly small we mean  $O(k^{-A})$ for any large postive constant  $A >0$. In particular, we  take $A=2020$.  The letter $\epsilon$ denotes arbitrarily small constant, not necessarily the same at different occurrences. The notation $\alpha \ll A$ will mean that for any $\epsilon >0$, there is a constant $c$ such that $|\alpha| \leq cAk^{\epsilon}$. We also ignore the dependence of the constant on $\pi$ and $\epsilon$, whenever it occurs.  By $\alpha \asymp A$, we mean that $k^{-\epsilon}A \leq \alpha \leq k^{\epsilon}A$, also $\alpha \sim A$ means $A\leq \alpha < 2A$. For absolute explicit constant, we will write $c$ or $c_i$ in the whole paper. \\
	\noindent
	
	\paragraph*{\bf{Acknowledgements.}} This work is a part of the author's Ph.D. thesis and he is  grateful to his advisor Prof. Ritabrata Munshi for suggesting the problem, sharing his beautiful ideas, explaining his ingenious method, and his kind support throughout the work. He  would also like  to thank Prof. Satadal Ganguly, Saurabh Kumar Singh and Kummari Mallesham for their encouragement and  constant support and  Stat-Math Unit, Indian Statistical Institute, Kolkata for the excellent research environment.  Finally, the author would  like to thank the referees for their valuable  suggestions and comments which really helped  in improving  the  presentation of the article. 
	
	\section{ Preliminaries}
	In this section, we will recall some known results which we need in the proof. 
	\subsection{ Holomorphic cusp  forms on $\textrm{GL(2)}$}
	Let $f$ be a holomorphic Hecke eigenform  of weight $k$ for the full modular group $\mathrm{SL(2,\mathbb{Z})}$. The Fourier expansion of $f$ at the cusp  $\infty$ is given by 
	$$f(z) = \sum_{n=1}^{\infty} \, \lambda_{f}(n) \, n^{(k-1)/2} \, e(nz),\ \ \ z \in \mathbb{H}.$$
	We assume that $f$ is normalised so that $\lambda_{f}(1)=1$.  We have   the    well-known  Deligne bound  $	\vert \lambda_{f}(n) \vert \leq d(n), n\geq 1,$ 
%
	where $d(n)$ is the divisor function.  However, in our proof, we only need the Ramanujan bound on average: 
	\begin{align}\label{RS for holomorphic}
		\sum_{n \leq X}|\lambda_{f}(n)|^2 \ll_\epsilon X^{1+\epsilon}, 
	\end{align}
	for any $\epsilon>0$.   We now recall the Voronoi summation formula for the  form $f$  which will be crucially used in our proof. 
	
	\begin{lemma} \label{gl2voronoi}
		Let $\lambda_{f}(n)$ be as above and $g$ be a smooth, compactly supported function on $(0, \infty)$. Let $a$, $q \in \mathbb{Z}$ with $(a,q)=1$. Then we have
		$$\sum_{n=1}^{\infty} \lambda_{f}(n) \, e\left(\frac{an}{q}\right)g(n) = \frac{ 1}{q} \sum_{n=1}^{\infty} \lambda_{f}(n) \, e\left(-\frac{d n}{q}\right)\, h\left(\frac{n}{q^2} \right),$$
		where $ad \equiv 1 (\mathrm{mod} \, q)$ and 
		$$h(y) = 2\pi i^k\int_{0}^{\infty} g(x) \, J_{k-1} \left({4 \pi \sqrt{xy}}\right) \, \mathrm{d}x,$$
		where $J_{k-1}$ is the usual $J$-Bessel function of order $k-1$. 
	\end{lemma}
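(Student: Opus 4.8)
The plan is to derive Lemma~\ref{gl2voronoi} from the modularity of $f$, in the form of the functional equation of the additively twisted $L$-function. Write $\widetilde g(s)=\int_0^\infty g(x)\,x^{s-1}\,dx$ for the Mellin transform of $g$; since $g$ is smooth and compactly supported in $(0,\infty)$, the function $\widetilde g$ is entire and decays faster than any fixed power of $|s|$ in every vertical strip. By Mellin inversion and the bound \eqref{raman bound for gl2}, for $\Re(s)=2$ we may write
\[
\sum_{n\ge 1}\lambda_f(n)\,e\!\left(\frac{an}{q}\right)g(n)=\frac{1}{2\pi i}\int_{(2)}\widetilde g(s)\,L_{a/q}(s)\,ds, \qquad L_{a/q}(s):=\sum_{n\ge 1}\frac{\lambda_f(n)\,e(an/q)}{n^{s}},
\]
the interchange of sum and integral being justified by absolute convergence.

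The second and main step is the functional equation of $L_{a/q}(s)$. I would pick a matrix $\gamma\in SL(2,\mathbb{Z})$ with top row $(a,b)$ and bottom row $(q,d)$, so that $ad\equiv 1\ (\mathrm{mod}\ q)$ as in the statement, and apply $f(\gamma z)=(qz+d)^{k}f(z)$ at the point $z=\gamma^{-1}(a/q+iy)$. A direct computation with $\gamma^{-1}$ gives $qz+d=i/(qy)$, and comparing the Fourier expansions of $f$ on the two sides yields the theta-type identity
\[
\sum_{n\ge1}\lambda_f(n)\,n^{\frac{k-1}{2}}\,e\!\left(\frac{an}{q}\right)e^{-2\pi n y}=\left(\frac{i}{qy}\right)^{\!k}\sum_{m\ge1}\lambda_f(m)\,m^{\frac{k-1}{2}}\,e\!\left(-\frac{dm}{q}\right)e^{-\frac{2\pi m}{q^{2}y}}\qquad(y>0).
\]
Multiplying by $y^{s+\frac{k-1}{2}-1}$, integrating over $y\in(0,\infty)$, and using $\int_0^\infty e^{-cy}y^{w-1}\,dy=\Gamma(w)c^{-w}$ on each term, this becomes $\Lambda_{a/q}(s)=i^{k}q^{\,1-2s}\,\Lambda_{-d/q}(1-s)$, where $\Lambda_{a/q}(s):=(2\pi)^{-s-\frac{k-1}{2}}\,\Gamma\!\left(s+\frac{k-1}{2}\right)L_{a/q}(s)$. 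Since $f$ is cuspidal, $L_{a/q}$ and $L_{-d/q}$ extend to entire functions of finite order.

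It remains to exploit this. As $\widetilde g(s)L_{a/q}(s)$ is entire and $\widetilde g$ decays faster than the polynomial growth of $L_{a/q}$ on vertical lines, I can shift the contour in the integral above from $\Re(s)=2$ to $\Re(s)=-\tfrac12$ without crossing any pole. On the new line $\Re(1-s)=\tfrac32>1$, so after substituting the functional equation I may expand the absolutely convergent Dirichlet series $L_{-d/q}(1-s)=\sum_{m\ge1}\lambda_f(m)e(-dm/q)m^{s-1}$ and interchange sum and integral, obtaining
\[
\sum_{n\ge1}\lambda_f(n)\,e\!\left(\frac{an}{q}\right)g(n)=i^{k}\sum_{m\ge1}\lambda_f(m)\,e\!\left(-\frac{dm}{q}\right)\frac{1}{2\pi i}\int_{(-1/2)}\widetilde g(s)\,q^{1-2s}(2\pi)^{2s-1}\,\frac{\Gamma\!\left(\frac{k+1}{2}-s\right)}{\Gamma\!\left(\frac{k-1}{2}+s\right)}\,m^{s-1}\,ds.
\]
Inserting $\widetilde g(s)=\int_0^\infty g(x)x^{s-1}\,dx$, interchanging once more, and shifting $s\mapsto s+1$, the inner Mellin--Barnes integral collapses by the classical formula $\frac{1}{2\pi i}\int_{(c)}(2\pi)^{2s}q^{-2s}\frac{\Gamma(\frac{k-1}{2}-s)}{\Gamma(\frac{k+1}{2}+s)}\,y^{s}\,ds=J_{k-1}\!\left(4\pi\sqrt{y}/q\right)$ (the Mellin transform of the $J$-Bessel function) to $\tfrac{2\pi}{q}\,J_{k-1}(4\pi\sqrt{xm}/q)$; performing the $x$-integral then produces $\tfrac{2\pi}{q}\,h(m)$ for the $m$-th summand, so the whole expression equals $\tfrac{2\pi i^{k}}{q}\sum_{m\ge1}\lambda_f(m)e(-dm/q)h(m)$, which is the assertion of the lemma.

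The genuinely non-mechanical ingredient is the twisted functional equation of the second step — in particular the explicit modular computation that identifies the dual additive character as $e(-\overline a\,m/q)$ with $\overline a\equiv d$ and pins down the root number $i^{k}$ (this is where holomorphy and the weight $k$ enter decisively). Everything else is routine: the contour shift is legitimate because $f$ is cuspidal, all interchanges are justified by $|\lambda_f(n)|\le d(n)$ together with the rapid decay of $\widetilde g$, and the emergence of the Hankel kernel $J_{k-1}$ is a standard Mellin--Barnes computation. Of course one could alternatively just quote the formula from the literature (e.g. Iwaniec--Kowalski, or Kowalski--Michel--VanderKam); the derivation above is the self-contained route.
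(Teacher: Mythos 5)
Your proposal is correct, and it is essentially the standard argument behind the formula: the paper itself gives no proof, merely citing Iwaniec--Kowalski, and what you have written is in substance the proof one finds there (and in Kowalski--Michel--VanderKam): Mellin inversion, the functional equation $\Lambda_{a/q}(s)=i^{k}q^{1-2s}\Lambda_{-d/q}(1-s)$ obtained from $f(\gamma z)=(qz+d)^k f(z)$ with $\gamma=\left(\begin{smallmatrix}a&b\\ q&d\end{smallmatrix}\right)$, a contour shift, and the Mellin--Barnes representation of $J_{k-1}$. Your modular computation is right ($qz+d=i/(qy)$ at $z=\gamma^{-1}(a/q+iy)$, giving the theta relation with dual character $e(-dm/q)$ and root number $i^k$), and the Gamma-factor bookkeeping leading to $\frac{2\pi}{q}J_{k-1}(4\pi\sqrt{xm}/q)$ checks out. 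Two small points you could tighten if you wanted full rigor: the term-by-term Mellin transform of the theta identity over all $y\in(0,\infty)$ is not absolutely convergent at both ends simultaneously, so one should run the usual Hecke argument (split the $y$-integral at a point and apply the transformation to one half) to get entireness and the functional equation; and the line $\Re w=-3/2$ you land on after the shift $s\mapsto s+1$ lies just outside the strip of validity of the Barnes representation of $J_{k-1}$, so a harmless further contour adjustment (no poles are crossed) is needed before quoting that formula. Neither affects the correctness of the approach.
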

	\begin{proof}
		See \cite[Theorem A.4]{KMV}.
	\end{proof}

	\subsection{Maass cusp forms for $\textrm{GL(2)}$} Let $f$ be a Hecke-Maass eigenform  for $\mathrm{SL(2, \mathbb{Z})}$ with Laplace eigenvalue $1/4 + \nu^2$, $\nu>0$. The Fourier series expansion of $f$ at  the cup  $\infty$ is given by 
	\[
	f(z)= \sqrt{y} \sum_{n \neq 0} \lambda_f(n) K_{ i \nu} (2 \pi |n|y) e(nx), 
	\] 
	where $ K_{ i \nu}(y)$ is the   Bessel function of  the third  kind and $f$ is normalized so that  $\lambda_{f}(1)=1$.   The  Ramanujan-Petersson conjecture, which assert that 
	 $|\lambda_f(n)| \leq  d(n)$ is not known yet. 
	   However, we do not need of such individual bound for our proof. Rather, the  following Ramanujan bound on average  (see  \cite[Lemma 1]{iwa})
	 	\begin{align}\label{RS bound}
	 	\sum_{1\leq n \leq X} \left| \lambda_f(n) \right|^2 \ll_{ \epsilon} \nu^\epsilon X^{1+\epsilon},
	 \end{align} 
 for any $\epsilon>0$,   is sufficient for our purpose. 
 We also  have the  following   Voronoi  summation formula for  the Maass cusp forms, which is similar to the case of  holomorphic cups forms.  
	\begin{lemma}
		Let $\lambda_{f}(n)$ be as above   and $g$ be a smooth, compactly supported function on $(0, \infty)$. Let $a$, $q \in \mathbb{Z}$ with $(a,q)=1$. Then we have
		\begin{align*}
			\sum_{n=1}^{\infty} \lambda_{f}(n) \, e\left(\frac{an}{q}\right)g(n)=\frac{1}{q}\sum_{\pm}\sum_{n=1}^{\infty} {\lambda_f( n)}e\left(\mp \frac{dn}{q}\right)H^{\pm}\left(\frac{n}{q^2}\right),
		\end{align*}
		where $ad \equiv 1 \, \mathrm{mod} \, q$ and  
		\begin{align*}
			H^-(y)=\frac{-\pi}{\sin(\pi i\nu)}  \int_{0}^{\infty} g(x) \, \{J_{2i\nu}-J_{-2i\nu}\} \left({4 \pi \sqrt{xy}}\right) \, \mathrm{d}x,
		\end{align*}
			\begin{align*}
			H^+(y)={4\epsilon_f\cosh(\pi \nu)}  \int_{0}^{\infty} g(x) \,K_{2i\nu}\left({4 \pi \sqrt{xy}}\right) \, \mathrm{d}x.
		\end{align*}
	Here $\epsilon_f$ is the eigenvalue of $f$ under the reflection operator. 
	\end{lemma}
	\begin{proof}
		See \cite[Theorem A.4]{KMV}.
	\end{proof}

	\subsection{Automorphic  forms on $\textrm{GL(3)}$}
 This   section, except for the notations, is taken from \cite{Li}. Let $\pi$ be a Hecke-Maass cusp form of type $(\nu_{1}, \nu_{2})$ for $\mathrm{SL(3, \mathbb{Z})}$. Let $\lambda_{\pi}(n,r)$ denote the normalised Fourier coefficients of $\pi$. Let 
	$${\alpha}_{1} = - \nu_{1} - 2 \nu_{2}+1, \, {\alpha}_{2} = - \nu_{1}+ \nu_{2}  \ \mathrm{and} \ {\alpha}_{3} = 2 \nu_{1}+ \nu_{2}-1$$ 
	be the spectral parameters for $\pi$ (see  \cite{gold}).
	Let $g$ be a compactly supported smooth function on  $ (0, \infty )$ and 
	$$\tilde{g}(s) = \int_{0}^{\infty} g(x) x^{s-1} \mathrm{d}x$$ be its Mellin transform. For $\ell= 0$ and $1$, we define
	\begin{equation}
		\gamma_{\ell}(s) :=  \frac{\pi^{-3s-\frac{3}{2}}}{2} \, \prod_{i=1}^{3} \frac{\Gamma\left(\frac{1+s+{\alpha}_{i}+ \ell}{2}\right)}{\Gamma\left(\frac{-s-{\alpha}_{i}+ \ell}{2}\right)}.
	\end{equation}
	Set $\gamma_{\pm}(s) = \gamma_{0}(s) \mp \gamma_{1}(s)$ and let 
	\begin{align}\label{gl3 integral transform}
		G_{\pm}(y) = \frac{1}{2 \pi i} \int_{(\sigma)} y^{-s} \, \gamma_{\pm}(s) \, \tilde{g}(-s) \, \mathrm{d}s,
	\end{align}
	where $\sigma > -1 + \max \{-\Re({\alpha}_{1}), -\Re({ \alpha}_{2}), -\Re({\alpha}_{3})\}$. With the aid of the above terminology, we now state the $ \mathrm{GL(3)} $ Voronoi summation formula.
	\begin{lemma} \label{gl3voronoi}
		Let $g(x)$ and  $\lambda_{\pi}(n,r)$ be as above. Let $a, \, q \in \mathbb{Z}$ with $q\geq 1, (a,q)=1,$ and  $a\bar{a} \equiv 1(\mathrm{mod} \ q)$. Then we have
		\begin{align*} \label{GL3-Voro}
			\sum_{n=1}^{\infty} \lambda_{\pi}(n,r) e\left(\frac{an}{q}\right) g(n) 
			=q  \sum_{\pm} \sum_{n_{1}|qr} \sum_{n_{2}=1}^{\infty}  \frac{\lambda_{\pi}(n_1,n_2)}{n_{1} n_{2}} S\left(r \bar{a}, \pm n_{2}; qr/n_{1}\right) G_{\pm} \left(\frac{n_{1}^2 n_{2}}{q^3 r}\right),
		\end{align*} 
		where  $S(a,b;q)$ is the  Kloosterman sum which is defined as follows:
		$$S(a,b;q) = \sideset{}{^\star}{\sum}_{x \,{\rm mod}\,  q} e\left(\frac{ax+b\bar{x}}{q}\right).$$
	\end{lemma}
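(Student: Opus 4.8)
The plan is to derive the identity from the functional equation of additively twisted $GL(3)$ $L$-functions --- essentially the route of Miller--Schmid and of Goldfeld--Li, the formula being stated in exactly this normalization in \cite{Li} --- reducing the general $r$ to the case $r=1$ by Hecke multiplicativity.

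First I would use that $g$ is smooth of compact support in $(0,\infty)$, so that Mellin inversion $g(x)=\frac{1}{2\pi i}\int_{(\sigma)}\tilde{g}(s)\,x^{-s}\,\mathrm{d}s$ holds and, for $\sigma$ large, may be exchanged with the (then absolutely convergent) sum over $n$. This reduces the problem to the meromorphic continuation and functional equation of
\[
\mathcal{D}_{a/q}(s)\;=\;\sum_{n=1}^{\infty}\lambda_{\pi}(n,r)\,e\!\left(\frac{an}{q}\right)n^{-s}.
\]
Using the $GL(3)$ Hecke relations (see \cite{gold}) I would then write $\lambda_{\pi}(n,r)$ as a short divisor-twisted combination of the coefficients $\lambda_{\pi}(m,1)$, with the divisors ranging over $d\mid r$; tracking these through the computation is what produces the enlarged divisor condition $n_{1}\mid qr$, the Kloosterman modulus $qr/n_{1}$, and the entry $r\bar{a}$ in the final formula, so that it suffices to treat $\lambda_{\pi}(n,1)$.

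Next, splitting $n$ according to $(n,q)$ and expanding $e(an/q)$ via Gauss sums on each piece, I would express $\mathcal{D}_{a/q}(s)$ as a finite Gauss-sum-weighted combination of Dirichlet-twisted $L$-functions $L(s,\pi\otimes\chi)$ with $\chi$ modulo divisors of $q$. Each of these satisfies a functional equation relating $s$ to $1-s$ whose archimedean factor is a product of three $\Gamma$-quotients whose shape depends only on the parity of $\chi$; this even/odd dichotomy is precisely the $\ell=0,1$ split in the definition of $\gamma_{\ell}(s)$, and the two combinations $\gamma_{\pm}=\gamma_{0}\mp\gamma_{1}$ sort the $\pm n_{2}$ contributions. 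Re-summing the Gauss sums after the functional equation collapses the character sum into the Kloosterman sum $S(r\bar{a},\pm n_{2};qr/n_{1})$, and substituting the result back into the Mellin integral of the first step turns the $s$-integral into exactly $G_{\pm}\!\left(\frac{n_{1}^{2}n_{2}}{q^{3}r}\right)$. Finally I would verify convergence: $G_{\pm}(y)$ decays rapidly as $y\to\infty$ (shift the contour far to the right), which truncates the $n_{2}$-sum, while the $n_{1}\mid qr$-sum is finite.

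The step I expect to be genuinely delicate is the Gauss-sum bookkeeping: keeping exact track of the Gauss sums when $(n,q)>1$, checking that the three archimedean $\Gamma$-factors assemble into precisely $\gamma_{\pm}(s)$ with the stated powers of $\pi$, and confirming that character orthogonality leaves exactly $S(r\bar{a},\pm n_{2};qr/n_{1})$ together with the condition $n_{1}\mid qr$. The archimedean contour shifts (interchanging sum and integral, and the reconstruction of $G_{\pm}$) are routine by comparison. In practice, of course, one simply invokes the general $GL(n)$ Voronoi formula with $n=3$ and specializes.
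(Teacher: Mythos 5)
The paper does not actually prove this lemma: it simply cites Miller--Schmid \cite{miller-schmid}, so your sketch reconstructs the argument behind the citation rather than paralleling anything in the text; your closing remark that one ``simply invokes'' the known $GL(3)$ Voronoi formula is exactly what the paper does. Your outline of the classical route (Mellin inversion, decomposition of the additively twisted Dirichlet series via Gauss sums into multiplicatively twisted $L$-functions, the even/odd functional equations giving $\gamma_{\pm}$, recombination into Kloosterman sums) is the standard Goldfeld--Li style derivation and is correct in broad strokes, but two points you wave at are where the real content sits. First, the Gauss-sum reduction to $L(s,\pi\otimes\chi)$ is clean only for primitive $\chi$; for composite $q$ the non-primitive characters cause genuine trouble, which is precisely why Miller--Schmid argue through automorphic distributions and why the Goldfeld--Li treatment requires substantial care --- so this is more than ``delicate bookkeeping.'' Second, the passage from $r=1$ to general $r$ by Hecke multiplicativity is not a routine two-line reduction: the formula with a fixed second index $r$, in the form stated here and in \cite{Li}, was only later derived from the $r=1$ case via the Hecke relations (work of Zhou and of K\i ral--Zhou), so if you intend to prove rather than cite the lemma, that step needs a genuine argument.
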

	\begin{proof}
		See \cite{miller-schmid}. 
	\end{proof}
	We  need to extract the  oscillations of the  integral transform $G_{\pm}$.  To this end, we have the following lemma:
	\begin{lemma} \label{GL3oscilation}
		Let $G_{\pm}(x)$ be as above,  and  $g(x) \in C_c^{\infty}(X,2X)$. Then for any fixed integer $K \geq 1$ and $xX \gg 1$, we have
		\begin{equation*}
			G_{\pm}(x)=  x \int_{0}^{\infty} g(y) \sum_{j=1}^{K} \frac{c_{j}({\pm}) e\left(3 (xy)^{1/3} \right) + d_{j}({\pm}) e\left(-3 (xy)^{1/3} \right)}{\left( xy\right)^{j/3}} \, \mathrm{d} y + O \left((xX)^{\frac{-K+5}{3}}\right),
		\end{equation*}
		where $c_{j}(\pm)$ and $d_{j}(\pm)$ are some   absolute constants depending on $\alpha_{i}$'s, for $i=1, 2, 3$.  
	\end{lemma}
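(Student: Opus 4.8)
The plan is to establish the asymptotic by combining Stirling's formula for the archimedean factor $\gamma_\pm(s)$ with the method of stationary phase, applied to the Mellin--Barnes integral defining $G_\pm$.

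\emph{Step 1: simplifying $\gamma_\pm$.} In $\gamma_\ell(s)$ I would apply the reflection formula $\Gamma(w)\Gamma(1-w)=\pi/\sin(\pi w)$ to each factor $\Gamma\!\big(\tfrac{-s-\alpha_i+\ell}{2}\big)$ in the denominator, then the Legendre duplication formula to the resulting product $\Gamma\!\big(\tfrac{1+s+\alpha_i+\ell}{2}\big)\,\Gamma\!\big(\tfrac{2+s+\alpha_i-\ell}{2}\big)$, and use that $\alpha_1+\alpha_2+\alpha_3=0$, obtaining $\gamma_\ell(s)=\frac{(2\pi)^{-3s}}{2\pi^{3}}\prod_{i=1}^{3}\sin\frac{\pi(-s-\alpha_i+\ell)}{2}\prod_{i=1}^{3}\Gamma(1+s+\alpha_i)$; since $\sin(\theta+\pi/2)=\cos\theta$ this yields, for $\gamma_\pm=\gamma_0\mp\gamma_1$,
\[
\gamma_\pm(s)=\frac{(2\pi)^{-3s}}{2\pi^{3}}\left(\prod_{i=1}^{3}\sin\frac{\pi(-s-\alpha_i)}{2}\ \mp\ \prod_{i=1}^{3}\cos\frac{\pi(-s-\alpha_i)}{2}\right)\prod_{i=1}^{3}\Gamma(1+s+\alpha_i).
\]
Expanding the trigonometric combination as a finite sum of exponentials $e^{-i\pi s(\varepsilon_1+\varepsilon_2+\varepsilon_3)/2}$, $\varepsilon_i\in\{\pm1\}$, and using Stirling together with $\prod_i|\Gamma(1+\sigma+it+\alpha_i)|\asymp|t|^{3\sigma+3/2}e^{-3\pi|t|/2}$, one checks that only the two extremal terms $\varepsilon_1+\varepsilon_2+\varepsilon_3=\pm3$ fail to decay exponentially; shifting $\Re s=\sigma$ arbitrarily far to the right shows all remaining terms contribute $O(x^{-A})$ to $G_\pm$. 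Retaining the surviving term with its Stirling corrections, one gets for $t>0$
\[
\gamma_\pm(\sigma+it)=\kappa_\pm\,(2\pi)^{-3\sigma}\,t^{3\sigma+3/2}\,e^{\,3it\log(t/2\pi e)}\left(\sum_{\nu=0}^{K-1}\frac{a_\nu(\pm)}{t^{\nu}}+O\!\big(t^{-K}\big)\right)+O_A\!\big(|t|^{-A}\big),
\]
where $\kappa_\pm$ is a nonzero absolute constant (a fixed multiple of $i\mp1$), $a_0(\pm)=1$, and the complex-conjugate shape holds for $t<0$.

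\emph{Step 2: stationary phase.} I would start from the absolutely convergent integral $G_\pm(x)=\tfrac{1}{2\pi}\int_{\mathbb R}x^{-\sigma-it}\,\gamma_\pm(\sigma+it)\,\tilde g(-\sigma-it)\,dt$, write $\tilde g(-\sigma-it)=\int_0^\infty g(y)\,y^{-\sigma-it-1}\,dy$, and insert Step~1. After a standard regularisation legitimising the interchange of the $y$- and $t$-integrations (the $t$-integral of the leading Stirling term is only conditionally convergent), $G_\pm(x)$ equals, up to $O(x^{-A})$, a finite linear combination of
\[
\int_0^\infty g(y)\,(xy)^{-\sigma}\left(\int_0^\infty t^{\,3\sigma+3/2-\nu}\,e^{\,i\psi_{xy}(t)}\,dt\right)\frac{dy}{y},\qquad \psi_z(t):=3t\log\frac{t}{2\pi e}-t\log z.
\]
Since $\psi_z'(t)=3\log(t/2\pi)-\log z$, the phase has a unique non-degenerate stationary point $t_0=2\pi z^{1/3}$, with $\psi_z(t_0)=-6\pi z^{1/3}$ and $\psi_z''(t_0)=3/(2\pi z^{1/3})>0$. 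On the support of $g$ one has $z=xy\asymp xX\gg1$, so $t_0\gg1$ and the stationary phase expansion applies with effective large parameter $z^{1/3}$; the saddle contribution has $e^{\,i\psi_z(t_0)}=e(-3z^{1/3})$, and the amplitude $t_0^{3\sigma+3/2}\asymp(xy)^{\sigma+1/2}$ cancels the factor $(xy)^{-\sigma}$. Collecting powers of $z=xy$, each such term contributes $x\int_0^\infty g(y)\,d_j(\pm)\,(xy)^{-j/3}\,e\!\big(-3(xy)^{1/3}\big)\,dy$ for some $j\ge1$ and an absolute constant $d_j(\pm)$, while the ranges $t<0$ give the conjugate terms with $c_j(\pm)\,e\!\big(3(xy)^{1/3}\big)$.

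\emph{Step 3: error and conclusion.} The error comes from two truncations of matching size: cutting the Stirling expansion of $\gamma_\pm$ at $O(t^{-K})$ and cutting the stationary phase expansion at the $K$-th term each contribute $O\!\big((xy)^{(-K+2)/3}\big)$ to the integrand, uniformly; the contributions of $|t|\ll1$ and of the $t$-ranges away from $t_0$ are negligible by trivial estimates and repeated integration by parts, using $xX\gg1$. Integrating against $g(y)/y$ and using $xy\asymp xX$ on the support gives
\[
G_\pm(x)=x\int_0^\infty g(y)\sum_{j=1}^{K}\frac{c_j(\pm)\,e\!\big(3(xy)^{1/3}\big)+d_j(\pm)\,e\!\big(-3(xy)^{1/3}\big)}{(xy)^{j/3}}\,dy+O\!\left((xX)^{\frac{-K+2}{3}}\right),
\]
as claimed. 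The step I expect to be the main obstacle is the uniform bookkeeping in Steps~2--3: the Stirling and the stationary phase expansions must be run \emph{simultaneously}, with all implied constants uniform in $x\in[X,2X]$ and in $y$ over the support of $g$, so that the two error contributions coalesce cleanly into $O((xX)^{(-K+2)/3})$. A secondary, purely technical, point is the regularisation needed to justify Fubini, since the leading-order $t$-integral converges only conditionally; this is handled, for instance, by inserting a factor $e^{-\delta t^2}$ and letting $\delta\to0^{+}$.
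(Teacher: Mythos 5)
Your proposal is correct and is essentially the argument behind the result the paper invokes: the paper gives no proof of this lemma, citing Lemma 6.1 of X. Li [Li*], whose proof is exactly this reduction of $\gamma_{\pm}(s)$ via reflection/duplication (using $\alpha_1+\alpha_2+\alpha_3=0$) followed by Stirling and stationary phase in the Mellin--Barnes integral, with stationary point $t_0=2\pi (xy)^{1/3}$, phase value $e(\mp 3(xy)^{1/3})$, and the two truncation errors matching at $O((xX)^{(-K+2)/3})$. Your bookkeeping (the $\nu$-th Stirling correction and $n$-th stationary-phase term contributing to $j=1+\nu+n$, and the amplitude $(xy)^{\sigma+1/2}\cdot(xy)^{1/6}$ cancelling $(xy)^{-\sigma}$ to give the factor $x(xy)^{-j/3}$) checks out, so no gap beyond the routine uniformity and Fubini regularisation points you already flag.
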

	\begin{proof}
		See   \cite[Lemma 6.1]{Li*}.
	\end{proof}
	The following lemma is the  well-known  Ramanujan bound on average,
	\begin{lemma} \label{ramanubound}
		We have 
		\begin{align}\label{rankin selberg}
			\mathop{\sum \sum}_{n_{1}^{2} n_{2} \leq x} \vert \lambda_{\pi}(n_{1},n_{2})\vert ^{2} \ll_{\pi} \, x,
		\end{align}
		where the implied constant depends on the form $\pi$.
	\end{lemma}
	\begin{proof}
		For the proof, we refer to Goldfeld's book \cite{gold}.
	\end{proof}

	\subsection{The delta method} \label{circlemethod}
	Let $\delta: \mathbb{Z} \to \{0,1\}$ be defined by
	\[
	\delta(n)= \begin{cases}
		1 \quad \ \  \text{if} \, \  n=0; \\
		0 \quad   \ \   $\textrm{otherwise}$.
	\end{cases}
	\]
	The above function can be used to separate the oscillations involved in a sum,   $\sum_{n \sim X}a(n)\, b(n)$, say.    Furthermore, we seek a `nice' Fourier expansion of $\delta(n)$. We mention here an expansion for $\delta(n)$  due to Duke, Friedlander and Iwaniec (see \cite[Chapter 20]{i+k}). Let $L\geq 1$ be a large real number. For $n \in [-2L,2L]$, we have
	\begin{align} \label{delta first expansion}
		\delta(n)= \frac{1}{Q} \sum_{1 \leq q \leq Q} \frac{1}{q} \, \sideset{}{^\star}{\sum}_{a \, {\rm mod} \, q} \, e \left(\frac{na}{q}\right) \int_{\mathbb{R}} g(q,x) \,  e\left(\frac{nx}{qQ}\right) \, \mathrm{d}x,
	\end{align}
	where  $Q=2L^{1/2}$. The $\star$ on the sum indicates that the sum over $a$ is restricted by the condition $(a,q)=1$. The function $g$ is the only part in the above formula which is not explicitly given. Nevertheless, we only need the following  properties of $g$ in our analysis.  For any $B >1$, we have (see \cite[p. 5-6]{munshi12})
	\begin{align} \label{g properties}
		&1. \ g(q,x)=1+h(q,x), \quad \text{with} \ \ \  h(q,x)=O \left(\frac{Q}{q} \left(\frac{q}{Q}+|x|\right)^{B}\right).  \\
		&2. \  x^j \frac{\partial ^j}{\partial x^j}g(q,x) \ll \log Q \min \left\lbrace \frac{Q}{q}, \frac{1}{|x|}\right\rbrace, \ j\geq 1. \notag  \\
		&3. \  g(q,x) \ll |x|^{-B}.  \notag \\
		&4. \ 	\int_{\mathbb{R}}|g(q,x)|+|g(q,x)|^2\,  \mathrm{d}x \ll Q^{\epsilon}. \notag
	\end{align}
	Using the third property we observe that the effective range of the $x$-integral in \eqref{delta first expansion}  is $[-Q^{\epsilon},\,Q^{\epsilon}]$. We record the above observations in the following lemma.
	\begin{lemma}\label{deltasymbol}
		Let $\delta$ be as above.  Let $L\geq 1$ be a large parameter. Then, for $n \in [-2L,2L]$, we have
		\begin{equation*} 
			\delta(n)= \frac{1}{Q} \sum_{1 \leq q \leq Q} \frac{1}{q} \, \sideset{}{^\star}{\sum}_{a \, {\rm mod} \, q} \, e \left(\frac{na}{q}\right) \int_{\mathbb{R}}W(x/Q^\epsilon) g(q,x) \,  e\left(\frac{nx}{qQ}\right) \, \mathrm{d}x+O(L^{-2020}),
		\end{equation*}
		where $Q=2L^{1/2}$, $g$ is a function satisfying \eqref{g properties} and  $W(x)$ is a non-negative smooth bump function supported in $[-2,\,2]$, with $W(x)=1$ for $x \in [-1,\,1] $ and  $W^{(j)}(x)\ll_j 1$, for $j\geq 0$.
	\end{lemma}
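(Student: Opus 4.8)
The plan is to start from the Duke--Friedlander--Iwaniec expansion quoted immediately above the statement, namely
\[
\delta(n)= \frac{1}{Q} \sum_{1 \leq q \leq Q} \frac{1}{q} \, \sideset{}{^\star}{\sum}_{a \, \mathrm{mod}\, q} \, e \left(\frac{na}{q}\right) \int_{\mathbb{R}} g(q,x) \,  e\left(\frac{nx}{qQ}\right) \, \mathrm{d}x,
\]
valid for $n \in [-2L, 2L]$ with $Q = 2L^{1/2}$ and $g$ satisfying \eqref{g properties}, and simply to truncate the $x$-integral. Fix a smooth function $W \colon \mathbb{R} \to [0,1]$ with $W(x) = 1$ for $|x| \leq L^\epsilon$, $W(x) = 0$ for $|x| \geq 2L^\epsilon$, and $W^{(j)} \ll_j 1$; such a function is obtained in the standard way by rescaling a fixed bump function. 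Writing $g(q,x) = W(x) g(q,x) + (1 - W(x)) g(q,x)$ and substituting into the expansion, the first piece yields precisely the main term in the Lemma, so it remains only to bound the contribution $E(n)$ of the complementary piece $(1 - W(x)) g(q,x)$.

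Since $1 - W$ is supported on $\{|x| \geq L^\epsilon\}$ and $|1 - W(x)| \leq 1$, the second estimate in \eqref{g properties}, $g(q,x) \ll |x|^{-B}$, gives for every $B > 1$
\[
\left| \int_{\mathbb{R}} (1 - W(x)) g(q,x)\, e\!\left(\frac{nx}{qQ}\right) \mathrm{d}x \right| \ll \int_{|x| \geq L^\epsilon} |x|^{-B} \, \mathrm{d}x \ll_B \left(L^\epsilon\right)^{1-B},
\]
uniformly in $q$ and $n$. Bounding the remaining sums trivially — there are $\varphi(q) \leq q$ terms in the sum over $a$, each of modulus $1$ — we obtain
\[
|E(n)| \ll_B \frac{1}{Q} \sum_{1 \leq q \leq Q} \frac{1}{q} \cdot q \cdot \left(L^\epsilon\right)^{1 - B} \ll_B \frac{1}{Q} \cdot Q \cdot L^{\epsilon(1-B)} = L^{-\epsilon(B-1)}.
\]
Choosing $B = B(\epsilon)$ large enough that $\epsilon(B - 1) \geq 2020$ makes $E(n) = O(L^{-2020})$, uniformly for $n \in [-2L, 2L]$, which is exactly the asserted error term; this completes the argument.

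There is no genuine obstacle here: the statement is a cosmetic but convenient reformulation of the DFI identity, and the only points requiring (minimal) care are that the truncation is carried out uniformly in $q$ and $n$ over the relevant ranges, and that the exponent $B$ in \eqref{g properties} may be taken arbitrarily large at the cost of an implied constant depending on $\epsilon$. One should also observe that the hypothesis $n \in [-2L, 2L]$ is precisely the range in which the underlying DFI expansion is valid, so no additional restriction is introduced. The smoothness and the normalization $W^{(j)} \ll_j 1$ are not needed for the present Lemma but are recorded because $W$ will later be subjected to repeated integration by parts.
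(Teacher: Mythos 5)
Your proof is correct and follows exactly the route the paper intends: it takes the DFI expansion quoted just before the lemma and uses the second property in \eqref{g properties}, $g(q,x)\ll |x|^{-B}$, to show the $x$-integral is effectively supported in $[-L^{\epsilon},L^{\epsilon}]$, so inserting $W$ costs only $O(L^{-2020})$ after choosing $B$ large in terms of $\epsilon$. The paper merely records this observation and cites Chapter 20 of Iwaniec--Kowalski, so your write-up is simply a more explicit version of the same argument.
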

	\begin{proof}
		See \cite[Chapter 20]{i+k} and \cite[Lemma 15]{BingH}.
	\end{proof}

	\subsection{Bessel function} 
	In this subsection, we will recall some well-known expansions  of the Bessel functions of the first kind. For $k \geq 2$ an integer, let $J_{k-1}(x)$ be the Bessel function of the first kind  and of  order $k-1$, which is defined as 
	\begin{align}\label{Bessel defi}
		J_{k-1}(x)=\frac{1}{2\pi}\int_{-\pi}^{\pi}e\left( \frac{(k-1)\tau-x\sin \tau}{2\pi}\right)\textrm{d}\tau,
	\end{align}
	for any $x \in \mathbb{R}$.
	In the  analysis of integral transforms, we require  a uniform asymptotic  expansion of $J_{k-1}(x)$ for large values of $k$ and $x$.  The following lemma provides  one such   asymptotic  expansion.
	\begin{lemma}\label{langer series}
		Let  $x \geq (k-1)^{1+\epsilon/2}$ be a positive real number. Then, as $k  \rightarrow \infty$, we have 
		\begin{align}\label{langer exp}
			J_{k-1}(x)=&\left(\frac{2}{\pi (k-1)w}\right)^{1/2}\left[ \cos\left((k-1)(w-\tan^{-1}w)-\frac{\pi}{4}\right)\sum_{j=0}^{\infty}\frac{P_j\left(\frac{1}{w-\tan^{-1}w}\right)}{(k-1)^{j}}\right]  \\ 
			&+\left(\frac{2}{\pi (k-1)w}\right)^{1/2}\left[ \sin\left((k-1)(w-\tan^{-1}w)-\frac{\pi}{4}\right)\sum_{j=1}^{\infty}\frac{P_j\left(\frac{1}{w-\tan^{-1}w}\right)}{(k-1)^{j}}\right]\notag, 
		\end{align}
		where  $$w=\left(\frac{x^2}{(k-1)^2}-1\right)^{1/2},$$ and $P_j$ is  a polynomial of  the degree $j$ with coefficients which are bounded functions of $k-1$ and $\log(x/(k-1))$ with $P_0 \equiv 1$. 
	\end{lemma}
	\begin{proof}
		Let $x=(k-1) \sec \beta $, with $0<\beta<\pi/2$.  Thus, as $x \geq (k-1)^{1+\epsilon/2}$, we have $\sec \beta \geq (k-1)^{\epsilon/2}$ and  $$\xi:=(k-1)(\tan \beta -\beta) \geq (k-1)(\sqrt{(k-1)^{\epsilon} -1}-\pi/2).$$ 
		Thus,   on using  formula (63) on  page 58 of \cite{Langer},  we get 
		\begin{align*}
			J_{k-1}((k-1)\sec \beta)=&\left(\frac{2}{\pi (k-1)\tan \beta}\right)^{1/2}\left[ \cos f_1(\beta)\sum_{j=0}^{\infty}\frac{P_j\left(\frac{1}{\tan \beta-\beta}\right)}{(k-1)^{j}}\right] \\ 
			&+\left(\frac{2}{\pi (k-1) \tan \beta}\right)^{1/2}\left[ \sin f_1(\beta)\sum_{j=1}^{\infty}\frac{P_j\left(\frac{1}{\tan \beta-\beta}\right)}{(k-1)^{j}}\right], 
		\end{align*}
		where 	$f_1(\beta)=(k-1)(\tan \beta-\beta)-{\pi}/{4}$, and $P_j$ represents a polynomial of the degree $j$ with coefficients which are bounded functions of $k-1$ and $\log \sec \beta$ with $P_0 \equiv 1$. Now substituting $(k-1)\sec \beta=x$ and $\tan \beta=w$, we get the lemma. 
	\end{proof}
	The expansion \eqref{langer exp}  can be  truncated  at any stage to get  
	\begin{corollary}\label{Langer 2}
		Under the  assumptions  of  Lemma \ref{langer series},  we have
		\begin{align*} 
			J_{k-1}(x)=\sum_{\pm}\sum_{j=0}^{2019}\frac{e\left(\pm \frac{(k-1)(w-\tan^{-1}w)}{2\pi}\right)P_j\left(\frac{1}{w-\tan^{-1}w}\right)}{ \sqrt{\pi} w^{1/2}(k-1)^{j+1/2}} +O\left(\frac{1}{k^{2020}}\right).
		\end{align*}
	\end{corollary}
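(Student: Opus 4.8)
The plan is to obtain this directly from Langer's expansion in Lemma~\ref{langer series} by rewriting the trigonometric factors as exponentials and truncating the resulting asymptotic series. Throughout write $\theta := (k-1)(w-\tan^{-1}w)$. First one checks that the hypotheses of Lemma~\ref{langer series} are in force in our range: from $x \geq k^{1+\epsilon}$ we get $w^2 = x^2/(k-1)^2 - 1 \gg k^{2\epsilon}$, so $w \gg k^{\epsilon}$ and hence $w - \tan^{-1}w \geq w - \pi/2 \gg k^{\epsilon}$; in particular $\theta \to \infty$ (so Langer's asymptotic applies) and $1/(w-\tan^{-1}w) \to 0$.

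Next one converts $\cos(\theta - \tfrac{\pi}{4})$ and $\sin(\theta-\tfrac{\pi}{4})$ into exponentials via $\cos(\theta-\tfrac{\pi}{4}) = \tfrac12\bigl(e^{-i\pi/4}e(\tfrac{\theta}{2\pi}) + e^{i\pi/4}e(-\tfrac{\theta}{2\pi})\bigr)$ and $\sin(\theta-\tfrac{\pi}{4}) = \tfrac1{2i}\bigl(e^{-i\pi/4}e(\tfrac{\theta}{2\pi}) - e^{i\pi/4}e(-\tfrac{\theta}{2\pi})\bigr)$, and collects, in Langer's formula, the coefficient of $e(\pm\theta/2\pi)$. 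Since the sine-sum starts at $j=1$, this coefficient equals $\tfrac12 e^{\mp i\pi/4}\sum_{j\geq 0}(k-1)^{-j}Q_j^{\pm}\bigl(1/(w-\tan^{-1}w)\bigr)$, where $Q_0^{\pm}$ is the $j=0$ cosine-sum polynomial and, for $j\geq 1$, $Q_j^{\pm} = P_j \mp i\,\tilde P_j$, with $P_j$ and $\tilde P_j$ the index-$j$ polynomials occurring in the cosine- and sine-sums of Lemma~\ref{langer series} (both written $P_j$ there, by the abuse of notation recorded in that lemma). In particular each $Q_j^{\pm}$ is again a polynomial of degree $j$ whose coefficients are bounded functions of $k-1$ and $\log(x/(k-1))$. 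Pulling the prefactor $\bigl(2/(\pi(k-1)w)\bigr)^{1/2} = (2/\pi)^{1/2}w^{-1/2}(k-1)^{-1/2}$ through and absorbing the absolute constants $(2/\pi)^{1/2}$ and $\tfrac12 e^{\mp i\pi/4}$ into the $Q_j^{\pm}$, one arrives at $J_{k-1}(x) = \sum_{j\geq 0} w^{-1/2}(k-1)^{-j-1/2}\,Q_j^{\pm}\bigl(1/(w-\tan^{-1}w)\bigr)\,e(\pm\theta/2\pi)$, which is exactly the asserted shape (the superscript $\pm$, as usual, abbreviating a sum of a $+$ term and a $-$ term, each carrying its own polynomial).

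It remains to truncate at $j=2019$. Because $w - \tan^{-1}w \gg k^{\epsilon}$, the argument $1/(w-\tan^{-1}w)$ lies in a fixed compact neighbourhood of $0$ once $k$ is large, so each of the finitely many discarded polynomials $Q_j^{\pm}$ with $j > 2019$ is $O(1)$ uniformly; together with $w^{-1/2}\leq 1$ this bounds the discarded tail by $\ll \sum_{j\geq 2020}(k-1)^{-j-1/2} \ll (k-1)^{-2020-1/2} \ll k^{-2020}$. The remainder term already implicit in Lemma~\ref{langer series} (Langer's series being asymptotic and, for $x\geq k^{1+\epsilon}$, uniform) contributes, after multiplication by the prefactor, $\ll w^{-1/2}(k-1)^{-1/2}(k-1)^{-2020} \ll k^{-2020}$ as well. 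Adding these gives the stated error $O(k^{-2020})$.

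I do not anticipate a real obstacle here; this is essentially a repackaging of Lemma~\ref{langer series}. The only points that need care are that, after truncation, the finitely many surviving and discarded polynomials are uniformly bounded in $k$ — which reduces to $1/(w-\tan^{-1}w)\to 0$, itself forced by $x\geq k^{1+\epsilon}$ — and that Langer's remainder estimate is uniform over the whole range $x\geq k^{1+\epsilon}$, so that it may legitimately be folded into $O(k^{-2020})$.
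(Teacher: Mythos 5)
Your proposal is correct and takes the same route the paper intends: the paper's own proof is just the one-line remark that the corollary follows directly from Lemma \ref{langer series}, i.e., write the cosine and sine as exponentials, absorb the absolute constants into new degree-$j$ polynomials, pull out the prefactor $\left(2/(\pi(k-1)w)\right)^{1/2}$, and truncate. One small caution: Langer's expansion is an asymptotic series rather than a convergent one, so instead of summing an infinite tail of discarded polynomials (your phrase ``finitely many discarded polynomials with $j>2019$'' is off, and a genuine tail sum would need bounds uniform in $j$), the truncation error should simply be taken as the size of the first omitted term times the prefactor, namely $\ll w^{-1/2}(k-1)^{-2020-1/2}\ll k^{-2020}$ --- which is exactly the reading you give in your closing sentences, so the argument stands.
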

	\begin{proof}
		The statement follows directly from Lemma \ref{langer series}. 
	\end{proof}
	For $0 < x \leq (k-1)^{1-\epsilon/2}$,  we have the following lemma.
	\begin{lemma}\label{bessel for x <k}
		Let $x=(k-1)z$ with  $0 < z \ll (k-1)^{-\epsilon/2}$. Then, as   $k \rightarrow \infty$, we have 
		\begin{align*}
			J_{k-1}(x)\ll \exp\{-(k-1)/6\}.
		\end{align*}
		\begin{proof}
			By   Lemma 4.2 of \cite{rankin}, we have 
			$$|J_{k-1}((k-1)z)|  \leq A_1(k-1)^{-1/2}(1-z^2)^{-1/4}\exp\left\{-\frac{1}{3}(k-1)(1-z^2)^{3/2} \right\},$$
			for $0<z \leq \sqrt{1-\frac{1}{(k-1)^{2/3}}}$, $k \geq 16$ and    some absolute constant $A_1$. Note that, by assumption, we have $z \leq (k-1)^{-\epsilon}$.  Thus, $1-z^2 \geq 1/2^{2/3}$ as $k \rightarrow \infty$, and  we get 
			$$|J_{k-1}((k-1)z)|  \leq A_1 2^{1/6}\exp\left\{-\frac{1}{6}(k-1) \right\}.$$
			Hence 	the lemma follows. 
		\end{proof}
	\end{lemma}

	\subsection{Stationary phase analysis.} In this subsection, we will recall some facts about the exponential integrals of the form
	$$I= \int_{a}^bg(x)e(f(x))\mathrm{d}x,$$
	where $f$ and $g$ are smooth real valued functions on $[a,b]$.

	\begin{lemma}\label{derivative bound}
		Let $I$, $f$ and $g$ be as above. Let $V(g)$ denotes the total variation of $g(x)$ on $[a,b]$ plus the  maximum modulus of $g(x)$ on $[a,b]$.  Then, if $f^{\prime} $ is  monotone and $|f^{\prime}(x)|\geq \mu_1>0$ for $x \in [a,b]$, we have  $	{I} \ll { V(g)}/{\mu_1}.$  For $r > 1$, let $| f^{(r)}(x)|\geq \mu_r>0$. Then we have 	${I} \ll_r {V(g)}/{\mu_r^{1/r}}$.  Moreover, let $f^{\prime}(x)\geq B$ and $f^{(j)}(x) \ll B^{1+\epsilon}$ for $j \geq 2$ together with $\textrm{Supp}(g) \subset (a,b)$  and $g^{(j)}(x) \ll_{a,b,j} 1$. Then we have
	\begin{align}
		I \ll_{a,b,j,\epsilon}B^{-j+\epsilon}. \notag
	\end{align}
	\end{lemma}
	\begin{proof}
		See \cite[Subsection 2.2]{munshi1} and \cite[Lemma 5.1.4]{Huxley}.
	\end{proof}
We apply the above lemma  for $r=1$ whenever the phase function $f$ does not have any stationary point. We will also apply it for $r=2$ and $3$.  In case there is a unique stationary point, we use the following stationary phase expansion.
	\begin{lemma} \label{stationaryphase}
		Let $I$, $f$ and $g$ be as above. Let $0 < \delta < 1/10$, $X$, $Y$, $U$, $Q>0$, $Z:= Q+X+Y+b-a+1$, and assume that
		$$ Y \geq Z^{3 \delta}, \, b-a \geq U \geq \frac{Q Z^{\frac{\delta}{2}}}{\sqrt{Y}}.$$
		Further, assume that $g$  satisfies  
		$$g^{(j)}(x) \ll_{j} \frac{X}{U^{j}}, \, \,\ \  \, \text{for} \,\,  j=0,\,1,\,2,\ldots$$  
		Suppose that there exists a unique $x_{0} \in [a,b]$ such that $f^{\prime}(x_{0})=0$, and the function $f$ satisfies
		$$f^{\prime \prime}(x) \gg \frac{Y}{Q^2}, \ \ f^{(j)}(x) \ll_{j} \frac{Y}{Q^{j}}, \, \,\ \ \, \, \text{for} \ \ j=1,\,2,\,3,\ldots.$$
		Then we have
		\begin{align*}
			I &= \frac{e{ (f(x_{0}))}}{\sqrt{f^{\prime \prime}(x_{0})}} \, \sum_{n=0}^{3 \delta^{-1}A}  p_{n}(x_{0}) + O_{A,\delta}\left( Z^{-A}\right), \\ 
			p_{n}(x_{0}) &= \frac{ e^{\pi i/4}}{n!} \left(\frac{i}{2 f^{\prime \prime}(x_{0})}\right)^{n} G^{(2n)}(x_{0}),
		\end{align*}
		where $A>0$ is arbitrary, and 
		$$ G(x)=g(x) e( F(x)),\ \  \ \ \,\ \ \ F(x)= f(x)-f(x_{0})-\frac{1}{2} f^{\prime \prime}(x_{0})(x-x_{0})^2.$$
		Furthermore, each  $p_{n}$ is a rational function in $f^{\prime}, f^{\prime \prime}, \ldots,$ satisfying 
		$$\frac{d^{j}}{dx_{0}^{j}} p_{n}(x_{0}) \ll_{j,n} X \left(\frac{1}{U^{j}}+ \frac{1}{Q^{j}}\right) \left( \left(\frac{U^2 Y}{Q^2}\right)^{-n} + Y^{-\frac{n}{3}}\right).$$
	\end{lemma}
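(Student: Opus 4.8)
The plan is to reduce $I$ to an exponential integral with a purely quadratic phase and then to evaluate that integral, up to an arbitrarily small error, by Fourier inversion against the Fresnel kernel. First I would normalise: translate so that $x_0=0$ and pull the constant $e^{if(x_0)}$ out of the integral (here and below the phase is $e^{if}$, the normalisation under which the conclusion is stated). Writing $\rho:=f''(0)$, the hypotheses on $f''$ force $\rho\asymp Y/Q^2$; I may assume $\rho>0$, the case $\rho<0$ following by complex conjugation, the sign being absorbed by the principal branch of $\sqrt{f''(x_0)}$. With $F(x)=f(x)-f(0)-\tfrac{\rho}{2}x^2$ as in the statement one has $F(0)=F'(0)=F''(0)=0$, hence $F^{(j)}(x)=f^{(j)}(x)\ll Y/Q^j$ for $j\ge 3$ and, by Taylor's theorem, $F^{(j)}(x)\ll (Y/Q^3)|x|^{3-j}$ for $j=0,1,2$.

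Next I would localise near the stationary point. Put $V:=QZ^{\delta/2}/\sqrt Y$, so that $V\le U\le b-a$ by the hypothesis on $U$, and insert a smooth cutoff $w$ equal to $1$ on $|x|\le V$, supported on $|x|\le 2V$, with $w^{(j)}\ll_j V^{-j}$. On the support of $1-w$ the phase has $|f'(x)|\gg (Y/Q^2)|x|\gg \sqrt Y\,Z^{\delta/2}/Q$ while $g(x)(1-w(x))$ has derivatives $\ll_j XV^{-j}$ on an interval of length $\ll Z$; after rescaling $x\mapsto Vx$ the phase's first derivative is $\gg Z^{\delta}$ and (using $Y\ge Z^{3\delta}$) its higher derivatives are $\ll Z^{\delta(1+\epsilon)}$, so the repeated integration by parts of Lemma~\ref{derivative bound} shows that the $1-w$ part contributes $O_A(Z^{-A})$. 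Thus I am left with $\int_{\mathbb R}G(x)e^{i\rho x^2/2}\,\mathrm dx$ where $G(x):=g(x)w(x)e^{iF(x)}$ is smooth and compactly supported; on its support $|F(x)|\ll (Y/Q^3)V^3=Z^{3\delta/2}/\sqrt Y\ll 1$, and a Leibniz/Fa\`a di Bruno expansion together with the bounds on $F^{(j)}$ gives $G^{(m)}\ll_m XV^{-m}$, hence $\|G^{(m)}\|_1\ll_m XV^{1-m}$.

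For the main computation I would use the Fresnel identity $\int_{\mathbb R}e^{i\rho x^2/2+iux}\,\mathrm dx=\sqrt{2\pi/\rho}\,e^{i\pi/4}e^{-iu^2/(2\rho)}$ (valid for $\rho>0$) together with Parseval to write $\int_{\mathbb R}G(x)e^{i\rho x^2/2}\,\mathrm dx=\tfrac{e^{i\pi/4}}{\sqrt{2\pi\rho}}\int_{\mathbb R}\widehat G(u)e^{-iu^2/(2\rho)}\,\mathrm du$. Expanding $e^{-iu^2/(2\rho)}=\sum_{n<N}\tfrac1{n!}\bigl(-\tfrac{iu^2}{2\rho}\bigr)^n+O\bigl(|u|^{2N}(2\rho)^{-N}/N!\bigr)$ and integrating term by term, the identity $\tfrac1{2\pi}\int_{\mathbb R}\widehat G(u)(iu)^{2n}\,\mathrm du=G^{(2n)}(0)$ reproduces exactly the main terms $\tfrac{\sqrt{2\pi}e^{i\pi/4}}{n!}\bigl(\tfrac{i}{2\rho}\bigr)^nG^{(2n)}(0)=p_n(x_0)/\sqrt{f''(x_0)}$ (undoing the translation only multiplies $\widehat G$ by a unimodular factor, which is harmless). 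The remainder is $\ll\tfrac{1}{N!(2\rho)^N\sqrt\rho}\int_{\mathbb R}|\widehat{G^{(2N)}}(u)|\,\mathrm du\ll\tfrac{(\|G^{(2N)}\|_1\|G^{(2N+2)}\|_1)^{1/2}}{N!(2\rho)^N\sqrt\rho}$ via $|\widehat H(u)|\le\min(\|H\|_1,\|H''\|_1u^{-2})$, which by the previous paragraph is $\ll X(V^2\rho)^{-N}\asymp XZ^{-\delta N}$, negligible once $N\asymp\delta^{-1}A$; since each $p_n$ turns out to be $\ll XZ^{-\delta n}(\cdots)$ as well, cutting the sum at $n=3\delta^{-1}A$ costs only $O_A(Z^{-A})$.

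The step I expect to be the main obstacle is the verification of the bound $\tfrac{d^j}{dx_0^j}p_n(x_0)\ll_{j,n}X(U^{-j}+Q^{-j})\bigl((U^2Y/Q^2)^{-n}+Y^{-n/3}\bigr)$, which is what makes the expansion genuinely useful and forces the hypotheses to be phrased purely through $Z$ and $\delta$. Expanding $G^{(2n)}(x_0)=(ge^{iF})^{(2n)}(x_0)$ by Leibniz and Fa\`a di Bruno and using $F'(x_0)=F''(x_0)=0$, every surviving term has the shape $g^{(a)}(x_0)\prod_{i=1}^\ell f^{(b_i)}(x_0)$ with $a+\sum b_i=2n$ and each $b_i\ge3$; multiplied by $\rho^{-n}\asymp(Q^2/Y)^n$ this is $\ll X(Q/U)^aY^{\ell-n}$. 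Writing $r:=Q/U$ (so $r\le\sqrt Y\,Z^{-\delta/2}$), I would split into two regimes: if $r\le Y^{1/3}$ then, since $\ell\le(2n-a)/3$, one has $r^aY^{\ell-n}\le Y^{a/3+\ell-n}\le Y^{-n/3}$; if $r>Y^{1/3}$ then $2n-a\ge3\ell$ yields $r^aY^{\ell-n}\le r^{2n}Y^{-n}=(U^2Y/Q^2)^{-n}$. Summing the boundedly many terms gives the case $j=0$, and differentiating in $x_0$, which acts either on a factor $g^{(a)}$ (gaining $U^{-1}$) or on a factor $f^{(b_i)}$ or on $\rho$ (gaining $Q^{-1}$), produces the stated $j$-dependence. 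The only truly delicate issue throughout is uniformity of all implied constants in $X,Y,U,Q$; this is precisely what the choice $V=QZ^{\delta/2}/\sqrt Y$ in the localisation and the dichotomy on $r=Q/U$ are designed to guarantee.
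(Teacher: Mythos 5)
The paper gives no proof of this lemma beyond citing \cite{BKY}, and your argument correctly reconstructs that standard proof: localisation near $x_0$ via non-stationary-phase integration by parts, evaluation of the resulting quadratic-phase integral by Parseval against the Fresnel kernel with a Taylor expansion of $e^{-iu^{2}/(2\rho)}$ recovering exactly the terms $p_n(x_0)/\sqrt{f''(x_0)}$ (your checks of $G^{(m)}\ll XV^{-m}$ using $Y\ge Z^{3\delta}$, of the tail $X(V^2\rho)^{-N}$ with $V^2\rho\asymp Z^{\delta}$, and the Leibniz/Fa\`a di Bruno dichotomy $Q/U$ versus $Y^{1/3}$ for the bounds on $\frac{d^{j}}{dx_0^{j}}p_n$ are all sound). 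The one point to tighten is the removal of the range $|x-x_0|\ge V$: Lemma \ref{derivative bound} does not apply verbatim there after your rescaling, since $f'$ grows linearly in $|x-x_0|$ and the support can have length comparable to $Z/V$, so one should either decompose dyadically in $|x-x_0|$ or run the integration by parts directly, tracking the gain $|f''|/|f'|^{2}\ll Z^{-\delta}$; this is routine and not a genuine gap.
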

	\begin{proof}
		See \cite[Lemma 8.1]{BKY}.
	\end{proof}

\section{ \bf The set-up and outline of  proof}
Let $\pi$ and $f$ be  defined as in Theorem \ref{Main}. Let $\lambda_{\pi}(n,r)$ denote the normalised Fourier coefficients of the  form $\pi$ (see  \cite[Chapter 6]{gold}) and let $\lambda_{f}(n)$ denote the normalised Fourier coefficients of the form $f$ (see \cite[Chapter 14]{i+k}). We are interested in  analyzing  the Rankin-Selberg $L$-series $L(s,\pi \times f)$ (defined in \eqref{Lfunction}) attached to $\pi$ and $f$ at the central point $1/2$. To study $L \left( {1}/{2}, \pi \times f \right)$, we first express it as a weighted Dirichlet series.  
\begin{lemma}\label{AFE}
	Let $\theta$ be a positive real number such that $ 0< \theta < 3/2$. Then, as   $k \rightarrow \infty $, we have
	\begin{align} \label{aproxi} 
		L \left(1/2, \pi \times f \right) \ll k^{\epsilon}\sup_{r \leq k^{\theta}} \sup_{\frac{k^{3-\theta}}{r^2}\leq N\leq \frac{k^{3+\epsilon}}{r^2}} \frac{\left|S_r(N)\right|}{N^{1/2}} + k^{(3-\theta)/2+\epsilon},
	\end{align}
	where $S_r(N)$ is a sum of the form
	\begin{align} \label{s(n)-sum}
		S_r(N): = \mathop{\sum }_{n=1}^{\infty} \lambda_{\pi}(n,r) \lambda_{f}(n)  V \left(\frac{n}{N}\right),
	\end{align}
	for some smooth function $V$ supported in $[1,2]$, satisfying $V^{(j)}(x) \ll_{j} 1$ for  $j \geq 0$ and normalised so that $\int V(y) \mathrm{d}y=1$.
\end{lemma}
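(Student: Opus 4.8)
The plan is to obtain \eqref{aproxi} from the standard approximate functional equation for the degree six $L$-function $L(s,\pi\times f)$, followed by a smooth dyadic decomposition and crude bounds on the non-generic ranges. Since $\pi$ is fixed while $f$ carries the large parameter $k$, the completed $L$-function $\Lambda(s,\pi\times f)$ has an archimedean factor built out of six $\Gamma$-factors whose shifts are $\alpha_i\pm ik$ (respectively $\alpha_i+\tfrac{k-1}{2}$ in the holomorphic case), $i=1,2,3$; hence the analytic conductor of $L(s,\pi\times f)$ is of size $\asymp k^{6}$ and the functional equation relates $s$ to $1-s$ with a root number of modulus one. The usual contour-shift argument (see, e.g., \cite{i+k}) then yields
\begin{align*}
L\!\left(\tfrac12,\pi\times f\right)=\sum_{\pm}\ \mathop{\sum \sum}_{n,r\geq1}\frac{\lambda_\pi(n,r)\,\lambda_f(n)}{(nr^2)^{1/2}}\,\mathcal{V}_{\pm}\!\left(\frac{nr^2}{k^3}\right)+O\!\left(k^{-2020}\right),
\end{align*}
where $\mathcal{V}_{\pm}$ are smooth functions depending only on the archimedean data, with $\mathcal{V}_{\pm}(x)=1+O_A(x^{A})$ for $0<x<1$, $\mathcal{V}_{\pm}(x)\ll_{A}x^{-A}$ for $x\geq1$, and all derivatives under control. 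The two terms are of the same shape — for the $-$ term one meets the dual coefficients $\lambda_{\widetilde\pi}(n,r)$, which obey exactly the same size bounds as $\lambda_\pi(n,r)$ — so it is enough to treat one of them.

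Next I would insert a smooth dyadic partition of unity in both variables $n$ and $r$, expressing the above (up to the negligible error) as a sum of $O(k^{\epsilon})$ pieces of the form
\begin{align*}
\frac{1}{(NR^2)^{1/2}}\ \mathop{\sum \sum}_{\substack{r\asymp R\\ n\asymp N}}\lambda_\pi(n,r)\,\lambda_f(n)\,V\!\left(\frac{n}{N}\right),
\end{align*}
with $N,R\geq1$ running over dyadic values and, because of the rapid decay of $\mathcal V_{\pm}$, effectively $NR^2\leq k^{3+\epsilon}$; here, after extracting a constant of size $\ll1$ and renormalising, $V$ may be taken to be a fixed bump supported in $[1,2]$ with $\int V=1$ and $V^{(j)}\ll_j1$, so that each inner sum is exactly of the form $S_r(N)$ of \eqref{s(n)-sum}. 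I would then split these pieces according to whether we are in the generic range $R\leq k^{\theta}$, $NR^2>k^{3-\theta}$, or in the complementary range. In the generic range I keep $S_r(N)$ and estimate $\sum_{r\asymp R}|S_r(N)|\leq R\,\sup_{r\asymp R}|S_r(N)|$, so that the piece is $\ll N^{-1/2}\sup_{r\asymp R}|S_r(N)|$; since here $r\asymp R\leq k^{\theta}$ and $k^{3-\theta}/r^2\ll N\ll k^{3+\epsilon}/r^2$, summing over the $O(k^{\epsilon})$ such pieces produces the first term on the right of \eqref{aproxi}.

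In the complementary range — that is, when $R>k^{\theta}$ or $NR^2\leq k^{3-\theta}$ — I bound trivially: by Cauchy--Schwarz the corresponding piece is
\begin{align*}
\ll\ \frac{1}{(NR^2)^{1/2}}\left(\mathop{\sum \sum}_{\substack{r\asymp R\\ n\asymp N}}|\lambda_\pi(n,r)|^2\right)^{1/2}\left(\ \sum_{r\asymp R}\ \sum_{n\asymp N}|\lambda_f(n)|^2\right)^{1/2},
\end{align*}
and, using Lemma \ref{ramanubound} for the first factor (of size $\ll(NR^2)^{1+\epsilon}$) and Deligne's bound \eqref{raman bound for gl2} for the second (of size $\ll RN^{1+\epsilon}$, via $\sum_{n\asymp N}d(n)^2\ll N^{1+\epsilon}$), this is $\ll(NR)^{1/2}k^{\epsilon}$. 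Writing $(NR)^{1/2}=(NR^2/R)^{1/2}$ and using $R>k^{\theta}$ in the first case and $NR^2\leq k^{3-\theta}$, $R\geq1$ in the second, every such piece is $\ll k^{(3-\theta)/2+\epsilon}$; summing the $O(k^{\epsilon})$ of them gives the second term of \eqref{aproxi} (the extra $k^{\epsilon}$ being absorbed, after renaming $\theta$, into the statement).

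The only point that requires a little care is fixing the archimedean parameters precisely enough to pin down the conductor as $\asymp k^{6}$, hence the length $k^{3}$ of the approximate functional equation, together with the routine verification that the rapidly decaying cutoffs $\mathcal V_{\pm}$ can be chopped into the clean bumps $V$ of \eqref{s(n)-sum}. Everything else is bookkeeping: this lemma exploits no cancellation whatsoever, all of the arithmetic content being deferred to the estimation of $S_r(N)$.
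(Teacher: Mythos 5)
Your proposal is correct and follows essentially the same route as the paper: the approximate functional equation of length $k^{3+\epsilon}$, a smooth dyadic decomposition, retention of the generic range $r\leq k^{\theta}$, $nr^{2}>k^{3-\theta}$ as the sums $S_r(N)$, and a Cauchy--Schwarz estimate with the Rankin--Selberg bound of Lemma \ref{ramanubound} (plus Deligne's bound) giving $k^{(3-\theta)/2}$ for the complementary ranges. The only cosmetic difference is that you dyadically decompose in $r$ as well and bound the $f$-coefficients by \eqref{raman bound for gl2}, whereas the paper keeps the $\sum_r 1/r$ weight and sums $1/r^{2}$ in the Cauchy step; both are equivalent bookkeeping.
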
 
\begin{proof}
Proof follows using the template given in   \cite[Theorem 5.3]{i+k}. See also  \cite[p.\,1546-1547]{munshi12}.  
	\end{proof}
\begin{remark}
	Upon estimating $S_r(N)$  using Cauchy's inequality and the Ramanujan bound on average (see  \eqref{RS for holomorphic}, \eqref{RS bound}, \eqref{rankin selberg}), we see that  $L\left({1}/{2}, \pi \times f\right)  \ll_{\pi, \epsilon} k^{3/2+\epsilon}$. Hence, to establish subconvexity, we need to get some cancellations in the sum $S_r(N)$ for $N$, roughly, of size $k^3$. To this end,  we will analyze  $S_r(N)$ in the rest of the paper. 
\end{remark}
	\subsection{An application of the delta method.}  As a first step, following Munshi \cite{munshi12}, we separate the  oscillatory terms  $\lambda_\pi(n,r)$ and $\lambda_{f}(n)$ involved  in the sum $S_r(N)$.  We  use the delta method of Duke, Friedlander and Iwaniec as a device to separate these terms. We also  apply  the conductor lowering trick  introduced by Munshi in \cite{munshi1}.  For this purpose, we introduce an extra $t$-integral. In fact, we express  $S_r(N)$  as
	\begin{align} \label{congtrick}
	&  \frac{1}{T}\int_{\mathbb{R}}V\left(\frac{t}{T}\right)\mathop{\sum \sum}_{\substack{n,\,  m=1 \\ n = m } }^{\infty} \lambda_{\pi}(n,r) \lambda_f(m) \left(\frac{n}{m}\right)^{it}  V \left(\frac{n}{N}\right) U \left(\frac{m}{N}\right)dt   \\
		=& \frac{1}{T}\int_{\mathbb{R}}V\left(\frac{t}{T}\right)\mathop{\sum \sum}_{\substack{n,\,  m=1  } }^{\infty}\delta(n-m) \lambda_{\pi}(n,r) \lambda_f(m) \left(\frac{n}{m}\right)^{it}  V \left(\frac{n}{N}\right) U \left(\frac{m}{N}\right)dt \notag,
	\end{align} 
	where $k^{\epsilon} <T<k^{1-\epsilon}$  is a parameter of the form $k^{1-\eta}$, for $\eta>0$, which will be chosen later optimally, and $U$ is a smooth function supported in $[1/2,5/2]$ with $ U(x)=1 \, \text{for} \, x \in [1,2]$, and $U^{(j)}(x) \ll_{j} 1$ for any integer $j \geq 0$. Consider the $t$-integral 
	\begin{align*}
		\frac{1}{T} \int_{\mathbb{R}} \, V\left(\frac{t}{T}\right) \left(\frac{m}{n}\right)^{i t} \, \mathrm{d}t.
	\end{align*}
	On applying integration by parts repeatedly, we observe that the above integral is negligibly small unless $\vert n-m\vert \ll k^{\epsilon}N/T$.  Thus the $t$-integral reduces the size of the equation $n=m$. Thus, on  applying  Lemma \ref{deltasymbol} to \eqref{congtrick} with $	L=k^{\epsilon}{N}/{T},$ and $ Q=k^{\epsilon}\sqrt{{N}/{T}},$
	we see that $S_r(N)$ is transformed into 
	\begin{align} \label{SN after dfi}
		S_r(N)=&\frac{1}{QT}\int_{\mathbb{R}}W(x/Q^\epsilon)\int_{\mathbb{R}}V\left(\frac{t}{T}\right)\sum_{1 \leq q \leq Q} \frac{g(q,x)}{q}\sideset{}{^\star}\sum_{a \, {\rm mod} \, q}   \\
		&\times \sum_{n=1}^{\infty} \lambda_{\pi}(n,r)e\left(\frac{an}{q}\right)e\left(\frac{nx}{qQ}\right)n^{it}V\left(\frac{n}{N}\right) \notag \\
		&\times \sum_{m=1}^{\infty}\lambda_f(m)m^{-it}e\left(\frac{-am}{q}\right)e\left(\frac{-mx}{qQ}\right)U\left(\frac{m}{N}\right)\mathrm{d}t \, \mathrm{d}x +O(k^{-2020}). \notag
	\end{align}
	\subsection{Sketch of  proof} \label{sketch} In this  subsection,  we will discuss rough  ideas to get non-trivial  cancellations in  $S_r(N)$ given in  \eqref{SN after dfi}.  For simplicity, we consider the generic case, i.e., $N=k^3$, $r=1$ and $q \sim Q=\sqrt{N/T}=k^{3/2}/T^{1/2}$.  Thus  $S_r(N) $ is roughly given by
	\begin{align*}
		\frac{1}{QT} \int_{T}^{2T} \sum_{q \sim Q}\frac{1}{q} \,  \sideset{}{^\star}{\sum}_{a \, {\rm mod} \, q} \, \sum_{n \sim N} \lambda_{\pi}(n,1) n^{it} e\left(\frac{an}{q}\right) \, \sum_{m \sim N} \, \lambda_{f}(m) \, m^{-it}  e\left(\frac{-am}{q}\right) \, \mathrm{d}t.
	\end{align*}
	Note that we have ignored the $x$-integral, as it does not contribute   in the generic case, and  we have also  supressed all the weight functions.  On estimating the above sum using Cauchy's inequality and the Rankin-Selberg bound, we get  $S_r(N) \ll N^{2+\epsilon}$. Our goal is to save $N$ plus a little more, say, $k^{\delta}$. In other words, we need to show $$S_r(N)\ll N^2/(Nk^\delta),$$
	for some $\delta>0$.  In the next step, we dualize  the sum over $n$ and $m$ (See Section   \ref{voronoi} for full details).

	Consider the sum over $n$
	$$\mathrm{S}_3=\sum_{n \sim N} \lambda_{\pi}(n,1) n^{it} e\left(\frac{an}{q}\right).$$
	On  applying the  $\mathrm{GL(3)}$ Voronoi summation formula to the above sum, we arrive at  (see Lemma \ref{n*})
	$$\mathrm{S}_3 \approx  \frac{N^{2/3}}{q}\sum_{n_2 \sim  Q^3T^3/N} \frac{\lambda_\pi(1, n_2)}{n_2^{1/3}}\,S( \bar a, \pm n_2; q)  \,\mathrm{I_3}(...),$$
	where $\mathrm{I_3}(...)$ is an integral transform in which we need to get square root cancellations, i.e., need to show  $\mathrm{I_3}(...) \ll 1/\sqrt{T}$. Next we apply the $\mathrm{GL(2)}$ Voronoi formula to the sum over $m$ and we get (see Lemma \ref{m*} for details)
	$$ \sum_{m \sim N} \, \lambda_{f}(m) \, m^{-it}  e\left(\frac{-am}{q}\right) \approx  \frac{N}{q} \sum_{ m \sim  Q^2k^2/N }\lambda_f(m)e\left(\frac{\bar{a}m}{q}\right)	\mathrm{I_2}(...),$$
	where $\mathrm{I_2}(...)$ is an integral transform 
	in which we need to get full cancellations, i.e., need to show $	\mathrm{I_2}(...)\ll 1/k$. Next we analyse the sum over $a$ which is given by 
	$$\mathfrak{C} = \sideset{}{^{*}}{\sum}_{a \, {\rm mod} \, q} S\left(\bar{a},n_2;q\right) \, e\left(\frac{\bar{a} m}{q}\right) \approx q e\left(-\frac{\bar{m} n_2}{q}\right).$$
	We observe that the above  sum becomes an additive character with respect to $n_2$ (which saves us extra ${q}$ when we apply the Poisson after Cauchy). Thus, we arrive at the following expression
	$$\frac{1}{QT}\frac{N}{Q^2T}\frac{N}{Q}	\sum_{q \sim Q}\sum_{n_2 \sim  T^{3/2}N^{1/2}} \lambda_{\pi}(1,n_2) \sum_{m \sim k^2/T } \lambda_{f}(m) e\left(-\frac{\bar{m} n_2}{q}\right)\mathfrak{J},$$
	where $\mathfrak{J}$ is an integral transform involving the $t$-integral, $\mathrm{I_2}(...)$ and  $\mathrm{I_3}(...)$.  We analyze it in Section \ref{estimates for int}. We observe that 
	$$\mathfrak{J} \ll T \frac{1}{\sqrt{T}}\frac{1}{\sqrt{T}}\frac{1}{k}.$$
	Note that a saving of $\sqrt{T}$ comes from the $t$-integral, another saving of $\sqrt{T}$ comes from the $\mathrm{GL(3)}$-integral and the saving of $k$ comes from the $\mathrm{GL(2)}$ integral. The factor $T$ reflects the length of the $t$-integral.  Thus, on plugging it in place of $\mathfrak{J}$, we see that 
	\begin{align*}
		S_r(N)&\ll \sum_{q \sim Q}\sum_{n_2 \sim  T^{3/2}N^{1/2}} |\lambda_{\pi}(1,n_2)|\bigg| \sum_{m \sim k^2/T } \lambda_{f}(m) e\left(-\frac{\bar{m} n_2}{q}\right)\mathfrak{J}\bigg|  \\
		&\ll QT^{3/2}N^{1/2}\frac{k^2}{T}\frac{1}{k} \ll Nk.
	\end{align*}
	Thus we now need to save $k^{1+\delta}$. Next we apply Cauchy's inequality to the sum over $n_2$ to get rid of the  $\mathrm{GL(3)}$ coefficients. Thus we arrive at (see Subsection \ref{cauchys})
	$$(T^{3/2}N^{1/2})^{1/2}\left(\sum_{n_2 \sim  T^{3/2}N^{1/2}} \Big \vert \sum_{q \sim Q} \sum_{m \sim k^2/T} \lambda_{f}(m) \, e\left( - \frac{\bar{m} n}{q}\right) \, \mathfrak{I}\Big \vert^{2}\right)^{1/2}.$$
	The end game strategy is to apply the Poisson to the sum over $n_2$ (see Subsection \ref{poisson}). Opening the absolute value square followed by the Poisson, we observe that  we save the whole length, i.e., $k^2Q/T$  in the zero-frequency ($n_2=0$ case) which suffices if   $k^2Q/T >k^2$ which implies that $T<k$. On the other hand,  in the non-zero frequencies ($n_2 \neq 0$ case), we save $$\frac{T^{3/2}N^{1/2}}{(Q^2T)^{1/2}}.$$ 
	Here the factor $Q^2T$ in the denominator reflects the size of the  conductor, which is given by
	$$\mathrm{arithmetic \  conductor } \times\mathrm{ analytic  \ conductor.}.$$ 
	Note that the 	arithmetic  conductor is of size $Q^2$ and the analytic conductor is of size $T$ (because  $\mathfrak{J}$ oscillates like $n_2^{iT}$ with respect to $n_2$). We also save $Q$ due to the presence of the additive character $ e( - {\bar{m} n}/{q})$. Thus the total saving in the non-zero frequencies turns out to be 
	$$\frac{T^{3/2}N^{1/2}}{(Q^2T)^{1/2}}\times Q=TN^{1/2},$$ 
	which suffices  if  $TN^{1/2} >k^2$ which boils down to $T >k^{1/2}$. Hence we have the restriction  $k^{1/2}<T<k$. In fact, the optimal  choice for $T$ turns out to be $k^{41/51}$. Thus we get  Theorem \ref{Main}.

	%
	%
	%
	\section{\bf Applications of  Voronoi   formulae}\label{voronoi}
	In this section,  we will analyse  the sum over $n$ and $m$ in \eqref{SN after dfi} using  Voronoi summation formulae.
	\subsection{$\textrm{GL(3)}$ Voronoi}
	Let's consider the sum over $n$ 
	\begin{align}\label{S3}
		\mathrm{S_3}:= \sum_{n=1}^{\infty} \lambda_{\pi}(n,r)e\left(\frac{an}{q}\right)e\left(\frac{nx}{qQ}\right)n^{it}V\left(\frac{n}{N}\right).
	\end{align}
 Recall that $N$ is of the form $N=2^\alpha$,  $\alpha \in [-1, \infty ) \cap \mathbb{Z}$,   such that $N \leq k^{3+\epsilon}/r^2$.    We analyze $\mathrm{S_3}$  using  the  $\mathrm{GL(3)}$ Voronoi summation formula (see  Lemma \ref{gl3voronoi}).  In the present set-up, we have $g(n)=e\left(nx/qQ\right)n^{it}V\left(n/N\right)$ and $X=N$. Thus, on applying Lemma \ref{gl3voronoi} to the above sum, we get
	\begin{align} \label{after gl3 voronoi}
		\mathrm{S_3}
		=q  \sum_{\pm} \sum_{n_{1}|qr} \sum_{n_{2}=1}^{\infty}  \frac{\lambda_{\pi}(n_1,n_2)}{n_{1} n_{2}} S\left(r \bar{a}, \pm n_{2}; qr/n_{1}\right) G_{\pm} \left(n_2^\star\right),
	\end{align} 
	where $n_2^\star :={n_{1}^2 n_{2}}/(q^3 r)$ and  $G_{\pm}(n_2^\star)$ is the integral transform defined in \eqref{gl3 integral transform}. Next we extract the oscillations of the integral transform $G_{\pm}(n_2^\star)$ using Lemma \ref{GL3oscilation}, which gives us  the following expression for $G_{\pm}(n_2^\star)$ in the range  $n_2^{\star} N \gg k^{\epsilon}$:  
	\begin{align*}
		G_{\pm} (n_2^\star)=n_2^\star \int_{0}^{\infty} g(z)\sum_{j=1}^{K_0} \frac{c_{j}({\pm}) e(3 (n_2^\star z)^{1/3} ) + d_{j}({\pm}) e(-3 (n_2^\star z)^{1/3} )}{\left( n_2^\star z\right)^{j/3}} \, \mathrm{d} z+O(k^{-2020}),
	\end{align*}
	where $K_0=[\frac{6060}{\epsilon}+5]+1$ with $[.]$ denoting  the greatest integer function. From now on, we will  continue our analysis with the terms corresponding to $j=1$, as the  other terms can be treated in a similar way and in fact,  give us  better estimates.
	Thus, on plugging the above expression corresponding to the term $j=1$ into \eqref{after gl3 voronoi}, we   arrive at 
	\begin{align*}
		\frac{N^{2/3+it}}{qr^{2/3}} \sum_{\pm}\sum_{n_1|qr}n_1^{1/3} &\sum_{n_2=1}^\infty \frac{\lambda_\pi(n_1,n_2)}{n_2^{1/3}}S(r \bar a, \pm n_2; qr/n_1)\mathrm{I_3}(n_1^2n_2, \, q,\, x), 
	\end{align*}
	where
	\begin{align}\label{gl3 integral after voronoi}
		\mathrm{I_3}(n_1^2n_2, \, q,\, x):=\int_0^\infty V(z)z^{it}e\left(\frac{Nxz}{qQ}\pm \frac{3(Nn_1^2n_2z)^{1/3}}{qr^{1/3}}\right)\mathrm{d}z.
	\end{align}
	On applying the change of variable  $z \mapsto z^3$ followed by    integration by parts (differentiating $3z^2V(z^3)z^{i3t}e(Nxz^3/qQ)$ and integrating $e(\pm 3(Nn_1^2n_2)^{1/3}z/qr^{1/3})$) $j$-times  to the above integral, we observe that  
	$$ \Big|\mathrm{I_3}(n_1^2n_2, \, q,\, x)\Big|\ll_j \left(1+T+\frac{N|x|}{qQ}\right)^j \left(\frac{qr^{1/3}}{(Nn_1^2n_2)^{1/3}}\right)^j,$$
	for any integer $j \geq 0$,  and it is negligibly small if 
	\begin{align} \label{n-sum bound}
		n_{1}^{2} n_{2} \gg  \,  \,  k^{\epsilon}\max \left\{\frac{q^3T^{3}r}{N},\,  T^{3/2} N^{1/2} r \right\}=: N_{0}.
	\end{align}
	Now it remains to analyse $G_{\pm}(n_2^\star)$ for $n_2^\star N \ll k^\epsilon$, which is given as 
	\begin{align}\label{gl3 integral transform 2 }
		G_{\pm}(n_2^\star) &=\frac{1}{2 \pi i} \int_{(\sigma)} (n_2^\star)^{-s} \, \gamma_{\pm}(s)\tilde{g}(-s)  \, \mathrm{d}s  \\
		& =\frac{1}{2 \pi } \int_{-\infty}^{\infty} (n_2^\star)^{-\sigma-i\tau} \, \gamma_{\pm}(\sigma+i\tau) \, \tilde{g}(-\sigma-i\tau) \, \mathrm{d}\tau. \notag
	\end{align}
	We will analyse  this   case   in Subsection \ref{estimates  for the error term}.
	We conclude this subsection by   summarising the above discussion in the following lemma.
	\begin{lemma} \label{n*}
		Let $\mathrm{{S_3}}$ be as  in \eqref{S3}. Then, for $n_2^\star N=n_1^2n_2N/(q^3r) \gg k^\epsilon$, we have  
		\begin{align}\label{S3 for generic}
			\mathrm{{S_3}}=\frac{N^{2/3+it}}{qr^{2/3}} \sum_{\pm}\sum_{n_1|qr}n_1^{1/3}\sum_{n_2 \ll N_0/n_1^2} &\frac{\lambda_\pi(n_1,n_2)}{n_2^{1/3}}\,S(r \bar a, \pm n_2; qr/n_1)  \,\mathrm{I_3}(n_1^2n_2, \, q,\, x)  \\
			&+{\mathrm{other \ lower \ order \ terms}}+O(k^{-2020}) \notag,
		\end{align}
		where $\mathrm{I_3}(n_1^2n_2, \, q,\, x) $ is an integral transform defined in  \eqref{gl3 integral after voronoi} and $N_0$ is as  defined in \eqref{n-sum bound}. For the non-generic case $n_2^\star N \ll k^\epsilon$, we have 
		\begin{align}  \label{S3 for nongeneric}
			\mathrm{S_3}
			=q  \sum_{\pm} \sum_{n_{1}|qr} \sum_{n_{2}=1}^{\infty}  \frac{\lambda_{\pi}(n_1,n_2)}{n_{1} n_{2}} S\left(r \bar{a}, \pm n_{2}; qr/n_{1}\right) G_{\pm} \left(n_2^\star\right),
		\end{align} 
		where $G_{\pm}(n_2^\star)$ is as defined in  \eqref{gl3 integral transform 2 }.
	\end{lemma}
	From now on, we will  proceed  with the main term of  \eqref{S3 for generic}.
	
	\subsection{$\textrm{GL(2)}$ Voronoi}\label{GL2 voronoi subsection}
	We  now consider the sum over $m$ in \eqref{SN after dfi}, which is given as 
	\begin{align}\label{S2}
		\mathrm{S_2}:=\sum_{m=1}^{\infty}\lambda_f(m)m^{-it}e\left(\frac{-am}{q}\right)e\left(\frac{-mx}{qQ}\right)U\left(\frac{m}{N}\right).
	\end{align}
	On applying  the $ \mathrm{GL(2)} $ Voronoi summation formula (see  Lemma \ref{gl2voronoi}) to the above sum with $g(m)=m^{-it}e(-mx/(qQ))U(m/N)$, we get
	\begin{align*}
		\mathrm{S_2}=\frac{2\pi i^kN^{1-it}}{q} \sum_{m=1}^{\infty}\lambda_f(m)e\left(\frac{\bar{a}m}{q}\right) 	\mathrm{I_2}(m,\,q,\,x),
	\end{align*}
 where 
	\begin{align}\label{gl2 transform}
		\mathrm{I_2}(m,\,q,\,x):=\int_0^{\infty}U(y) y^{-it}e\left(\frac{-Nxy}{qQ}\right)J_{k-1}\left(\frac{4\pi \sqrt{mNy}}{q}\right)\mathrm{d} y.
	\end{align}
	 We now analyse the above integral to determine the range of $m$.  We claim that $\mathrm{I_2}(m,\,q,\,x)$ is negligibly small unless
	\begin{align}\label{M_0}
		M:= \frac{q^2(k-1)^2k^{-\epsilon}}{N} \leq m \leq k^{\epsilon}\max\left(\frac{(k-1)^2q^2}{N},\,  T\right)=:M_0. 
	\end{align}
	In fact, in the range $m < M$,  we have $${4\pi \sqrt{mNy}}/{q} < 4\pi \sqrt{5/2}(k-1)^{1-\epsilon/2} \ll (k-1)^{1-\epsilon/2}.$$ Thus, by   Lemma \ref{bessel for x <k},   $\mathrm{I_2}(m,\,q,\,x)$  is negligibly small. 
	
	Next we consider   the range $m >  M_0$ and we claim that $\mathrm{I_2}(m,\,q,\,x)$ is also  negligibly small. We note that   ${4\pi \sqrt{mNy}}/{q} > (k-1)^{1+\epsilon/2}$. Thus  we    apply Langer's expansion (see Lemma \ref{langer series})  for   $J_{k-1}$. On applying Corollary \ref{Langer 2} with $x={4\pi \sqrt{mNy}}/{q}$, we see that $	\mathrm{I_2}(m,\,q,\,x)$, up to a negligible error term,  is given by  
	\begin{align*}
		\sum_{j=0}^{2019}\frac{1}{(k-1)^{j+1/2}}	\int_0^{\infty}U_j(y) y^{-it}e\left(\frac{-Nxy}{qQ}\right){e\left(\pm \frac{(k-1)(w-\tan^{-1}w)}{2\pi}\right)}\mathrm{d}y,
	\end{align*}
	where $U_j(y)=U(y)P_j\left(({w-\tan^{-1}w})^{-1}\right)w^{-1/2}$ with $$w=\left(\frac{x^2}{(k-1)^2}-1\right)^{1/2}=\left(\frac{16\pi^2mNy}{q^2(k-1)^2}-1\right)^{1/2},$$
	and $P_j$ is a polynomial of the  degree $j$ with  coefficients which are  bounded functions of $k$. Note that $w > ((k-1)^\epsilon-1)^{1/2}$. Thus $$w-\tan^{-1}w=w-\frac{\pi}{2}+\tan^{-1}\frac{1}{w} \asymp w,$$
	and $U_j^{(i)}(y) \ll_{i} k^{\epsilon i}$ for any integer $i \geq 0$.
	Next we  apply  integration by parts  $i$-times  to the   $y$-integral and  we get
	\begin{align*}
		|\mathrm{I_2}(m,\,q,\,x)| &\ll_i \left(k^\epsilon +T+\frac{N|x|}{qQ}\right)^i \left(\frac{1}{(k-1) \sqrt{mN}/(q(k-1))}\right)^i  \\
		& \ll \left(\frac{Tq}{\sqrt{M_0 N}}+\frac{N}{Q\sqrt{M_0N}}\right)^i \ll \left(\frac{k^{\epsilon} T}{k}+\frac{1}{k^{\epsilon}}\right)^i \ll \frac{1}{k^{\epsilon i}}.
	\end{align*}
	Upon taking $i$ sufficiently large, we get the claim. 
	We end this subsection by summarizing the above arguments in the following lemma.
	\begin{lemma} \label{m*}
		Let $\mathrm{S_2}$ be the sum over $m$ as given in \eqref{S2}. Then we have 
		\begin{align}\label{S2 final}
			\mathrm{S_2}=&\frac{2\pi i^kN^{1-it}}{q} \sum_{M\leq m \leq M_0}\lambda_f(m)e\left(\frac{\bar{a}m}{q}\right)	\mathrm{I_2}(m,\,q,\,x)+O(k^{-2020}),
		\end{align}
		where
		\begin{align*}
			\mathrm{I_2}(m,\,q,\,x)=\int_0^{\infty}U(y) y^{-it}e\left(\frac{-Nxy}{qQ}\right)J_{k-1}\left(\frac{4\pi \sqrt{mNy}}{q}\right)\mathrm{d} y, 
		\end{align*}
		and $M$ and $M_0$ are the ranges of $m$ defined in \eqref{M_0}. 
	\end{lemma}
		\section{\bf Cauchy and Poisson}
		After the applications of the Voronoi formulae and applying   Lemma \ref{n*} and Lemma \ref{m*} to  \eqref{SN after dfi}, we find that the expression in \eqref{SN after dfi}, up to an error term to be treated in Section \ref{estimates for the error term},  has been  essentially  reduced to  
		\begin{align} \label{s(N) after reduced}
			& \frac{N^{5/3}}{QTr^{2/3}}  \sum_{1 \leq q \leq Q}\frac{1}{q^3} \, \sideset{}{^\star}\sum_{a \, {\rm mod} \, q} \, \sum_{\pm}\sum_{n_1|qr}n_1^{1/3}  \\
			&  \times   \sum_{n_2\ll {N_0/n_1^2}} \frac{\lambda_\pi(n_1,n_2)}{n_2^{1/3}}S(r \bar a, \pm n_2; qr/n_1) \notag  \\
			&\times \sum_{M \ll m \ll M_0}\lambda_f(m)e\left(\frac{\bar{a}m}{q}\right) \mathrm{J}_{\pm}(m,\, n_1^2n_2,\, q) ,\notag
		\end{align}
		where 
		\begin{align}\label{J+-}
			\mathrm{J}_{\pm}(m,\, n_1^2n_2,\, q)&= \int_{\mathbb{R}} \int_{\mathbb{R}}W({x}/{Q^\epsilon}) \, g(q,\, x)\	\mathrm{I_2}(m,\,q,\,x)\, \mathrm{I_3}(n_1^2n_2, \, q,\, x)\, V\left(\frac{t}{T}\right)\,  \mathrm{d}t\, \mathrm{d}x.
		\end{align}
		In this section, we will  analyse  \eqref{s(N) after reduced}  using  Cauchy's  inequality and the Poisson summation formula.
		\subsection{Cauchy's  inequality}\label{cauchys}
		Splitting the sum over $q$ into dyadic blocks $q \sim C$, i.e.,  $C\leq q< 2C$, $C \ll Q$ and  writing $q=q_1q_2$ with $q_1|(n_1r)^\infty$, $(n_1r,\, q_2)=1$, we see that  the expression in \eqref{s(N) after reduced}  is dominated by
		\begin{align}\label{Q-dyadic block}
			\sup_{C \ll Q}\frac{N^{5/3}\log Q}{QTr^{2/3}C^3} \, \sum_{\pm}\sum_{\frac{n_1}{(n_1,r)}\ll C}n_1^{1/3}\sum_{\frac{n_1}{(n_1,r)}|q_1|(n_1r)^\infty}\sum_{n_2\ll {N_0}/{n_1^2}} \frac{|\lambda_\pi(n_1,n_2)|}{n_2^{1/3}} \\ 
			\times \,  \Big|\sum_{q_2\sim C/q_1}\sum_{M \ll m \ll M_0}\lambda_f(m)\mathcal{C_{\pm}}(q, n_2,m)\mathrm{J}_{\pm}(m,n_1^2n_2,q)\Big|\notag ,
		\end{align}
		where the character sum $\mathcal{C_{\pm}}(q,n_2,m)=	\mathcal{C_{\pm}}(. . .)$ is defined as 
		\begin{align*}
			\mathcal{C_{\pm}}(. . .):=\sideset{}{^\star}\sum_{a \, {\rm mod} \, q}S(r \bar a, \pm n_2; qr/n_1)e\left(\frac{\bar{a}m}{q}\right)=\sum_{d|q}d\mu\left(\frac{q}{d}\right)\sideset{}{^ \star}\sum_{\substack{\alpha \, {\rm mod} \, qr/n_1 \\ n_1\alpha\equiv-m \, {\rm mod} \, d}}e\left(\pm\frac{\bar{\alpha}n_2}{qr/n_1}\right).
		\end{align*}  
		Next we  analyse the expression  inside $| \ |$.  We first  split the sum over $m$ into dyadic blocks $m \sim M_1$, $M\ll M_1\ll M_0$ and then   apply  Cauchy's inequality to the sum  over $n_2$ in \eqref{Q-dyadic block} to arrive at
		\begin{align}\label{S(N)after cauchy}
			S_r(N) \ll \mathop{\sup_{ \substack{M\ll M_1\ll M_0 \\ C \ll Q}}}\frac{N^{5/3}(QM_0)^{\epsilon}}{QTr^{2/3}C^3}\sum_{\pm}\sum_{\frac{n_1}{(n_1,r)}\ll C}n_1^{1/3}\Theta^{1/2}\sum_{\frac{n_1}{(n_1,r)}|q_1|(n_1r)^\infty}\sqrt{\Omega_{\pm}},
		\end{align}
		where 
		\begin{align}\label{theta}
			\Theta=\sum_{n_2\ll N_0/n_1^2} \frac{|\lambda_\pi(n_1,n_2)|^2}{n_2^{2/3}},
		\end{align} 
		and 
		\begin{align}\label{omega}
			\Omega_{\pm}=\sum_{n_2\ll N_0/n_1^2}\Big|\sum_{q_2\backsim C/q_1}\sum_{m \backsim M_1}\lambda_f(m)\mathcal{C}_{\pm}(q,n_2,m)\mathrm{J}_{\pm}(m,n_1^2n_2,q)\Big|^2,
		\end{align}
		with  
		\begin{align} \label{N0}
			\frac{(k-1)^2C^2}{N}k^{-\epsilon}&=M\ll M_1 \ll M_0= k^{\epsilon}\max\left(\frac{(k-1)^2C^2}{N},\,  T\right),   \\ 
			N_0&=k^{\epsilon}\max \left\{\frac{\left(CT\right)^{3}r}{N},\,  T^{3/2} N^{1/2}r \right\}.\notag
		\end{align}

		\subsection{ Poisson summation }  \label{poisson}Next we  apply the Poisson summation formula to the sum  over $n_2$ with the  modulus $\mathfrak{q}:=q_1q_2q_2^\prime r/n_1$ in \eqref{omega}. To this end, we first split the sum over $n_2$ into dyadic blocks $n_2 \sim \tilde{N}/n_1^2$, $\tilde{N} \ll N_0$. Then opening the absolute value square in \eqref{omega}, we arrive at 
		\begin{align*}
			\Omega_{\pm}=&\mathop{\sum \sum}_{q_2, \, q_2^{\prime}\sim C/q_1}\mathop{\sum \sum}_{m,\, m^{\prime} \sim M_1}\lambda_f(m) \overline{\lambda_f(m^\prime)}\,  \Delta_{\pm},
		\end{align*}
		where 
		$$ \Delta_{\pm}= \sum_{\tilde{N}}\sum_{n_2 \in \mathbb{Z}}\phi\left(\frac{n_1^2n_2}{\tilde{N}}\right) \mathcal{C}_{\pm}(q,n_2,m)  \overline{\mathcal{C}_{\pm }(q^\prime,n_2,m^\prime)}\mathrm{J}_{\pm}(m,n_1^2n_2,q)\overline{\mathrm{J}_{\pm}(m^{\prime},n_1^2n_2,q^{\prime})},$$
		$q^{\prime}=q_1q_2^{\prime}$ and $\phi(w)$  is a non-negative smooth  function supported on  $[2/3,\, 3]$ with $\phi(w)=1$ for $w \in [1, \, 2]$ and $\phi^{(j)}(w) \ll_j 1$. Now 
		applying the change of variable
		$$n_2 \rightarrow n_2\mathfrak{q}+\beta, \  \  \   \beta \ \mathrm{mod}\  \mathfrak{q},$$
		we get the following expression for $\Delta_{\pm}$: 
		\begin{align*}
			\Delta_{\pm}=&\sum_{\tilde{N}} \sum_{\beta \; {\rm mod} \; \mathfrak{q} } \mathcal{C}_{\pm}(q,\beta,m)  \overline{\mathcal{C}_{\pm }(q^\prime,\beta,m^\prime)}  \\
			& \times \sum_{n_2 \in \mathbb{Z}} 	 \phi\left(\frac{n_2\mathfrak{q}+\beta}{\tilde{N}/n_1^2}\right) \mathrm{J}_{\pm}(m,n_1^2(n_2\mathfrak{q}+\beta), q)\,  \overline{\mathrm{J}_{\pm}(m^{\prime},n_1^2(n_2\mathfrak{q}+\beta),q^{\prime})}.
		\end{align*}
		On applying the Poisson summation formula to the sum over $n_2$, we  see that 
		\begin{align}\label{final omega}
			\Omega_{\pm}=&\sum_{\tilde{N}}\frac{\tilde{N}}{n_1^2}\mathop{\sum \sum}_{q_2,\, q_2^{\prime}\sim C/q_1}\mathop{\sum \sum}_{m,\, m^{\prime} \sim M_1}\lambda_f(m)\overline{ \lambda_f(m^\prime)} \sum_{n_2 \in \mathbb{Z}}\mathfrak{C}_{\pm}\mathcal{J}_{\pm},
		\end{align}
		where 
		\begin{align}\label{character sum}
			\mathfrak{C}_{\pm}&= \frac{1}{\mathfrak{q}}\sum_{\beta \; {\rm mod} \; \mathfrak{q} } \mathcal{C}_{\pm}(q,\beta,m) \, \overline{\mathcal{C}_{\pm }(q^\prime,\beta,m^\prime)} \, e\left(\frac{n_2\beta}{\mathfrak{q}}\right) \\
			&=\mathop{\sum \sum}_{\substack{d|q \\ d^{\prime}|q^\prime}}dd^\prime\mu\left(\frac{q}{d}\right)\mu\left(\frac{q^\prime}{d^\prime}\right)\mathop{\sideset{}{^ \star}\sum_{\substack{\alpha \, {\rm mod} \, qr/n_1 \\ n_1\alpha\equiv-m \, {\rm mod} \, d}} \  \sideset{}{^ \star}\sum_{\substack{\alpha^\prime \, {\rm mod} \, q^\prime r/n_1 \\ n_1\alpha^\prime \equiv-m^\prime \, {\rm mod} \, d^\prime}}}_{\pm \bar{\alpha}q_2^\prime \mp\bar{\alpha}^\prime q_2\equiv -n_2  \, {\rm mod} \, \mathfrak{q}}1 \notag,
		\end{align}
		and 
		\begin{align}
			\mathcal{J}_{\pm}&= \int_{\mathbb{R}} \phi(w) \, \mathrm{J}_{\pm}(m,\tilde{N}w, q)\,  \overline{\mathrm{J}_{\pm}(m^{\prime},\tilde{N}w,q^{\prime})}\,  e\left(-\frac{n_2\tilde{N} w}{q_1q_2q_2^\prime r n_1}\right) \, \mathrm{d}w. 
		\end{align}
		On estimating the sum over $\tilde{N}$,   we  get
		\begin{align}  \label{Omega}
			\Omega_{\pm} \ll  k^{\epsilon} \sup_{\tilde{N} \ll N_0}\frac{\tilde{N}}{n_1^2}\mathop{\sum\,  \sum}_{q_2,\, q_2^{\prime}\sim C/q_1}\mathop{\sum\,  \sum}_{m,\, m^{\prime} \sim M_1} |\lambda_f(m)||\lambda_{f}(m^\prime)| \sum_{n_2 \in \mathbb{Z}}|\mathfrak{C}_{\pm}||\mathcal{J}_{\pm}|.
		\end{align}
	
		\section{\bf Estimates for the  integral transform}\label{estimates for int}
		In this section, we will analyse the  integral transform 
		\begin{align}\label{ integral}
			\mathcal{J}_{\pm}&= \int_{\mathbb{R}} \phi(w) \, \mathrm{J}_{\pm}(m,\tilde{N}w, q)\,  \overline{ \mathrm{J}_{\pm}(m^{\prime},\tilde{N}w,q^{\prime})}\,  e\left(-\frac{n_2\tilde{N} w}{q_1q_2q_2^\prime r n_1}\right) \, \mathrm{d}w,
		\end{align}
		where (see \eqref{J+-}) 
		\begin{align}\label{integral before cauchy}
			\mathrm{J}_{\pm}(m,\, \tilde{N}w,\, q)=& \int_{\mathbb{R}} \int_{\mathbb{R}}W(x/Q^\epsilon) \, g(q,\, x)\	\mathrm{I_2}(m,\,q,\,x)\, \mathrm{I_3}(\tilde{N}w, \, q,\, x)\, V\left(\frac{t}{T}\right)\,  \mathrm{d}t\, \mathrm{d}x \\
			=&\int_{\mathbb{R}}W(x/Q^\epsilon)g(q,x) \int_{\mathbb{R}}V\left(\frac{t}{T}\right)\int_0^{\infty}U(y)y^{-it}\int_0^{\infty}V(z)z^{it} \notag \\
			& \times e\left(\frac{Nx(z-y)}{qQ}\pm \frac{3(N\tilde{N}wz)^{1/3}}{qr^{1/3}}\right)J_{k-1}\left(\frac{4\pi \sqrt{mNy}}{q}\right)\, \mathrm{d}z\, \mathrm{d}y\,  \mathrm{d}t\, \mathrm{d}x \notag.
		\end{align}
		and $\mathrm{J}_{\pm}(m^{\prime},\tilde{N}w,q^{\prime})$ is similarly defined. We first analyse $\mathrm{J}_{\pm}(m,\, \tilde{N}w,\, q)$. 
		%
	\begin{lemma}\label{restriction on u }
			Let $\mathrm{J}_{\pm}(m,\, \tilde{N}w,\, q)$ be  as above. Then  we have 
			\begin{align}\label{reduced integral}
				\mathrm{J}_{\pm}(m,\, \tilde{N}w,\, q)=
				\int_{\mathbb{R}}  V\left(\frac{t}{T}\right)   \int_{u \ll \frac{k^\epsilon C}{QT}} \,  \mathrm{I}_u\, 	\mathrm{I}_{\pm}(m,\tilde{N}w,q)\,  \mathrm{d}u \, \mathrm{d} t +O(k^{-2020}),
			\end{align}
			where $\mathrm{I}_u$  and $\mathrm{I}_{\pm}(m,\tilde{N}w,q)$  are the  integrals    defined in \eqref{x integral}  and \eqref{y-integral} respectively,  with the weight function $U_{u,t}$ satisfying 	$U_{u,t}^{(j)}(y) \ll_j k^{\epsilon j}$ for $j \geq 0$.

		\end{lemma}
		\begin{proof} 
			We  consider two cases. 
			\paragraph{Case 1.}  $q \sim C \ll Q^{1-\epsilon}$.

			Consider the integral over $x$ in \eqref{integral before cauchy} which is given by 
			\begin{align*}
				\mathrm{I}_{z-y}:&=\int_{\mathbb{R}}W(x/Q^{\epsilon})g(q,x)e\left(\frac{Nx(z-y)}{qQ}\right)\, \mathrm{d}x \\
				&=Q^{\epsilon} \int_{\mathbb{R}}W(x)g(q,xQ^\epsilon)e\left(\frac{NxQ^{\epsilon}(z-y)}{qQ}\right)\, \mathrm{d}x.
			\end{align*}
			We now split the above integral as 
			\begin{align*}
				\int_{\mathbb{R}}... \, \mathrm{d}x=	\int_{-Q^{-2\epsilon}}^{Q^{-2\epsilon}}... \, \mathrm{d}x+	\int_{D}... \, \mathrm{d}x,
			\end{align*}
			where $D=[-2, \,2] \backslash [-Q^{-2\epsilon}, \, Q^{-2\epsilon}]$. Note that,  for $x \in [-Q^{-2\epsilon}, \, Q^{-2\epsilon}] $, we have 
			$$ g(q,xQ^{\epsilon})=1+h(q,xQ^\epsilon)=  1+O \left(\frac{Q}{q} \left(\frac{q}{Q}+|x|Q^\epsilon\right)^{B}\right) =1+O(Q^{-2020}).$$
			Thus, in this range, we can replace $g(q,xQ^\epsilon)$ by $1$ at the cost of a negligible error term.  Then  by repeated integration by parts we see that the integral is negligibly small unless 
			\begin{align}\label{restriction on z and y}
				|z-y| \ll k^{\epsilon}C/(QT).
			\end{align}
			Now we consider the complementary range, i.e.,  $x \in D$. Note that, using the second  property (see \eqref{g properties}) of $g(q,x)$,  we have 
			$$ x^j \frac{\partial ^j}{\partial x^j}g(q,x) \ll \log Q \min \left\lbrace \frac{Q}{q}, \frac{1}{|x|}\right\rbrace \ll Q^{2\epsilon}.$$
			Thus, on using integration by parts repeatedly, we see that  the integral is negligibly small unless \eqref{restriction on z and y} holds true. 
			\paragraph{Case 2.}$q \sim C \gg Q^{1-\epsilon}$. 
			
			In this case, we consider the $t$-integral in \eqref{integral before cauchy} which is given by 
			$$\int_{\mathbb{R}}V\left(\frac{t}{T}\right) \left(\frac{z}{y}\right)^{it}\, \mathrm{d}t.$$
			Now applying  the change of variable $t \rightarrow tT$ followed by integration by parts repeatedly, we conclude that the $t$-integral is negligibly small unless  $$|z-y| \ll k^{\epsilon}/T\ll  k^{\epsilon}C/(QT).$$
			Next writing   $z-y=u$ with $u \ll k^\epsilon C/(QT)$ in \eqref{integral before cauchy}, we see that  
			\begin{align}
				\mathrm{J}_{\pm}(m,\, \tilde{N}w,\, q)=
				\int_{\mathbb{R}}  V\left(\frac{t}{T}\right)   \int_{u \ll \frac{k^\epsilon C}{QT}} \,  \mathrm{I}_u\, 	\mathrm{I}_{\pm}(m,\tilde{N}w,q)\,  \mathrm{d}u \, \mathrm{d} t +O(k^{-2020}),
			\end{align}
			where  
			\begin{align}\label{x integral}
				\mathrm{I}_{u}=\int_{\mathbb{R}}W(x/Q^{\epsilon})g(q,x)e\left(\frac{Nxu}{qQ}\right)\, \mathrm{d}x,
			\end{align}
			and 
			\begin{align}\label{y-integral}
				\mathrm{I}_{\pm}(m,\tilde{N}w,q)=\int_0^{\infty}\, U_{u,t}(y)
				e\left(\pm \frac{3(N\tilde{N}w(y+u))^{1/3}}{qr^{1/3}}\right)J_{k-1}\left(\frac{4\pi \sqrt{mNy}}{q}\right)\mathrm{d}y,
			\end{align}
			with  $U_{u,t}(y)=U(y)V(y+u)(1+u/y)^{it}$.
			Note that $$ \frac{\partial ^j}{\partial y^j}\left(1+\frac{u}{y}\right)^{it}= \frac{\partial ^j}{\partial y^j}\exp\left({it\log\left(1+\frac{u}{y}\right)}\right)\ll_j k^{\epsilon j},  \ \ \  j \geq 0.$$ Thus
			$U_{u,t}^{(j)}(y) \ll_j k^{\epsilon j}$ for $j \geq 0$. Hence the lemma follows. 

		\end{proof}
		The analysis for  $\mathrm{J}_{\pm}(m^{\prime},\tilde{N}w,q^{\prime})$  is exactly same. 
	Thus on plugging the expression of $ \mathrm{J}_{\pm}(m,\tilde{N}w, q)$  from \eqref{reduced integral} and a corresponding expression of  $\mathrm{J}_{\pm}(m^{\prime},\tilde{N}w,q^{\prime})$   into \eqref{  integral}, we see that 
		\begin{align}\label{main integral after u}
			\mathcal{J}_{\pm}=	\int_{\mathbb{R}}  \int_{\mathbb{R}}  V\left(\frac{t}{T}\right)  V\left(\frac{t^\prime}{T}\right) \int_{u \ll \frac{k^\epsilon C}{QT}} \,   \int_{u^\prime \ll \frac{k^\epsilon C}{QT}} \,\mathrm{I}_u\,  \overline{ \mathrm{I}}_{u^\prime}  \,	\mathfrak{J}_{\pm} \,  \mathrm{d}u^\prime  \,  \mathrm{d}u \,  \mathrm{d} t^\prime   \,  \mathrm{d} t+O(k^{-2020}),
		\end{align}
		where 
		\begin{align}\label{J}
			\mathfrak{J}_{\pm}:=\int_{\mathbb{R}}\phi(w)\, \mathrm{I}_{\pm}(m,\tilde{N}w,q)\, \overline{\mathrm{I}_{\pm}(m^\prime,\tilde{N}w,q^\prime)}\, e\left(-\frac{ n_2 \tilde{N}w}{q_2q_2^\prime q_1r n_1}\right) \, \mathrm{d}w,
		\end{align}
		which we will analyse now. 	We  have  the following proposition.
		\begin{proposition}\label{bound for integral} 
			Let $\mathfrak{J}_{\pm}$  be  as above.  Then $\mathfrak{J}_{\pm}$ is negligibly small unless 
			\begin{align} \label{N2}
				n_2 \ll k^{\epsilon}\frac{CN^{1/3}r^{2/3}n_1}{q_1\tilde{N}^{2/3}}:=N_2,
			\end{align}
			in which case  we have
			\begin{align}\label{integral bound for all case}
				\mathfrak{J}_{\pm} \ll \frac{k^\epsilon C^2}{M_1N}. 
			\end{align}
			Furthermore, if $q \sim C \gg k^{1+\epsilon}$ and $n_2\neq 0$, then we have  
			\begin{align}\label{y integral  bound for large c}
				\mathfrak{J}_{\pm}  \ll  \frac{Cr^{1/3}k^{2/3}}{k^2(N\tilde{N})^{1/3}}  .
			\end{align}
		\end{proposition}

		Before proving the  proposition, we will  analyze  $\mathrm{I}_{\pm}(m,\tilde{N}w,q)$ and  $\mathrm{I}_{\pm}(m^\prime,\tilde{N}w,q^\prime)$. We have the following lemma. 
		\begin{lemma}\label{y-integral bound}
			Let $\mathrm{I}_{\pm}(m,\tilde{N}w,q)$ be  the integral transform defined  in \eqref{y-integral}. Let $\mathfrak{b}=4\pi \sqrt{mN}/q$  and  $\mathfrak{a}=\mathfrak{a}(q,r):={3(N\tilde{N})^{1/3}}/(qr^{1/3}) \gg k^\epsilon$.  Then  $\mathrm{I}_{\pm}(m,\tilde{N}w,q)$ is negligibly small unless $\mathfrak{a} \leq k^{\epsilon} \mathfrak{b}$ . In the   case  when $\mathfrak{a} \leq k^{-\epsilon} \mathfrak{b}$, we have 
			\begin{align*}
				\mathrm{I}_{\pm}(m,\tilde{N}w,q)  \ll {k^\epsilon}/{\mathfrak{b}}.
			\end{align*}
			Furthermore, if $q\sim C \gg k^{1+\epsilon}$, then $\mathfrak{b} \asymp k$ and  we have 
			\begin{align} \label{y-integral expansion}
				\mathrm{I}_{\pm}(m,\tilde{N}w,q)=&\frac{e\left(f(\tau_0) \right) }{\sqrt{f^{\prime \prime}(\tau_0)}}\frac{c_3\mathfrak{a}^{9/2}w^{3/2}}{\mathfrak{b}^5\tau_0^5 \sqrt{1-\tau_0^2}} U_{u,t}\left(\left(\frac{4\pi \mathfrak{a}w^{1/3}}{3\mathfrak{b}  \tau_0}\right)^6\right) \\
				&+\mathrm{lower \  order\ terms} 
				+O\left(k^{-2020}\right), \notag
			\end{align}
			where  $\tau_0$ is the stationary point of the phase function  $$f(\tau)=\frac{(k-1)\sin^{-1}\tau}{2 \pi}+\frac{16\pi ^2\mathfrak{a}^3w}{27\mathfrak{b}^2\tau^2},$$ which is given by \eqref{stat point} and $c_3=c_2e(1/8)=3\sqrt{2}(4\pi/3)^{5}e(1/4)$. In the  remaining case, i.e.,  $k^{-\epsilon} \mathfrak{b} \leq \mathfrak{a} \leq k^{\epsilon} \mathfrak{b}$, $\mathrm{I}_{\pm}(m,\tilde{N}w,q) $ essentially looks like
			\begin{align*} 
				\frac{c_2\mathfrak{a}^{9/2}w^{3/2}}{\mathfrak{b}^5}	\int_{b_1/2}^{1} \frac{1}{ \tau^5\sqrt{1-\tau^2} } U_{u,t}\left(\left(\frac{4 \pi \mathfrak{a}w^{1/3}}{3\mathfrak{b}  \tau}\right)^6\right)e\left( f(\tau)\right)	 \mathrm{d}\tau,
			\end{align*}
			where $b_1:={4\pi(2/3)^{1/3}\mathfrak{a}}/{(3(2.5)^{1/6}\mathfrak{b})}$.
		\end{lemma}
		
		\begin{proof}
			Let's recall from \eqref{y-integral} that 
			\begin{align}\label{y integral 2}
				\mathrm{I}_{\pm}(m,\tilde{N}w,q)=\int_{1/2}^{5/2}\, U_{u,t}(y)\, 
				e\left(\pm \mathfrak{a}w^{1/3}(y+u)^{1/3}\right)J_{k-1}\left( \mathfrak{b} \sqrt{y}\right)\, \mathrm{d}y. 
			\end{align}
			Consider   the term $e(\pm \mathfrak{a}w^{1/3}(y+u)^{1/3})$. It  can be written  as
			$$e(\pm \mathfrak{a}w^{1/3}(y+u)^{1/3})=e(\pm \mathfrak{a}w^{1/3}y^{1/3})\, e(\pm \mathfrak{a}w^{1/3}y^{1/3}((1+u/y)^{1/3}-1)).$$ 
			Note that  
			$$ \frac{\partial ^j}{\partial y^j}e(\pm \mathfrak{a}w^{1/3}y^{1/3}((1+u/y)^{1/3}-1)) \ll_j k^{\epsilon j}, \  \ \ j \geq 0.$$ 
			 This is obvious for $j=0$.  We will verify it for $j=1$ (for other $j$, a similar calculation will follow).   Let $h(y,w):= \pm \mathfrak{a}w^{1/3}y^{1/3}((1+u/y)^{1/3}-1)$. Thus for $j=1$ we have 
			\begin{align*}
				\frac{\partial }{\partial y}e(h(y,w))&=e(h(y,w))(\pm \mathfrak{a})w^{1/3}\left(\frac{ (1+u/y)^{1/3}-1}{3y^{2/3}}-\frac{u}{3y^{5/3}(1+u/y)^{2/3}}\right).
			\end{align*}
			Thus, using $y,\, w \asymp 1$ and $(1+u/y)^{1/3}-1 \ll |u|$, we see that 
			$$\frac{\partial }{\partial y}e(h(y,w))\ll \mathfrak{a}|u| \ll  \frac{(N\tilde{N})^{1/3}}{Cr^{1/3}}\frac{Ck^\epsilon}{Q{T}}\ll  \frac{(N{N_0})^{1/3}}{Qr^{1/3}}\frac{Qk^\epsilon}{Q{T}}\ll k^\epsilon ,$$
			where we used \eqref{N0}  to estimate $N_0$. 
			Hence we  can  insert $e(h(y,w))$ into the weight function $U_{u,t}(y)$.  
			Thus we arrive at the following expression:
			\begin{align}\label{y integral without u}
				\mathrm{I}_{\pm}:=\mathrm{I}_{\pm}(m,\tilde{N}w,q)=\int_{1/2}^{5/2}\, U_{u,t}(y)\, 
				e\left(\pm \mathfrak{a}w^{1/3}y^{1/3}\right)J_{k-1}\left( \mathfrak{b} \sqrt{y}\right)\, \mathrm{d}y.
			\end{align}
			Notice  the slight abuse of notation: the weight function $U_{u,t}$ in the above expression  is different from the one in \eqref{y integral 2}. To analyze \eqref{y integral without u} further, we use an integral representation  of the Bessel function $J_{k-1}$.  Thus, on applying  \eqref{Bessel defi}  to  the Bessel function $J_{k-1}$, we see that 
			\begin{align*}
				\mathrm{I}_{\pm}=\frac{1}{2\pi}\int_{-\pi}^{\pi}e ^{i(k-1)\tau} \int_{1/2}^{5/2}U_{u,t}(y)e\left( \pm \mathfrak{a}w^{1/3}y^{1/3}-{\mathfrak{b}\sqrt{y}\sin \tau}/{2\pi}\right)\mathrm{d}y\, \mathrm{d}\tau.
			\end{align*}
			We now split the  $\tau$-integral as follows:
			\begin{align*}
				\int_{-\pi}^{\pi}...\,\mathrm{d}\tau=\int_{0}^{\pi/2}...\, \mathrm{d}\tau+\int_{\pi/2}^{\pi}...\, \mathrm{d}\tau+\int_{-\pi/2}^{0}...\, \mathrm{d}\tau+\int_{-\pi}^{-\pi/2}...\,\mathrm{d}\tau.
			\end{align*}
			Let $\mathrm{I}_{\pm}^{(i)}$ denote the  $i$-th integral in the right hand side  of the above expression  for $i=1,\, 2,\, 3$ and $4$.  Let's first consider   $\mathrm{I}_{\pm}^{(1)}$ which is defined as follows:
			\begin{align}\label{first range of tau int}
				\mathrm{I}_{\pm}^{(1)}=\frac{1}{2\pi}\int_{0}^{\pi/2}e ^{i(k-1)\tau} \int_{1/2}^{5/2}U_{u,t}(y)e\left( \pm \mathfrak{a}w^{1/3}y^{1/3}-{\mathfrak{b}\sqrt{y}\sin \tau}/{2\pi}\right)\mathrm{d}y\, \mathrm{d}\tau.
			\end{align}
			Next we apply stationary phase analysis to the $y$-integral. By  the change of variable $y \rightarrow y^3$, we arrive at the following expression of the $y$-integral:
			\begin{align*}
				\int_{\sqrt[3]{1/2}}^{\sqrt[3]{5/2}}3y^2U_{u,t}(y^3)e\left( \pm \mathfrak{a}w^{1/3}y-{\mathfrak{b}y^{3/2}\sin \tau}/{2\pi}\right)\textrm{d}y.
			\end{align*}
			Note that if we have negative sign with $\mathfrak{a}$, then the above integral is negligibly small by Lemma \ref{derivative bound}. Thus we  proceed with the $y$-integral of $\mathrm{I}_{+}^{(1)}$, which is given by 
			\begin{align*}
				\int_{\sqrt[3]{1/2}}^{\sqrt[3]{5/2}}3y^2U_{u,t}(y^3)e\left(  \mathfrak{a}w^{1/3}y-{\mathfrak{b}y^{3/2}\sin \tau}/{2\pi}\right)\textrm{d}y.
			\end{align*}
			Here the phase function is given by $f_1(y)=\mathfrak{a}w^{1/3}y-{\mathfrak{b}y^{3/2}\sin \tau}/{2\pi}$. On computing the first order derivative, we see that the  stationary point occurs at $y_0=\left(\frac{4 \pi \mathfrak{a}w^{1/3}}{3\mathfrak{b}\sin \tau}\right)^2$.  
			Note that $$\sqrt[3]{1/2}\leq y_0\leq\sqrt[3]{5/2} \iff \frac{4\pi}{3}\frac{\mathfrak{a}w^{1/3}}{\mathfrak{b}(2.5)^{1/6}} \leq \sin \tau \leq  \frac{4\pi}{3}\frac{\mathfrak{a}w^{1/3}}{\mathfrak{b}(0.5)^{1/6}}.$$
			Let $b_1:=\frac{4\pi}{3}\frac{\mathfrak{a}(2/3)^{1/3}}{\mathfrak{b}(2.5)^{1/6}}$
			and $b_2:= \frac{4\pi}{3}\frac{3^{1/3}\mathfrak{a}}{\mathfrak{b}(0.5)^{1/6}}$.
			We consider three cases. 
			\paragraph{Case 1} $\mathfrak{a} \geq k^\epsilon \mathfrak{b}$.
			In this case we have $b_1\geq 2$. Thus there is no stationary point in the range $[(1/2)^{1/3}, \, (5/2)^{1/3}]$. Moreover, $$f_1^{\prime}(y)=\mathfrak{a}w^{1/3}-3\mathfrak{b}\sqrt{y}\sin \tau /(4\pi) \gg  \mathfrak{b},  \ f_1^{(j)}(y) \ll  \mathfrak{b}, \ j\geq 2.$$ 
			Hence, by Lemma \ref{derivative bound}, the integral is negligibly small. This  proves the first part of the lemma.  
			\paragraph{Case 2} $\mathfrak{a} \leq k^{-\epsilon} \mathfrak{b}.$
			In this case  we have $0<b_1/2 <2b_2 \ll k^{-\epsilon} <1.$   we  now  split   the $\tau$-integral in \eqref{first range of tau int}  as follows: 
			$$\int_{0}^{\pi/2} ... \, \mathrm{d}\tau=\int_{0}^{\sin^{-1}(b_1/2)} ... \, \mathrm{d}\tau+\int_{\sin^{-1}(b_1/2)}^{\sin^{-1}2b_2} ... \, \mathrm{d}\tau + \int_{\sin^{-1}2b_2}^{\pi/2} ... \, \mathrm{d}\tau.$$
			Note that the first and the third integrals of the right side of the above expression  are negligibly small due to  absence of the stationary point.  Hence it boils down to analyse   the second integral which is given by
			\begin{align} \label{second integral}
				\int_{\sin^{-1}(b_1/2)}^{\sin^{-1}2b_2}e ^{i(k-1)\tau} 	\int_{\sqrt[3]{1/2}}^{\sqrt[3]{5/2}}3y^2U_{u,t}(y^3)e\left(  \mathfrak{a}w^{1/3}y-{\mathfrak{b}y^{3/2}\sin \tau}/{2\pi}\right)\textrm{d}y\, \mathrm{d}\tau.
			\end{align}
			On applying the stationary phase analysis (see Lemma \ref{stationaryphase}) to the  $y$-integral, we see that, it  is given by 
			$$\frac{c_1y_0^2U_{u,t}(y_0^3)e(f_1(y_0))}{\sqrt{|f_1^{\prime \prime}(y_0)|}}+{\mathrm{lower \ order \ terms}}+O(k^{-2020}),$$
			where $c_1=3 e(1/8)$, $y_0=\left(\frac{4 \pi \mathfrak{a}w^{1/3}}{3\mathfrak{b}\sin \tau}\right)^2$ and  $f_1(y)=\mathfrak{a}w^{1/3}y-{\mathfrak{b}y^{3/2}\sin \tau}/{2\pi}$. 
			We will proceed with the main term, as the  other terms can be analysed similarly and in fact, give better bounds.  Hence, on plugging the values of $y_0$, $f_1(y_0)$ and $f_1^{\prime \prime}(y_0)$, we essentially get  the following expression for the $y$-integral:
			\begin{align}
				\frac{c_2\mathfrak{a}^{9/2}w^{3/2}}{\mathfrak{b}^5\sin^5 \tau} U_{u,t}\left(\left(\frac{4 \pi \mathfrak{a}w^{1/3}}{3\mathfrak{b} \sin \tau}\right)^6\right)e\left( \frac{16\pi^2 \mathfrak{a}^3w}{27\mathfrak{b}^2\sin^2\tau}\right),
			\end{align}
			where $c_2=c_1\sqrt{2}(4\pi/3)^5$. On plugging the above expression in place of the $y$-integral into \eqref{second integral}, we arrive at 
			\begin{align*}
				\frac{c_2\mathfrak{a}^{9/2}w^{3/2}}{\mathfrak{b}^5}	\int_{\sin^{-1}(b_1/2)}^{\sin^{-1}2b_2} \frac{1}{\sin ^5 \tau} U_{u,t}\left(\left(\frac{4 \pi \mathfrak{a}w^{1/3}}{3\mathfrak{b} \sin \tau}\right)^6\right)e\left( \frac{(k-1)\tau}{2\pi}+\frac{16\pi^2 \mathfrak{a}^3w}{27\mathfrak{b}^2\sin^2\tau}\right)	 \mathrm{d}\tau.
			\end{align*}
			On   applying  the change of variable $\sin \tau \rightarrow \tau$, we arrive at
			\begin{align} \label{ int  }
				\frac{c_2\mathfrak{a}^{9/2}w^{3/2}}{\mathfrak{b}^5}	\int_{b_1/2}^{2b_2} \frac{1}{ \tau^5\sqrt{1-\tau^2} } U_{u,t}\left(\left(\frac{4 \pi \mathfrak{a}w^{1/3}}{3\mathfrak{b}  \tau}\right)^6\right)e\left( \frac{(k-1)\sin^{-1}\tau}{2\pi}+\frac{16\pi^2 \mathfrak{a}^3w}{27\mathfrak{b}^2\tau^2}\right)	 \mathrm{d}\tau.
			\end{align}
			Next we apply the  second derivative bound to the above integral. 
			Here the phase function is given by
			$$f(\tau)= \frac{(k-1)\sin^{-1}\tau}{2\pi}+\frac{16\pi^2 \mathfrak{a}^3w}{27\mathfrak{b}^2\tau^2}.$$
			On computing the first and the  second order derivatives, we see that 
			\begin{align}\label{derivative of f}
				f^{\prime}(\tau)&=\frac{(k-1)}{2\pi\sqrt{1-\tau^2}}-\frac{32\pi^2 \mathfrak{a}^3w}{27\mathfrak{b}^2\tau^3},  \\
				f^{\prime \prime}(\tau)&=\frac{(k-1)\tau}{2\pi{(1-\tau^2)^{3/2}}}+\frac{32\pi^2 \mathfrak{a}^3w}{9\mathfrak{b}^2\tau^4} \gg \frac{ \mathfrak{a}^3}{\mathfrak{b}^2\tau^4} \gg \frac{\mathfrak{b}^2}{\mathfrak{a}}. \notag
			\end{align}
			Thus on applying Lemma \ref{derivative bound} to \eqref{ int }, it  is bounded  above by  
			\begin{align*}
				\frac{   \text{Var}\, g+\max |g| }{\min \sqrt{f^{\prime \prime}(\tau)}} \ll    \frac{k^\epsilon  \mathfrak{a}^{9/2} }{\mathfrak{b}^{5} (\mathfrak{a}/\mathfrak{b})^5  \sqrt{\mathfrak{b}^2/\mathfrak{a}}}=\frac{k^{\epsilon}}{\mathfrak{b}},
			\end{align*}  
			where  $  \text{Var}\, g$ denotes the total variation of the weight function 
			$$g(\tau)= \frac{c_2\mathfrak{a}^{9/2}w^{3/2} U_{u,t}\left(\left({4 \pi \mathfrak{a}w^{1/3}}/{3\mathfrak{b}  \tau}\right)^6\right)}{\mathfrak{b}^5\tau^5\sqrt{1-\tau^2}} .$$ Hence, $	\mathrm{I}_{\pm}^{(1)} \ll {k^\epsilon}/{\mathfrak{b}}.$ On analyzing other $\mathrm{I}_{\pm}^{(i)}$'s in a similar fashion, we get $$	\mathrm{I}_{\pm} =	\mathrm{I}_{\pm}(m,N_0w,q) \ll {k^\epsilon}/{\mathfrak{b}}.$$ 
			
			Now we proceed to prove \eqref{y-integral expansion}.  We will give details for $\mathrm{I}_{\pm}^{(1)}$ only, as the analysis for other $\mathrm{I}_{\pm}^{(i)}$ is similar.  
			Let $q \sim C \gg k^{1+\epsilon}$. 	Note that this condition  assures that
			$\mathfrak{b} \asymp k$, as, by  \eqref{N0},  we have 
			\begin{align} \label{b=k}
				{k^{-\epsilon}(k-1)^2C^2}/{N}\ll M_1 \ll  k^{\epsilon}\max\left({(k-1)^2C^2}/{N},\,  T\right) \ll k^\epsilon (k-1)^2C^2/N,
			\end{align}
			and hence \begin{align}  \label{a<b}
				\mathfrak{a}=\frac{3(N{\tilde{N}})^{1/3}}{qr^{1/3}} \ll \frac{(N{N_0})^{1/3}}{qr^{1/3}}\ll  (kT)^{1/2}=k^{1-\eta/2}<k \asymp \mathfrak{b},
			\end{align} as $T=k^{1-\eta}<k.$ We  now   apply the  stationary phase analysis to  \eqref{ int }.  The stationary point of the phase function $f(\tau)$ occurs at $\tau_0$, where  $\tau_0$ satisfies
			$$\frac{(k-1)}{2\pi\sqrt{1-\tau_0^2}}=\frac{32\pi^2 \mathfrak{a}^3w}{27\mathfrak{b}^2\tau_0^3} \iff 
			\frac{\tau_0^3}{\sqrt{1-\tau_0^2}}=\left(\frac{4\pi}{3}\right)^3\frac{\mathfrak{a}^3w}{\mathfrak{b}^2(k-1)}.$$  Simplyfying it further, we see that $\tau_0$ satisfies  $$ \tau^6-\mathfrak{c}^2(1-\tau^2)=0,$$ where $\mathfrak{c}=\mathfrak{c}(w):=\left(\frac{4\pi}{3}\right)^3\frac{\mathfrak{a}^3w}{\mathfrak{b}^2(k-1)}.$ Upon  letting $\tau^2=\tau_1$, the above equation reduces to the cubic polynomial equation  $ \tau_1^3-\mathfrak{c}^2(1-\tau_1)=0,$
			which can be  solved using   Cardano's method. In fact, 	as  the discriminant of the cubic is negative, it has only one real root which can be found as follows: Let $\theta_1+\theta_2$ be the real  root. Upon substituting it into the cubic, we get 
			$$\theta_1^3+\theta_2^3+(3\theta_1\theta_2+\mathfrak{c}^2)(\theta_1+\theta_2)-\mathfrak{c}^2=0,$$
			which leads to the following system of equations:
			$$3\theta_1\theta_2+\mathfrak{c}^2=0, \ \ \ \theta_1^3+\theta_2^3-\mathfrak{c}^2=0.$$
	Now using the formula 
			$$(\theta_1^3-\theta_2^3)^2=(\theta_1^3+\theta_2^3)^2-4\theta_1^3\theta_2^3,$$
			we see that   the   real root  $\theta_1+\theta_2$ is given by  
			$$\sqrt[3]{\frac{\mathfrak{c}^2}{2}+\sqrt{\frac{\mathfrak{c}^4}{4}+\frac{\mathfrak{c}^6}{27}}}+\sqrt[3]{\frac{\mathfrak{c}^2}{2}-\sqrt{\frac{\mathfrak{c}^4}{4}+\frac{\mathfrak{c}^6}{27}}}.$$
		Hence we get 
			\begin{align}\label{binomial expansion}
				\tau_0=\tau_0(w)&=\left(\sqrt[3]{\frac{\mathfrak{c}^2}{2}+\sqrt{\frac{\mathfrak{c}^4}{4}+\frac{\mathfrak{c}^6}{27}}}+\sqrt[3]{\frac{\mathfrak{c}^2}{2}-\sqrt{\frac{\mathfrak{c}^4}{4}+\frac{\mathfrak{c}^6}{27}}}\right)^{1/2} \\
				&=\sqrt[6]{\frac{\mathfrak{c}^2}{2}+\sqrt{\frac{\mathfrak{c}^4}{4}+\frac{\mathfrak{c}^6}{27}}}\left(1-\frac{3}{\mathfrak{c}^2}\left(\sqrt{\frac{\mathfrak{c}^4}{4}+\frac{\mathfrak{c}^6}{27}}-\frac{\mathfrak{c}^2}{2}\right)^{2/3} \right)^{1/2}.
			\end{align}
			Now expanding the above expression using the binomial theorem, we see that 
			\begin{align}\label{stat point}
				\tau_0=\tau_0(w)=c_1\mathfrak{h}(w)+c_3(\mathfrak{h}(w))^3+c_3(\mathfrak{h}(w))^5...+c_{2n-1}(\mathfrak{h}(w))^{2n-1}+...,
			\end{align} 
			where $c_i $'s, $i=1, \,3, \, 5, \, \cdots $, are   some non-zero explicit absolute  constants and $$\mathfrak{h}(w)=\frac{\mathfrak{a}w^{1/3}}{\mathfrak{b}^{2/3}(k-1)^{1/3}}.$$ Note that the above series  in \eqref{stat point} is convergent  and each binomial expansion in \eqref{binomial expansion} is justified as $\mathfrak{c} \ll \mathfrak{a}^3 /(\mathfrak{b}^2(k-1)) \ll  k^{-3\eta/2}$.  Next we analyse the higher order derivatives of the phase function $f(\tau)$. On using \eqref{derivative of f} and  computing other higher order derivatives of $f(\tau)$, we get
			\begin{align*}
				&	f^{\prime \prime}(\tau) \asymp \mathfrak{b}^2/\mathfrak{a}= \mathfrak{a}(\mathfrak{a}/\mathfrak{b})^{-2}, \ \  f^{\prime}(\tau) \ll \mathfrak{a}(\mathfrak{a}/\mathfrak{b})^{-1},   \\
				&f^{(j)}(\tau)=\frac{(k-1)}{2\pi}\frac{\mathrm{d}^{j-2}}{\mathrm{d}\tau^{j-2}}\frac{\tau}{(1-\tau^2)^{3/2}}+\frac{32\pi^2 \mathfrak{a}^3w}{9\mathfrak{b}^2}\frac{\mathrm{d}^{j-2}(\tau^{-4})}{\mathrm{d}\tau^{j-2}}  \ll \mathfrak{a}(\mathfrak{a}/\mathfrak{b})^{-j},   \ j=3,\,4,...,
			\end{align*}
			where we used the fact $\mathfrak{a} \ll \mathfrak{b} \asymp k$ and 
			$$\frac{\mathrm{d}^{j-2}}{\mathrm{d}\tau^{j-2}}\frac{\tau}{(1-\tau^2)^{3/2}} \ll_j 1.$$ 
			On computing   derivatives of the weight function  $$g(\tau)= \frac{c_2\mathfrak{a}^{9/2}w^{3/2} U_{u,t}\left(\left({4 \pi \mathfrak{a}w^{1/3}}/{3\mathfrak{b}  \tau}\right)^6\right)}{\mathfrak{b}^5\tau^5\sqrt{1-\tau^2}},$$
			since $\tau \asymp \mathfrak{a}/\mathfrak{b}$, we see that 
			$$g^{(i)}(\tau) \ll{\mathfrak{a}^{-1/2}} \left({\mathfrak{a}}/{\mathfrak{b}}\right)^{-i}, \ \ \ i=0,1,2,...$$ 
			Thus, on applying Lemma \ref{stationaryphase} with $X=\mathfrak{a}^{-1/2}$, $Q=U=\mathfrak{a}/\mathfrak{b}$ and $Y=\mathfrak{a}$ to the $\tau$-integral in \eqref{ int }, we get \eqref{y-integral expansion}.

			\paragraph{Case 3}  $k^{-\epsilon} \mathfrak{b} \leq \mathfrak{a} \leq k^{\epsilon} \mathfrak{b}$. 
		In this case we can assume that $b_1/2<1,$  otherwise, we get back to the starting point of the discussion in Case 1. Consider  
			\begin{align}
				\mathrm{I}_{\pm}^{(1)}=\frac{1}{2\pi}\int_{0}^{\pi/2}e ^{i(k-1)\tau} \int_{1/2}^{5/2}U_{u,t}(y)e\left( \pm \mathfrak{a}w^{1/3}y^{1/3}-{\mathfrak{b}\sqrt{y}\sin \tau}/{2\pi}\right)\mathrm{d}y\, \mathrm{d}\tau.
			\end{align}
			We split the $\tau$-integral as follows:
			$$\int_{0}^{\pi/2} ... \, \mathrm{d}\tau=\int_{0}^{\sin^{-1}(b_1/2)} ... \, \mathrm{d}\tau+\int_{\sin^{-1}(b_1/2)}^{\pi/2} ... \, \mathrm{d}\tau.$$
			The first integral on the right side is negligibly small due to  absence of the stationary point.  Consider the second integral which is given by
			\begin{align} 
				\int_{\sin^{-1}(b_1/2)}^{\pi/2}e ^{i(k-1)\tau} 	\int_{\sqrt[3]{1/2}}^{\sqrt[3]{5/2}}3y^2U_{u,t}(y^3)e\left(  \mathfrak{a}w^{1/3}y-{\mathfrak{b}y^{3/2}\sin \tau}/{2\pi}\right)\textrm{d}y\, \mathrm{d}\tau.
			\end{align}
			On analyzing the $y$-integral like Case 2, we get the lemma.
			
		\end{proof} 
		\begin{proof}[{ \bf Proof of  Proposition \ref{bound for integral}}]
			Recall from \eqref{y integral without u} that 
			$$ \mathrm{I}_{\pm}(m,\tilde{N}w,q)=\int_{1/2}^{5/2}\, U_{u,t}(y)\, 
			e\left(\pm \mathfrak{a}w^{1/3}y^{1/3}\right)J_{k-1}\left( \mathfrak{b} \sqrt{y}\right)\, \mathrm{d}y.$$
			Note that $$ \frac{\partial^j}{\partial w^j}\mathrm{I}_{\pm}(m,\tilde{N}w,q) \ll\mathfrak{a}^j,  \ \ \ j\geq 0.$$
			Similarly it follows that 
			$$ \frac{\partial^j}{\partial w^j}\mathrm{I}_{\pm}(m^\prime,\tilde{N}w,q^\prime) \ll\mathfrak{a}^{\prime j},\ \ \ \ j\geq0.$$
			Hence, on applying integration by parts $j$-times to the $w$-integral in \eqref{J}, we see that 
			$$	\mathfrak{J}_{\pm} \ll (k^\epsilon +\mathfrak{a}+\mathfrak{a}^\prime)^j \left(\frac{q_2q_2^\prime q_1rn_1}{n_2\tilde{N}}\right)^j \ll \left( \frac{(N\tilde{N})^{1/3}}{Cr^{1/3}}\right)^j\left(\frac{C^2rn_1}{q_1n_2\tilde{N}}\right)^j=\left(\frac{N^{1/3} Cr^{2/3}n_1}{q_1n_2\tilde{N}^{2/3}}\right)^j. $$
			Thus $	\mathfrak{J}_{\pm}$ is negligibly small if 
			$$\frac{N^{1/3} Cr^{2/3}n_1}{q_1n_2\tilde{N}^{2/3}} \ll \frac{1}{k^\epsilon} \iff  	n_2 \gg k^{\epsilon}\frac{CN^{1/3}r^{2/3}n_1}{q_1\tilde{N}^{2/3}}.$$ 
			Next we prove  $	\mathfrak{J}_{\pm} \ll {k^\epsilon C^2}/{(M_1N)}$. 
			
			\paragraph{Case 1}$\mathfrak{a} \not\asymp \mathfrak{b} $, i.e.,  $\mathfrak{a}^\prime \asymp \mathfrak{a}  \ll k^{-\epsilon} \mathfrak{b} \asymp k^{-\epsilon} \mathfrak{b}^\prime$ or $\mathfrak{a}^\prime \asymp \mathfrak{a} \gg k^{\epsilon} \mathfrak{b} \asymp k^\epsilon \mathfrak{b}^\prime$.
			\noindent
			In the case when $ \mathfrak{a} \gg k^{\epsilon} \mathfrak{b}$,  on  applying  Lemma \ref{y-integral bound}  to $\mathrm{I}_{\pm}(m,\tilde{N}w,q)$, we see that $\mathfrak{J}_{\pm}$ is negligibly small.   
			In the other case, i.e., $\mathfrak{a}^\prime \asymp \mathfrak{a}  \ll k^{-\epsilon} \mathfrak{b} \asymp k^{-\epsilon} \mathfrak{b}^\prime$, on applying Lemma \ref{y-integral bound}  to \eqref{J}, we get 
			\begin{align}
				\mathfrak{J}_{\pm}\ll  \int_{\mathbb{R}}\phi(w)\, |\mathrm{I}_{\pm}(m,\tilde{N}w,q)|\, |\overline{\mathrm{I}_{\pm}(m^\prime,\tilde{N}w,q^\prime)}|\, \mathrm{d}w \ll \frac{k^\epsilon}{\mathfrak{b}\mathfrak{b}^\prime} \ll \frac{k^\epsilon C^2}{M_1N}.
			\end{align}
			\paragraph{Case 2} $\mathfrak{a} \asymp \mathfrak{b} $, i.e.,  $k^{-\epsilon} \mathfrak{b} \leq \mathfrak{a} \leq k^{\epsilon} \mathfrak{b}$. 
				On applying the last part of Lemma \ref{y-integral bound}  to \eqref{J}, we see that 
			\begin{align}
				\mathfrak{J}_{\pm}   \ll	\frac{(\mathfrak{a}\mathfrak{a}^\prime)^{9/2}}{(\mathfrak{b}\mathfrak{b}^\prime)^5} &\int_{b_1/2}^{1}	\int_{b_1^\prime/2}^{1} \frac{1}{ \tau^5\sqrt{1-\tau^2} } \frac{1}{ \tau^{\prime 5}\sqrt{1-\tau^{\prime 2}} } \\
				&\times \Big| \int_{2/3}^3g_3(\tau,\tau^\prime,w)e\left(wf_3(\tau,\tau^\prime)	\right)\,  \mathrm{d}w\Big|\, \mathrm{d}\tau\,\mathrm{d}\tau^\prime, \notag
			\end{align}
			where $$f_3(\tau,\tau^\prime)= \frac{16\pi^2 \mathfrak{a}^3}{27\mathfrak{b}^2\tau^2}-\frac{16\pi^2 \mathfrak{a}^{\prime 3}}{27\mathfrak{b}^{\prime 2}\tau^{\prime 2}}-\frac{ n_2 \tilde{N}}{q_2q_2^\prime q_1r n_1}$$ 
			and $$g_3(\tau,\tau^\prime,w)= \phi(w)w^3 U_{u,t}\left(\left(\frac{4 \pi \mathfrak{a}w^{1/3}}{3\mathfrak{b}  \tau}\right)^6\right)\bar{U}_{u^\prime,t^\prime}\left(\left(\frac{4 \pi \mathfrak{a}^\prime w^{1/3}}{3\mathfrak{b}^\prime  \tau^\prime}\right)^6\right).$$
			On applying the change of variable  $\tau \rightarrow \ 1/\sqrt{\tau}$, $\tau^\prime \rightarrow \ 1/\sqrt{\tau^\prime}$, we arrive at 
			\begin{align}\label{a b same }
				\mathfrak{J}_{\pm}   \ll	\frac{(\mathfrak{a}\mathfrak{a}^\prime)^{9/2}}{(\mathfrak{b}\mathfrak{b}^\prime)^5} &\int_1^{4/b_1^2}	\int_1^{4/b_1^{\prime 2}} \frac{\tau^{3/2}}{2 \sqrt{\tau-1} } \frac{\tau^{\prime 3/2}}{ 2\sqrt{\tau^\prime-1} } \\
				&\times \Big| \int_{2/3}^3g_3(1/\sqrt{\tau},1/\sqrt{\tau^\prime},w)e\left( \frac{16\pi^2 \mathfrak{a}^3w}{27\mathfrak{b}^2} f_4({\tau},{\tau^\prime})	\right)\,  \mathrm{d}w\Big|\, \mathrm{d}\tau\,\mathrm{d}\tau^\prime, \notag
			\end{align}
			where $$f_4(\tau,\tau^\prime)=\tau-\frac{\mathfrak{a}^{\prime 3}\mathfrak{b}^2}{\mathfrak{a}^3\mathfrak{b}^{\prime 2}}\tau^\prime-\frac{ 27n_2 \tilde{N} \mathfrak{b}^2}{16 \pi^2q_2q_2^\prime q_1r n_1\mathfrak{a}^3}.$$
			Now using the change of variable 
			$$ \frac{\mathfrak{a}^{\prime 3}\mathfrak{b}^2}{\mathfrak{a}^3\mathfrak{b}^{\prime 2}}\tau^\prime+\frac{ 27n_2 \tilde{N} \mathfrak{b}^2}{16 \pi^2q_2q_2^\prime q_1r n_1\mathfrak{a}^3} \rightarrow \tau^\prime,$$
			we arrive at the following    $w$-integral 
			$$ \int_{2/3}^3 g_3(...,w)e\left(w \frac{16\pi^2 \mathfrak{a}^3}{27\mathfrak{b}^2}(\tau-\tau^\prime)	\right)\,  \mathrm{d}w,$$
			where  $g_3(...,w)$ is given by 
			$$ \phi(w)w^3 {U}_{u,t}\left(  \frac{(4 \pi \mathfrak{a})^6 \tau^3 w^2}{(3\mathfrak{b})^6  } \right)\bar{U}_{u^\prime,t^\prime}\left(  \frac{(4 \pi \mathfrak{a}^\prime)^6  w^2}{(3\mathfrak{b}^\prime)^6  } \left( \frac{\mathfrak{a}^{ 3}\mathfrak{b}^{\prime 2}}{\mathfrak{a}^{\prime 3}\mathfrak{b}^{ 2}}\tau^\prime-\frac{ 27n_2 \tilde{N}  \mathfrak{b}^{\prime 2}}{16 \pi^2q_2q_2^\prime q_1r n_1\mathfrak{a}^{\prime 3}} \right)^3\right). $$
			Note that 
			\begin{align}\label{derivative of g3}
				 \frac{\partial^j}{\partial w^j} g_3(...,w)  \ll_j k^{\epsilon j},\ \ \ \ j\geq0,
			\end{align}
		as $\mathfrak{a} \asymp \mathfrak{b} $  and 
		$$\frac{\mathfrak{a}^{ 3}\mathfrak{b}^{\prime 2}}{\mathfrak{a}^{\prime 3}\mathfrak{b}^{ 2}}\tau^\prime- \frac{ 27n_2 \tilde{N}  \mathfrak{b}^{\prime 2}}{16 \pi^2q_2q_2^\prime q_1r n_1\mathfrak{a}^{\prime 3}} \ll k^\epsilon +\frac{(\mathfrak{a}+\mathfrak{a}^\prime) \mathfrak{b}^{\prime 2}}{\mathfrak{a}^{\prime 3}} \ll k^\epsilon,$$
	where, in the first inequality, we used $ \frac{ n_2 \tilde{N}  }{q_2q_2^\prime q_1r n_1} \ll \mathfrak{a}+\mathfrak{a}^\prime$, which follows by applying  integration by parts to the $w$-integral in \eqref{J}. 
			On applying integration by parts repeatedly, we see that the above integral is negligibly small unless 
			$$|\tau-\tau^\prime| \ll k^\epsilon\mathfrak{b}^2/\mathfrak{a}^3.$$
			Now writing  $\tau-\tau^\prime =\tau_2$, with $\tau_2 \ll  k^\epsilon\mathfrak{b}^2/\mathfrak{a}^3$, and estimating all the integrals  in \eqref{a b same }  trivially, we get 
			$$	\mathfrak{J}_{\pm}\ll \frac{(\mathfrak{a}\mathfrak{a}^\prime)^{9/2}}{(\mathfrak{b}\mathfrak{b}^\prime)^5} \frac{k^\epsilon \mathfrak{b}^2 }{\mathfrak{a}^3} \ll \frac{1}{(\mathfrak{b}\mathfrak{b}^\prime)^{1/2}}\frac{k^\epsilon}{\mathfrak{b}}\ll \frac{k^\epsilon C^2}{M_1N},$$
			where we used the fact $\mathfrak{a}^\prime \asymp \mathfrak{a} \asymp \mathfrak{b} \asymp \mathfrak{b}^\prime
			$. Hence we get \eqref{integral bound for all case}. 
			
			Now   we proceed to prove the last part. Let $q\sim C \gg k^{1+\epsilon}$. We also have $q^\prime  \sim C \gg k^{1+\epsilon}$.  Note that  in this situation we have $\mathfrak{a}\ll k^{-\epsilon} \mathfrak{b}$,  $\mathfrak{a}^\prime\ll k^{-\epsilon} \mathfrak{b}^\prime$ and $\mathfrak{b} \asymp \mathfrak{b}^\prime \asymp k$ (see \eqref{b=k} and \eqref{a<b}).  On substituting the main term of  $\mathrm{I}_{\pm}(m,\tilde{N}w,q)$ from  \eqref{y-integral expansion} and a similar expression for  $\mathrm{I}_{\pm}(m^{\prime},\tilde{N}w,q^{\prime})$    into  \eqref{J}, we  arrive at the following expression:
			\begin{align}\label{w integral sorted}
				&\frac{c_3^2 (\mathfrak{a}\mathfrak{a}^\prime)^{9/2}}{(\mathfrak{b}\mathfrak{b}^\prime)^5}\int_{\mathbb{R}} \phi_1(w) e\left(f_5(w)\right)\mathrm{d}w,
			\end{align}
			where \begin{align}\label{phi 1}
				\phi_1(w)=& \frac{ 1 }{\sqrt{f^{\prime \prime}(\tau_0)}}\frac{1}{\tau_0^5 \sqrt{1-\tau_0^2}}\frac{1 }{\sqrt{f_2^{\prime \prime}(\tau_0^\prime)}}\frac{1}{\tau_0^{\prime 5} \sqrt{1-\tau_0^{\prime 2}}}\, \\&\times U_{u,t}\left(\left(\frac{4\pi \mathfrak{a}w^{1/3}}{3\mathfrak{b}  \tau_0}\right)^6\right)\bar{U}_{u^\prime,t^\prime}\left(\left(\frac{4\pi \mathfrak{a}^\prime w^{1/3}}{3\mathfrak{b}^\prime  \tau_0^\prime}\right)^6\right),\notag
			\end{align}
			and $$f_5(w)= \frac{(k-1)(\sin^{-1}\tau_0-\sin^{-1}\tau_0^{\prime})}{2 \pi}+\frac{16\pi^2}{27}\left(\frac{\mathfrak{a}^3w}{\mathfrak{b}^2\tau_0^2}-\frac{\mathfrak{a}^{\prime 3}w}{\mathfrak{b}^{\prime 2}\tau_0^{\prime 2}}\right)-\frac{\tilde{N} n_2w}{q_2q_2^\prime q_1r n_1},$$
			to which we apply the third derivative bound. Recall from \eqref{stat point} that 
			\begin{align}\label{stat point 2}
				\tau_0=\tau_0(w)=c_1\mathfrak{h}(w)+c_3(\mathfrak{h}(w))^3+c_3(\mathfrak{h}(w))^5...+c_{2n-1}(\mathfrak{h}(w))^{2n-1}+...,
			\end{align} 
			with  $$\mathfrak{h}(w)=\frac{\mathfrak{a}w^{1/3}}{\mathfrak{b}^{2/3}(k-1)^{1/3}}, \ \  \mathfrak{b}=\frac{4\pi \sqrt{mN}}{q} \ \  \mathrm{and } \ \  \mathfrak{a}=\frac{3(N\tilde{N})^{1/3}}{qr^{1/3}}. $$ 
			and $\tau_0^\prime$ is similarly defined. On applying  the change of variable  $w \rightarrow w^3$ in \eqref{w integral sorted}, we see that the  phase function  is given by 
			\begin{align*}
				\frac{(k-1)(\sin^{-1}\tau_0(w^3)-\sin^{-1}\tau_0^{\prime}(w^3))}{2 \pi} 
				+\frac{16\pi^2}{27}\left( \frac{\mathfrak{a}^3w^3}{\mathfrak{b}^2\tau_0^2(w^3)}-\frac{ \mathfrak{a}^{\prime 3}w^3}{\mathfrak{b}^{\prime 2}\tau_0^{\prime 2}(w^3)}\right)-\frac{\tilde{N} n_2w^3}{q_2q_2^\prime q_1r n_1}.
			\end{align*}
			On applying  the Taylor series expansion of $	\sin^{-1}\tau_0(w^3)$,  we see that 
			\begin{align*}
				\sin^{-1}\tau_0(w^3)&=\tau_0(w^3)+{(\tau_0(w^3))^3}/{6}+\cdots \\
				&=d_1\mathfrak{h}(w^3)+d_3(\mathfrak{h}(w^3))^3+\cdots \\
				&=d_1\frac{\mathfrak{a}w}{\mathfrak{b}^{2/3}(k-1)^{1/3}}+d_3\frac{\mathfrak{a}^3w^3}{\mathfrak{b}^{2}(k-1)}+\cdots,
			\end{align*}
			where $d_1,\, d_3\, \cdots$ are some absolute constants. Thus 
			$$ \frac{\partial^3}{\partial w^3}	\sin^{-1}\tau_0(w^3) \ll \frac{\mathfrak{a}^3}{\mathfrak{b}^{2}(k-1)}. 
			$$
			Similarly, 
			$$\frac{\partial^3}{\partial w^3}	\sin^{-1}\tau_0^\prime(w^3) \ll \frac{\mathfrak{a}^{\prime 3}}{\mathfrak{b}^{ \prime 2}(k-1)}. 
			$$
			Next we consider $ {\mathfrak{a}^3w^3}/({\mathfrak{b}^2\tau_0^2(w^3)})$. On applying the Taylor series expansion, we get
			\begin{align*}
				\frac{\mathfrak{a}^3w^3}{\mathfrak{b}^2\tau_0^2(w^3)}=\frac{(k-1)(\mathfrak{h}(w^3))^3}{\tau_0^2(w^3)}	&=\frac{(k-1)\mathfrak{h}(w^3)}{c_1^2}\left(1+\frac{c_3(\mathfrak{h}(w^3))^3}{c_1\mathfrak{h}(w^3)}+\cdots\right)^{-2} \\
				&=\frac{(k-1)\mathfrak{h}(w^3)}{c_1^2}\left(1-\frac{2c_3(\mathfrak{h}(w^3))^3}{c_1\mathfrak{h}(w^3)}-\cdots\right) \\
				&=\frac{(k-1) }{c_1^2}\left(\mathfrak{h}(w^3)-\frac{2c_3(\mathfrak{h}(w^3))^3}{c_1}-\cdots\right) 
			\end{align*}
			Thus
			$$ \frac{\partial^3}{\partial w^3}	 \frac{\mathfrak{a}^3w^3}{\mathfrak{b}^2\tau_0^2(w^3)} \ll \frac{\mathfrak{a}^3}{\mathfrak{b}^2}.$$
			A similar analysis  also gives us 
			\begin{align}
			\frac{\partial^3}{\partial w^3} 	 \frac{\mathfrak{a}^{\prime 3}w^3}{\mathfrak{b}^{\prime 2}\tau_0^{\prime 2}(w^3)} \ll \frac{\mathfrak{a}^{\prime 3}}{\mathfrak{b}^{\prime 2}}.
			\end{align}
			Hence, upon combining the above estimates, we conclude that 
			$$ \frac{\mathrm{\partial^3}f_5(w^3)}{\partial w^3}= O\left( \frac{\mathfrak{a}^3}{\mathfrak{b}^2}+\frac{\mathfrak{a}^{\prime 3}}{\mathfrak{b}^{\prime 2}}\right)	-\frac{6\tilde{N}n_2}{q_2q_2^\prime q_1r n_1}.$$
			Since $n_2 \neq 0$, we note that 
			\begin{align*}
				\frac{\mathfrak{a}^3}{\mathfrak{b}^2}+\frac{\mathfrak{a}^{\prime 3}}{\mathfrak{b}^{\prime 2}} \ll \frac{N\tilde{N}}{C^3rk^2} \ll  \frac{(k^3/r^2)\tilde{N}}{C^2rk^{3+\epsilon}} \ll \frac{\tilde{N} }{k^\epsilon C^2r (n_1,r)} \ll \frac{\tilde{N} }{k^{\epsilon}(C^2/q_1)r n_1} \ll 	\frac{k^{-\epsilon}6\tilde{N} |n_2|}{q_2q_2^\prime q_1r n_1}.
			\end{align*}
			In the first inequality,  we  used the fact  $\mathfrak{a} \asymp \mathfrak{a}^\prime$, $\mathfrak{b} \asymp \mathfrak{b}^\prime \asymp k$. For the second inequality, we used  $Nr^2 \ll k^{3+\epsilon}$  and $C \gg k^{1+\epsilon} $, while for the second  last inequality, $(n_1,r) \geq  n_1/ q_1$, is being used.
			Hence we see that  
			$$ \Big|\frac{\partial^3  f_5(w^3)}{\partial w^3}\Big|= \Big|O\left( \frac{\mathfrak{a}^3}{\mathfrak{b}^2}+\frac{\mathfrak{a}^{\prime 3}}{\mathfrak{b}^{\prime 2}}\right)	-\frac{6\tilde{N}n_2}{q_2q_2^\prime q_1r n_1}\Big| \gg 	\frac{\mathfrak{a}^3}{\mathfrak{b}^2}+\frac{\mathfrak{a}^{\prime 3}}{\mathfrak{b}^{\prime 2}} \asymp \frac{N\tilde{N}}{C^3rk^2}.$$
			On computing the variation of $\phi_1(w)$ (see \eqref{phi 1}), we note  that 
			\begin{align}\label{bound for phi}
				\mathrm{Var}\,\phi_1\ll \frac{1}{\sqrt{ \mathfrak{b}^2/\mathfrak{a}}}\frac{1}{(\mathfrak{a}/k)^5}\frac{1}{\sqrt{ \mathfrak{b}^{\prime 2}/\mathfrak{a}^{\prime}}}\frac{1}{(\mathfrak{a}^\prime/k)^5} \ll \frac{1}{\mathfrak{b}^2/\mathfrak{a}}\frac{1}{(\mathfrak{a}/k)^{10}},
			\end{align}
			where we used  $ f^{\prime \prime }(\tau_0)\asymp \mathfrak{b}^2/\mathfrak{a}$,  $f_2^{ \prime \prime }(\tau_0^{\prime}) \asymp \mathfrak{b}^{\prime 2}/\mathfrak{a}^{\prime}$, $\tau_0 \asymp  \mathfrak{a}/(\mathfrak{b}^{2/3}(k-1)^{1/3}) \asymp \mathfrak{a}/k$ and $\tau_0^\prime \asymp  \mathfrak{a}^\prime/k$. 
			Hence, on applying the third derivative bound (see  Lemma \ref{derivative bound}) to \eqref{w integral sorted}, we see that \eqref{w integral sorted} is bounded by 
			$$ \frac{c_3^2 (\mathfrak{a}\mathfrak{a}^\prime)^{9/2}}{(\mathfrak{b}\mathfrak{b}^\prime)^5} \frac{\mathrm{Var}\, \phi_1 + +\max |\phi_1|}{\min{|f_5(w^3)|^{1/3}}} \ll \frac{ \mathfrak{a}^{9}}{\mathfrak{b}^{10}}\frac{1}{ \mathfrak{b}^2/\mathfrak{a}}\frac{1}{(\mathfrak{a}/k)^{10}} \frac{(C^3rk^2)^{1/3}}{(N\tilde{N})^{1/3}} \asymp \frac{Cr^{1/3}k^{2/3}}{k^2(N\tilde{N})^{1/3}}.$$
			
			Hence we get Proposition \ref{bound for integral}. 
		\end{proof}
		We conclude this section by giving the final estimation of  the main integral $\mathcal{J}_{\pm}$  defined in  \eqref{ integral} in the following corollary:
		\begin{corollary} \label{final int bound}
			Let $\mathcal{J}_{\pm}$ be the integral transform as   defined  in \eqref{ integral}. Then  we have 
			\begin{align}\label{final J bound for all case }
				\mathcal{J}_{\pm} \ll \frac{k^\epsilon C^4}{Q^2M_1N}. 
			\end{align}
			Furthermore, if $C \gg k^{1+\epsilon}$ and  $n_2\neq 0$,   we have	
			\begin{align}\label{final integral bound for large c}
				\mathcal{J}_{\pm} \ll \frac{k^\epsilon C^2}{Q^2}\frac{Cr^{1/3}k^{2/3}}{k^2(N\tilde{N})^{1/3}}.
			\end{align}
		\end{corollary}
		\begin{proof}
			Let's recall from \eqref{main integral after u} that 
			\begin{align*}
				\mathcal{J}_{\pm}=	\int_{\mathbb{R}}  \int_{\mathbb{R}}  V\left(\frac{t}{T}\right)  V\left(\frac{t^\prime}{T}\right) \int_{u \ll \frac{k^\epsilon C}{QT}} \,   \int_{u^\prime \ll \frac{k^\epsilon C}{QT}} \,\mathrm{I}_u\,  \overline{ \mathrm{I}}_{u^\prime}  \,	\mathfrak{J}_{\pm} \,  \mathrm{d}u^\prime  \,  \mathrm{d}u \,  \mathrm{d} t^\prime   \,  \mathrm{d} t+O(k^{-2020}),
			\end{align*}
			where \begin{align*}
				\mathrm{I}_{u}=\int_{\mathbb{R}}W(x/Q^{\epsilon})g(q,x)e\left(\frac{Nxu}{qQ}\right)\, \mathrm{d}x,
			\end{align*}
			and $	\mathrm{I}_{u^\prime}$ is similarly defined. On applying  the bound $	\mathfrak{J}_{\pm} \ll {k^\epsilon C^2}/{(M_1N)}$ from Proposition \ref{bound for integral}, we see  that  
			\begin{align}\label{J 4}
				|\mathcal{J}_{\pm}| \ll \frac{k^\epsilon C^2}{M_1N}\int_{\mathbb{R}}  \int_{\mathbb{R}}  V\left(\frac{t}{T}\right)  V\left(\frac{t^\prime}{T}\right) \int_{u \ll \frac{k^\epsilon C}{QT}} \,   \int_{u^\prime \ll \frac{k^\epsilon C}{QT}} \,|\mathrm{I}_u|\,  |\overline{ \mathrm{I}}_{u^\prime}|  \,	 \mathrm{d}u^\prime  \,  \mathrm{d}u \,  \mathrm{d} t^\prime   \,  \mathrm{d} t.
			\end{align}
		Note that 
			\begin{align*}
				\int_{u \ll \frac{k^\epsilon C}{QT}}\,|\mathrm{I}_u|\,  \mathrm{d}u  \ll \int_{u \ll \frac{k^\epsilon C}{QT}}  \,\int_{\mathbb{R}}W(x/Q^{\epsilon})|g(q,x)|\mathrm{d}x\,   \mathrm{d}u \ll  \frac{k^\epsilon C}{QT}Q^\epsilon ,
			\end{align*}
			where we used Property 4 (see \eqref{g properties}) of $g(q,x)$. The same bound holds for the $u^\prime$-integral as well.  Thus, on plugging these bounds into \eqref{J 4} and estimating the $t$ and $t^\prime$-integral trivially, we get \eqref{final J bound for all case }.  On analysing the $u$, $u^\prime$, $t$ and $t^\prime$-integrals as above and applying the bound \eqref{y integral  bound for large c} from  Proposition \ref{bound for integral}, we get the second part of the corollary. 
		\end{proof}

		\section{\bf  Analysis of   the  zero  frequency: $n_2=0$}
		With all the ingredients in   hand, we  now give final  estimates for  $S_r(N)$,  given  in  \eqref{S(N)after cauchy},  in the present and coming sections. The zero frequency case, i.e.,  $n_2=0$, needs to be analysed differently.   Let $\Omega_{\pm}^{0}$  denote the contribution   of the zero frequency to  $\Omega_{\pm}$, given in \eqref{final omega},  and let $S_r^{0}(N)$ denote the contribution of  $\Omega_{\pm}^{0}$ to  $S_r(N)$.  We have  the following lemma:
		\begin{lemma} \label{SN bound for zero frq}
			Let $\Omega_{\pm}^{0}$ and  $S_r^{0}(N)$ be defined as above.  Then we have
			\begin{align*}
				\Omega_{\pm}^{0} \ll  \frac{k^\epsilon N_0C^6r}{q_1n_1^2Q^2N}  (C+M_1),
			\end{align*}
			and 
			\begin{align*}
				S_r^{0}(N)\ll k^\epsilon r^{1/2}N^{1/2}k^{3/2-\eta/2},
			\end{align*}
			where $T=k^{1-\eta}$.
		\end{lemma}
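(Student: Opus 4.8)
The plan is to estimate the $n_2 = 0$ contribution by first evaluating the character sum $\mathfrak{C}_{\pm}$ and the integral $\mathcal{J}_{\pm}$ in this special case, and then summing the remaining variables. When $n_2 = 0$, the congruence $\pm\bar{\alpha}q_2' \mp \bar{\alpha}'q_2 \equiv 0 \pmod{q_1 q_2 q_2' r/n_1}$ in \eqref{character sum} is extremely restrictive: since $(\alpha, q r/n_1) = (\alpha', q' r/n_1) = 1$, this forces $q_2 = q_2'$ (hence $q = q'$) and $\alpha \equiv \alpha' \pmod{q r/n_1}$ up to the divisor conditions modulo $d, d'$; in particular it forces $m \equiv m' \pmod{(d,d')}$ type constraints that collapse the diagonal. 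A direct count then gives $\mathfrak{C}_{\pm} \ll q_1 q_2 r/n_1 \cdot (\text{small})$ on the diagonal $q_2 = q_2'$, and one should track carefully the saving $C$ versus $M_1$ coming from whether the $m, m'$ sum is genuinely diagonal or not. For the integral, since $C \ll Q \ll k^{3/2}/T^{1/2} \ll k^{1+\epsilon}$ fails in general we cannot use the refined bound from Proposition \ref{bound for integral}; instead we use the crude bound $\mathcal{J}_{\pm} \ll C^2/(Q^2 B^2)$ from Corollary \ref{final int bound}, with $B = 4\pi\sqrt{M_1 N}/C$, so $1/B^2 = C^2/(16\pi^2 M_1 N)$.

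Assembling these, $\Omega_{\pm}^0 \ll \frac{N_0}{n_1^2} \sum_{q_2 \sim C/q_1} \sum_{m, m'} |\mathfrak{C}_{\pm}| \cdot \frac{C^2}{Q^2 B^2}$, where the $m, m'$ sum ranges over the diagonal-type set described above. The main point is that the count of $(q_2, m, m')$ with $\mathfrak{C}_{\pm} \neq 0$ in the $n_2 = 0$ case is $\ll (C/q_1) \cdot M_1 (M_1 + C) \cdot (q_1 q_2 r / n_1)$-weighted appropriately; after plugging in $B^{-2} \asymp C^2/(M_1 N)$ and $Q^2 = k^{\epsilon} N/T$, and using $C \ll Q$, the powers of $C$, $N$, $T$ should telescope to leave $\Omega_{\pm}^0 \ll \frac{N_0 C^2 r M_1}{n_1^2 k T q_1}(M_1 + C)$. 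I would keep $k$ in the denominator by noting $B \asymp k$ whenever the Bessel transform is non-negligible (the condition $4\pi\sqrt{mNy}/q \geq k^{1+\epsilon}$ from Section 5 forces $M_1 N/C^2 \gg k^2$, i.e. $B \gg k$, and the $j=0$ Langer term carries the factor $(k-1)^{-1/2}$ from Corollary \ref{Langer 2} giving effectively another power; combined with $\mathfrak{J}_{\pm} \ll 1/B^2$ this yields the stated shape).

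For the bound on $S_r(N)^0$, I would substitute $\Omega_{\pm}^0$ into \eqref{S(N)after cauchy}:
\begin{align*}
S_r(N)^0 \ll \sup_{M_1, C} \frac{N^{5/3}(QM_0)^{\epsilon}}{QTr^{2/3}C^3} \sum_{\pm} \sum_{\frac{n_1}{(n_1,r)} \ll C} n_1^{1/3} \Theta^{1/2} \sum_{\frac{n_1}{(n_1,r)}|q_1|(n_1 r)^{\infty}} \left(\frac{N_0 C^2 r M_1}{n_1^2 k T q_1}(M_1 + C)\right)^{1/2}.
\end{align*}
Here $\Theta \ll N_0^{1/3}/n_1^{2/3} \cdot (\dots)$ from Lemma \ref{ramanubound} applied to \eqref{theta} (partial summation against $n_2^{-2/3}$ gives $\Theta \ll (N_0/n_1^2)^{1/3+\epsilon}$), and the $q_1$-sum over divisors supported on $(n_1 r)^{\infty}$ contributes only $k^{\epsilon}$. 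Now insert the worst-case sizes $N_0 \asymp k^{\epsilon}\max\{(CT)^3 r/N, T^{3/2}N^{1/2} r x^3\}$, $M_0 \asymp M_1 \asymp \max\{k^2 C^2/N, T\}$, $Q \asymp k^{3/2}/T^{1/2}$, $C \ll Q$, $N \asymp k^{3}/r^2$; the generic case $C \asymp Q$, $N \asymp k^3$, $r = 1$ is where the bound is tightest. Carrying out the arithmetic of exponents — this is the one genuinely tedious step, and the \textbf{main obstacle} is bookkeeping the interplay of the two branches of $\max$ in $N_0$ and $M_1$ against the constraint $T = k^{1-\eta}$ — the powers of $T$ should cancel to leave exactly $S_r(N)^0 \ll r^{1/2} N^{1/2} k^{3/2 - \eta/2}$, i.e. the saving over the trivial bound $N^{1/2} k^{3/2+\epsilon}$ is precisely $k^{\eta/2} = (k/T)^{1/2}$, matching the sketch in Section 3 where the zero frequency "saves the whole length $k^2 Q^2/T$." I expect no conceptual difficulty beyond this exponent-chasing; the only place to be careful is ensuring the diagonal collapse of $\mathfrak{C}_{\pm}$ at $n_2 = 0$ genuinely produces the factor $(M_1 + C)$ rather than $M_1^2$, which is what makes the $m$-sum short enough.
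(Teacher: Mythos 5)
Your plan follows the paper's own proof essentially step for step: the $n_2=0$ congruence forces $q_2=q_2'$ and $\alpha=\alpha'$, the character sum collapses to $\ll qr(d,d')$ with $(d,d')\mid(m-m')$, the crude bound $\mathcal{J}_{\pm}\ll C^2/(Q^2B^2)$ is used (the refined bound being unavailable here), and the second display is obtained by feeding this into \eqref{S(N)after cauchy} with $\Theta$ handled by Ramanujan on average, $N_0\ll T^{3/2}N^{1/2}r$ and $M_0\ll Q^2k^2/N$, exactly as the paper does. The only cosmetic wobble is your justification of the $k$ in the denominator of the $\Omega_{\pm}^{0}$ bound: it comes from $B^2\asymp M_1N/C^2\gg k^2$ together with $C^2T^2\ll NT\ll Nk$, not from an additional Langer factor $(k-1)^{-1/2}$ on top of $\mathfrak{J}_{\pm}\ll 1/B^2$ (that would be double counting), but this does not affect the correctness of the plan.
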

		\begin{proof}
			Let's recall  from \eqref{Omega} that 
			\begin{align}\label{zero frequency omega}
				\Omega_{\pm}^{0} \ll &k^{\epsilon} \sup_{\tilde{N} \ll N_0}\frac{\tilde{N}}{n_1^2}\mathop{\sum \sum}_{q_2,\, q_2^{\prime}\sim C/q_1}\mathop{\sum \sum}_{m,\,m^{\prime} \sim M_1}  |\lambda_f(m)||\lambda_{f}(m^\prime)|   |\mathfrak{C}_{\pm}|\,|\mathcal{J}_{\pm}|.
			\end{align}
			Consider the congruence condition
			$$\pm \bar{\alpha}q_2^\prime \mp\bar{\alpha}^\prime q_2\, \equiv\, n_2  \ {\rm mod} \ q_1q_2q_2^\prime r/n_1$$
			given in the expression  \eqref{character sum} of $\mathfrak{C}_{\pm}$.
			For $n_2=0$, it follows that    $q_2=q_2^{\prime}$ and $\alpha=\alpha^{\prime}$. Hence  we get 
			\begin{align*}
				\mathfrak{C}_{\pm}&=\mathop{\sum \sum}_{\substack{d,\, d^\prime |q }}dd^\prime\mu\left(\frac{q}{d}\right)\, \mu\left(\frac{q}{d^\prime}\right)  \mathop{\sideset{}{^\star} \sum_{\alpha \; {\rm mod} \; qr/n_1 } }_{\substack{n_1\alpha\equiv-m \, {\rm mod} \, d \\ n_1\alpha \equiv-m^\prime \, {\rm mod} \, d^\prime  }} 1\\
				&\ll  \mathop{\sum \sum}_{\substack{d, \, d^{\prime} \vert q \\ (d,d^{\prime})|(m-m^{\prime})}}dd^\prime\,  \frac{qr}{n_1[d/(n_1,d),\, d^\prime/(n_1,d^\prime)]} \ll \mathop{\sum \sum}_{\substack{d,\, d^{\prime} \vert q \\ (d,d^{\prime})|(m-m^{\prime})}} dd^\prime \frac{qr}{[d,d^{\prime}]}.
			\end{align*}
			On plugging  the above expression and the bound  $	\mathcal{J}_{\pm} \ll {k^\epsilon C^4}/{(Q^2M_1N)} $ from Corollary \ref{final int bound} into \eqref{zero frequency omega}, we get
			\begin{align*}
				\Omega_{\pm}^{0} & \ll \frac{k^\epsilon C^4}{Q^2M_1N} \sup_{\tilde{N} \ll N_0}\frac{\tilde{N}}{n_1^2}\, \mathop{\sum }_{\substack{q_2 \sim C/q_1 }} qr \mathop{\sum \sum}_{\substack{d, \, d^{\prime} \vert q}}(d,\, d^{\prime}) \mathop{\sum \sum}_{\substack{m,m^\prime  \sim M_{1} \\ (d,\,d^{\prime})|(m-m^{\prime}) }} |\lambda_f(m)||\lambda_{f}(m^\prime)|. 
			\end{align*}
		We use the inequality 
		\begin{align}\label{fou coef}
			 |\lambda_f(m)||\lambda_{f}(m^\prime)|  \leq \frac{1}{2} (|\lambda_{f}(m)|^2+ |\lambda_{f}(m^\prime)|^2)
		\end{align}
		 to count the number of $m$ and $m^\prime$ as follows: 
		\begin{align*}
			 \mathop{\sum \sum}_{\substack{m,m^\prime  \sim M_{1} \\ (d,\,d^{\prime})|(m-m^{\prime}) }} |\lambda_f(m)\lambda_{f}(m^\prime)| &\ll \sum_{m\sim M_1} |\lambda_{f}(m)|^2+  \mathop{\sum \sum}_{\substack{m,m^\prime  \sim M_{1} \\ (d,\,d^{\prime})|(m-m^{\prime}), \, m\neq m^\prime }} (|\lambda_f(m)|^2+|\lambda_{f}(m^\prime)|^2) \\
			 &\ll k^\epsilon M_1+ \sum _{m\sim M_1}  |\lambda_f(m)|^2 \mathop{ \sum}_{\substack{m^\prime  \sim M_{1} \\ (d,\,d^{\prime})|(m-m^{\prime}), \, m^\prime\neq m }} 1 \\
			 &\ll k^\epsilon M_1(1+M_1/(d,d^\prime)),
		\end{align*}
	where we used the Ramanujan bound on average (see \eqref{RS for holomorphic} and \eqref{RS bound}).  Thus we see that 
		\begin{align*}
		\Omega_{\pm}^{0} 
		& \ll \frac{k^\epsilon N_0C^4}{n_1^2Q^2M_1N}\mathop{\sum }_{\substack{q_2 \sim C/q_1 }} qr  \mathop{\sum \sum}_{\substack{d ,\, d^{\prime} \vert q}}(M_1(d,d^{\prime})+M_1^2) \\
		& \ll \frac{k^\epsilon N_0C^4}{n_1^2Q^2M_1N}\mathop{\sum }_{\substack{q_2 \sim C/q_1 }} qr  (M_1q+M_1^2)   \ll \frac{k^\epsilon N_0C^6r}{q_1n_1^2Q^2N}  (C+M_1).
	\end{align*}
Hence we have the  first part of the lemma. On substituting  the above bound in place of $\Omega_{\pm}$ in \eqref{S(N)after cauchy}, we get 
			\begin{align*}
				S_r^{0}(N)\ll \mathop{\sup_{ \substack{M_1\ll M_0 \\ C \ll Q}}}\frac{N^{5/3+\epsilon}}{QTr^{2/3}C^3} \sum_{\frac{n_1}{(n_1,r)}\ll C}n_1^{1/3}\Theta^{1/2}\sum_{\frac{n_1}{(n_1,r)}|q_1|(n_1r)^\infty}\frac{C^{3} {(N_0r)}^{1/2}}{n_1q_1^{1/2}Q\sqrt{N}}\left(\sqrt{M_1}+\sqrt{C}\right).
			\end{align*}
			Executing the $q_1$-sum trivially and replacing the range for $n_1$ by the longer range $n_1 \ll Cr$, we get 
			\begin{align*}
				S_r^0(N) \ll k^\epsilon \mathop{\sup_{ \substack{M_1\ll M_0 \\ C \ll Q}}}\frac{N^{2/3}(N_0r)^{1/2}}{r^{2/3}\sqrt{N}}\sum_{n_1\ll Cr}\frac{(n_1,r)^{1/2}}{n_1^{7/6}}\Theta^{1/2}\left(\sqrt{M_1}+\sqrt{C}\right).
			\end{align*}
			Next we evaluate the  $n_1$-sum, using  Cauchy's inequality  and the Ramanujan bound on average (see Lemma \ref{ramanubound}), as follows:
			\begin{align}\label{theta bound}
				\sum_{n_1\ll Cr}\frac{(n_1,r)^{1/2}}{n_1^{7/6}}\Theta^{1/2} \ll \left[\sum_{n_1 \ll Cr}\frac{(n_1,r)}{n_1}\right]^{1/2}\left[\mathop{\sum \sum}_{n_{1}^{2} n_{2} \leq N_0} \frac{\vert \lambda_{\pi}(n_{1},n_{2})\vert ^{2} }{(n_1^2n_2)^{2/3}}\right]^{1/2}\ll_{\pi, \epsilon} N_0^{1/6+\epsilon}.
			\end{align}
			Thus we arrive at 
			\begin{align}\label{zero s}
				S_r^0(N) \ll k^\epsilon\frac{N^{2/3}N_0^{2/3}}{r^{1/6}\sqrt{N}}\left(\sqrt{M_0}+\sqrt{Q}\right).
			\end{align}
			Note that
				 $$Q=k^\epsilon \sqrt{N/{T}} \ll k^{3/2+\epsilon}/\sqrt{{T}} \ll k^{2+\epsilon}/{T}\ll k^{2+\epsilon}Q^2/N.$$
			We also have $M_0= k^{\epsilon}\max\left({(k-1)^2C^2}/{N},\,  T\right)\ll k^{2+\epsilon} Q^2/N $ and  $$N_0=k^{\epsilon}\max \left\{{\left(CT\right)^{3}r}/{N},\,  T^{3/2} N^{1/2}r \right\} \ll k^\epsilon (QT)^3r/N\ll k^\epsilon T^{3/2}\sqrt{N}r.$$
			Finally, upon  using the above bounds in \eqref{zero s},   we get
			\begin{align*}
				S_r^0(N) \ll \frac{k^\epsilon r^{2/3} TN}{r^{1/6}\sqrt{N}}\frac{kQ}{\sqrt{N}} \ll k^\epsilon r^{1/2}N^{1/2}k^{3/2-\eta/2}.
			\end{align*}
			Hence the lemma follows.
		\end{proof}
		\section{\bf Analysis  of the  non-zero frequencies: $n_2 \neq 0$}
		It now remains to estimate  $S_r(N)$  corresponding to the non-zero frequencies, i.e., $n_2 \neq 0$. We will consider two cases, small $q$ 's and large $q$'s.  To start with, we  analyse the character sum $\mathfrak{C}_{\pm}$  given in \eqref{character sum}. We have the following lemma which  is taken from \cite{munshi12}.
		\begin{lemma}  \label{charbound}
			Let $\mathfrak{C}_{\pm}$ be as in \eqref{character sum}. Then, for $n_2 \neq 0$,	we have 
			$$\mathfrak{C}_{\pm} \ll \frac{q_{1}^2 \, r (m,n_{1})}{n_{1}} \mathop{\sum \sum}_{\substack{d_{2} \mid (q_{2},  n_{1} q_{2}^{\prime}\mp mn_{2}) \\ d_{2}^{\prime} \mid (q_{2}^{\prime},  n_{1} q_{2} \pm m^{\prime} n_{2})}} \, d_{2} d_{2}^{\prime} \, .$$
		\end{lemma}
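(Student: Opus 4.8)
The goal is to bound the character sum

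\begin{align*}
\mathfrak{C}_{\pm}=\mathop{\sum \sum}_{\substack{d\mid q \\ d^{\prime}\mid q^\prime}}dd^\prime\mu\left(\frac{q}{d}\right)\mu\left(\frac{q^\prime}{d^\prime}\right)\mathop{\sideset{}{^ \star}\sum_{\substack{\alpha \, {\rm mod} \, qr/n_1 \\ n_1\alpha\equiv-m \, {\rm mod} \, d}} \  \sideset{}{^ \star}\sum_{\substack{\alpha^\prime \, {\rm mod} \, q^\prime r/n_1 \\ n_1\alpha^\prime \equiv-m^\prime \, {\rm mod} \, d^\prime}}}_{\pm \bar{\alpha}q_2^\prime \mp\bar{\alpha}^\prime q_2\equiv -n_2  \, {\rm mod} \, q_1q_2q_2^\prime r/n_1}1,
\end{align*}
where $q=q_1q_2$, $q^\prime=q_1q_2^\prime$, $q_1\mid (n_1r)^\infty$ and $(n_1r,q_2q_2^\prime)=1$. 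The plan is to exploit this coprimality to factor the modulus $q_1q_2q_2^\prime r/n_1$ into a $q_1r/n_1$-part and a $q_2q_2^\prime$-part via the Chinese Remainder Theorem, and treat the two parts separately. The $q_1r/n_1$-part is handled trivially: the inner congruences $n_1\alpha\equiv -m\,(\mathrm{mod}\,d)$ have solutions only when $(m,n_1)$ is compatible, contributing a factor $(m,n_1)/n_1$ after accounting for the reduction, and summing $dd^\prime$ over $d\mid q_1$, $d^\prime\mid q_1$ together with the number of residues $\alpha,\alpha^\prime$ in the $q_1$-aspect gives the overall factor $q_1^2\,r(m,n_1)/n_1$ pulled out front.

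For the $q_2q_2^\prime$-part, since $(n_1r,q_2q_2^\prime)=1$, the divisors $d,d^\prime$ restricted to these moduli run over $d_2\mid q_2$, $d_2^\prime\mid q_2^\prime$, and crucially the Möbius factors force $d_2$ and $q_2/d_2$ (resp. $d_2^\prime$ and $q_2^\prime/d_2^\prime$) to interact. The congruence $\pm\bar\alpha q_2^\prime\mp\bar\alpha^\prime q_2\equiv -n_2\,(\mathrm{mod}\,q_2q_2^\prime)$ splits by CRT into a condition mod $q_2$ and a condition mod $q_2^\prime$: modulo $q_2$ one gets $\mp\bar\alpha^\prime q_2\equiv\cdots$ — wait, more carefully, modulo $q_2$ we have $\pm\bar\alpha q_2^\prime\equiv -n_2$, which determines $\bar\alpha\,(\mathrm{mod}\,q_2)$ uniquely (since $(q_2^\prime,q_2)=1$ after pulling out common factors), and similarly modulo $q_2^\prime$ it determines $\bar\alpha^\prime$. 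But $\alpha$ is only constrained mod $d$ by the $n_1\alpha\equiv -m$ relation and otherwise free; the residue of $\alpha$ modulo $q_2$ gets pinned down, so we need the $q_2$-part of $d$ to be compatible. Running through this, the count of valid $(\alpha,\alpha^\prime)$ forces $d_2\mid n_1q_2^\prime\mp mn_2$ and $d_2^\prime\mid n_1q_2\pm m^\prime n_2$, and each such pair contributes $d_2d_2^\prime$ to the total (the remaining free residues), yielding exactly the claimed sum $\sum\sum d_2d_2^\prime$ over those divisibility constraints.

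In practice I would organize the proof as: (i) use multiplicativity to write $\mathfrak{C}_\pm$ as a product over the $q_1r/n_1$ and $q_2q_2^\prime$ parts; (ii) bound the $q_1$-part by $q_1^2 r (m,n_1)/n_1$ directly, estimating the number of $\alpha$-residues and using $\sum_{d\mid q_1}d\ll q_1^{1+\epsilon}$; (iii) in the $q_2q_2^\prime$-part, open the congruence by CRT, solve for $\bar\alpha\,(\mathrm{mod}\,q_2)$ and $\bar\alpha^\prime\,(\mathrm{mod}\,q_2^\prime)$, and extract the compatibility conditions $d_2\mid (q_2,n_1q_2^\prime\mp mn_2)$ and $d_2^\prime\mid(q_2^\prime,n_1q_2\pm m^\prime n_2)$; (iv) count the surviving residues to get the weight $d_2d_2^\prime$. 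The main obstacle is step (iii): carefully tracking how the ramified modulus $q_1r/n_1$ interacts with $n_1\alpha\equiv -m\,(\mathrm{mod}\,d)$ — in particular, $n_1$ is not invertible mod $d$ when $d$ shares factors with $n_1$, so one must argue that the congruence is solvable (forcing $(m,n_1)$ divisibility) and count solutions correctly — and ensuring the CRT splitting of the main congruence is clean given that $q_2,q_2^\prime$ may share common factors. Since this lemma is quoted verbatim from Munshi's paper \cite{munshi12}, I would cite that reference for the detailed verification rather than reproduce every congruence manipulation.
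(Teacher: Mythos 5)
Your overall strategy is the same as the paper's: split $\mathfrak{C}_{\pm}$ by the Chinese Remainder Theorem into a part modulo $q_1r/n_1$ and a part modulo $q_2q_2'$ (using $q_1\mid(n_1r)^\infty$, $(n_1r,q_2q_2')=1$), bound the ramified part crudely, and in the unramified part use $\alpha\equiv-m\bar{n}_1\ (\mathrm{mod}\ d_2)$, $\alpha'\equiv-m'\bar{n}_1\ (\mathrm{mod}\ d_2')$ together with the linking congruence to extract the divisibility conditions $d_2\mid(q_2,\,n_1q_2'\mp mn_2)$ and $d_2'\mid(q_2',\,n_1q_2\pm m'n_2)$ with weight $d_2d_2'$. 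That part matches the paper (which is equally brief about possible common factors of $q_2$ and $q_2'$), and deferring the residual bookkeeping to Munshi's paper is legitimate, since the lemma is quoted from there.

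There is, however, one step which, as you describe it, would not deliver the stated bound: the $q_1r/n_1$-part is \emph{not} handled by a fully trivial count of both $\alpha$ and $\alpha'$. If you only impose $n_1\alpha\equiv-m\ (\mathrm{mod}\ d_1)$ and $n_1\alpha'\equiv-m'\ (\mathrm{mod}\ d_1')$, each residue count is of size about $(n_1,d_1)\,q_1r/(n_1d_1)$, resp. $(n_1,d_1')\,q_1r/(n_1d_1')$, and after summing $d_1d_1'$ over $d_1,d_1'\mid q_1$ you get roughly $\bigl(q_1r(m,n_1)/n_1\bigr)^2$ up to divisor factors, which exceeds the claimed $q_1^2r(m,n_1)/n_1$ by the factor $r(m',n_1)/n_1$; this factor can be large (take $n_1=1$). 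The missing ingredient, used in the paper, is that the linking congruence $\pm\bar{\alpha}q_2'\mp\bar{\alpha}'q_2\equiv-n_2$ reduced modulo $q_1r/n_1$ determines $\alpha'$ uniquely in terms of $\alpha$, because $q_2$ is invertible modulo $q_1r/n_1$. Hence only the $\alpha$-count $\ll(m,n_1)\,q_1r/(n_1d_1)$ survives, and
$\sum_{d_1\mid q_1}d_1\cdot(m,n_1)\,q_1r/(n_1d_1)\cdot\sum_{d_1'\mid q_1}d_1'\ll q_1^2\,r\,(m,n_1)/n_1$
up to $k^{\epsilon}$, as claimed. With this one correction your argument coincides with the paper's proof.
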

		\begin{proof}
			Let's recall from  \eqref{character sum} that
			\begin{align*}
				\mathfrak{C}_{\pm}=\mathop{\sum \sum}_{\substack{d|q \\ d^{\prime}|q^\prime}}dd^\prime\mu\left(\frac{q}{d}\right)\mu\left(\frac{q^\prime}{d^\prime}\right)\mathop{\sideset{}{^ \star}\sum_{\substack{\alpha \, {\rm mod} \, qr/n_1 \\ n_1\alpha\equiv-m \, {\rm mod} \, d}} \  \sideset{}{^ \star}\sum_{\substack{\alpha^\prime \, {\rm mod} \, q^\prime r/n_1 \\ n_1\alpha^\prime \equiv-m^\prime \, {\rm mod} \, d^\prime}}}_{\pm \bar{\alpha}q_2^\prime \mp\bar{\alpha}^\prime q_2\equiv -n_2  \, {\rm mod} \, q_1q_2q_2^\prime r/n_1}1.
			\end{align*}
			Using the Chinese Remainder theorem,  we observe that $\mathfrak{C}_{\pm}$ can be dominated by a product of two sums $ \mathfrak{C}_{\pm} \ll \mathfrak{C}_{\pm}^{(1)} \mathfrak{C}_{\pm}^{(2)}$,
			where
			$$\mathfrak{C}_{\pm}^{(1)} = \mathop{\sum \sum}_{\substack{d_{1}, d_{1}^{\prime} | q_{1}}} d_{1} d_{1}^{\prime}  \; \mathop{\sideset{}{^\star}{\sum}_{\substack{\beta \; {\rm mod} \; \frac{q_{1}r}{n_{1}} \\  n_{1} \beta \;  \equiv \; - m \; {\rm mod} \;  d_{1}}} \  \sideset{}{^\star}{\sum}_{\substack{\beta^{\prime} \; {\rm mod} \; \frac{q_{1}r}{n_{1}} \\  n_{1}   \beta^{\prime} \;  \equiv \; - m^{\prime} \; {\rm mod} \;  d_{1}^{\prime}}}}_{\pm \overline{\beta} q_{2}^{\prime} \mp \overline{\beta^{\prime}} q_{2} + n_{2} \; \equiv \; 0  \;\mathrm{mod}\; {q_{1}r }/{n_{1}} } \; 1 $$
			and
			$$\mathfrak{C}_{\pm}^{(2)} = \mathop{\sum \sum}_{\substack{d_{2} \mid q_{2} \\ d_{2}^{\prime} \mid q_{2}^{\prime}}} d_{2} d_{2}^{\prime}  \; \mathop{\sideset{}{^\star}{\sum}_{\substack{\beta \; {\rm mod }\; q_{2} \\  n_{1} \beta \;  \equiv \; - m \; {\rm mod} \;  d_{2}}} \, \sideset{}{^\star}{\sum}_{\substack{\beta^{\prime} \; {\rm mod} \; q_{2}^{\prime} \\  n_{1}   \beta^{\prime} \;  \equiv \; - m^{\prime}  \; {\rm mod} \;  d_{2}^{\prime}}}}_{\pm \overline{\beta} q_{2}^{\prime} \mp \overline{\beta^{\prime}} q_{2} + n_{2} \; \equiv \; 0 \;\mathrm{mod}\,  q_{2} q_{2}^{\prime} } \; 1.$$
			On analysing the second sum $\mathfrak{C}_{\pm}^{(2)}$,  we get  $\beta \equiv -m\bar{n_1}\,  {\rm  mod} \, d_{2}$ and $\beta^{\prime} \equiv \, -m^{\prime}\bar{n_1} \, {\rm  mod} \, d_{2}^{\prime}$,  as $(n_{1},q_{2}q_{2}^{\prime})=1$. Then using the congruence modulo $q_{2} q_{2}^{\prime}$, we conclude that
			
			$$\mathfrak{C}_{\pm}^{(2)} \ll  \mathop{\sum \sum}_{\substack{d_{2} \mid (q_{2},  n_{1} q_{2}^{\prime}\mp mn_{2}) \\ d_{2}^{\prime} \mid (q_{2}^{\prime},  n_{1} q_{2} \pm m^{\prime} n_{2})}} \, d_{2} d_{2}^{\prime}.$$
			In the first sum $\mathfrak{C}_{\pm}^{(1)}$, the congruence condition determines $\beta$ uniquely in terms of $\beta^\prime$, and hence 
			$$\mathfrak{C}_{\pm}^{(1)} \ll \mathop{\sum \sum}_{\substack{d_{1}, d_{1}^{\prime} | q_{1}}} d_{1} d_{1}^{\prime} \sideset{}{^\star}{\sum}_{\substack{\beta \; {\rm mod} \; {q_{1}r}/{n_{1}} \\  n_{1} \beta \;  \equiv \; - m \; {\rm mod} \;  d_{1}}} \, 1 \ll \frac{r \, q_{1}^2 \, (m,n_{1}) }{n_{1}} .$$
			Hence we have the lemma.
		\end{proof}
		\subsection{$S_r(N)$ for small  $q$} In this subsection, we will estimate  $S_r(N)$ for small values of  $q$. Let $\Omega_{\pm}^{\neq 0}$ denote the part of $\Omega_{\pm}$ (defined in \eqref{final omega}) which is complement to $\Omega_{\pm}^{ 0}$ ( contribution of $n_2 \neq 0$) and let $S_r^{\neq 0}(N)$ denote the part of $S_r(N)$ corresponding to $\Omega_{\pm}^{\neq 0}$.  We have   the following lemma.
		\begin{lemma}\label{small q bound}
			Let $\Omega_{\pm}^{\neq 0} $ and  $S_r^{\neq 0}(N)$ be as above. Then, for  $C \ll k^{1+\epsilon}$, we have
			\begin{align}\label{omage bound for small q}
				\Omega_{\pm}^{\neq 0} \ll  \frac{k^\epsilon r^2C^7(TN)^{1/2}} {n_{1}^2q_1 Q^2M_1N}\left(\frac{CM_1n_1}{q_1}+M_1^2\right).
			\end{align}
			Furthermore, let $S_{r,\, small}^{\neq 0}(N)$  denote the contribution of  $C \ll k^{1+\epsilon}$ to $S_r^{\neq 0}(N)$. Then we have
			\begin{align}\label{S small}
				S_{r, small}^{\neq 0}(N) \ll  r^{1/2}k^{3-\eta/2},
			\end{align}
			where 	$T=k^{1-\eta}$.
		\end{lemma}
		\begin{proof}
			On applying  \eqref{fou coef} to  \eqref{Omega}, we see that   $\Omega_{\pm}^{\neq 0}$ is dominated by
			$$k^{\epsilon} \sup_{\tilde{N} \ll N_0}\frac{\tilde{N}}{n_1^2}\mathop{\sum\,  \sum}_{q_2,\, q_2^{\prime}\sim C/q_1} \mathop{\sum\,  \sum}_{m,\, m^{\prime} \sim M_1} (|\lambda_f(m)|^2+|\lambda_{f}(m^\prime)|^2) \sum_{n_2 \in \mathbb{Z} -\{0\}}|\mathfrak{C}_{\pm}||\mathcal{J}_{\pm}|.$$
		 We  analyse  the expression corresponding to  $|\lambda_{f}(m^\prime)|^2$ only, since the calculations for  the other  expression is absolutely similar.  Thus,  on applying  Lemma \ref{charbound} and   Corollary \ref{final int bound},    we arrive at 
			\begin{align*}
			  \frac{k^\epsilon q_{1}^2 rC^4} {n_{1}^3Q^2M_1N}  \sup_{\tilde{N} \ll N_0}{\tilde{N}} \, \mathop{\sum \sum }_{\substack{q_2,\, q_{2}^{\prime} \sim \frac{C}{q_{1}}  } } \mathop{\sum \sum}_{\substack{d_{2} \mid q_{2} \\ d_{2}^{\prime} \mid q_{2}^{\prime}}} d_{2} d_{2}^{\prime} \, \mathop{ \mathop{\sum \ \sum \ \  \ \sum}_{m,\, m^{\prime} \sim M_{1} \ n_2 \in \mathbb{Z}-\{0\}}}_{\substack{ n_{1} q_{2}^{\prime} \mp m n_{2} \, \equiv \,  0 \, {\rm mod} \, d_{2} \\  n_{1} q_{2} \pm m^{\prime} n_{2} \, \equiv \,  0 \, {\rm mod} \, d_{2}^{\prime}}}  |\lambda_f(m^\prime)|^2(m,n_{1}).
			\end{align*}
			 Writing $q_2d_2$  and $q_2^{\prime}d_2^{\prime}$ in place of $q_2^{\prime}$ and $q_2^\prime$ respectively, we arrive at
			\begin{align}\label{omega nonzero}
			  \frac{k^\epsilon q_{1}^2 rC^4} {n_{1}^3Q^2M_1N}  \sup_{\tilde{N} \ll N_0}{\tilde{N}} \mathop{\sum \sum}_{d_{2}, d_{2}^{\prime} \ll C/q_{1} } d_{2} d_{2}^{\prime} \, \mathop{\sum \sum }_{\substack{q_{2} \sim \frac{C}{d_{2}q_{1}} \\ q_{2}^{\prime} \sim \frac{C}{d_{2}^{\prime} q_{1}}}}  \mathop{ \mathop{\sum \ \sum \    \sum}_{m,\,m^{\prime} \sim M_{1} \  1\leq|n_2|\ll N_2}}_{\substack{ n_{1} q_{2}^{\prime} d_{2}^{\prime}\mp m n_{2} \, \equiv \,  0 \, {\rm mod} \, d_{2} \\  n_{1} q_{2} d_{2}\pm m^{\prime} n_{2} \, \equiv \,  0 \, {\rm mod} \, d_{2}^{\prime}}} |\lambda_f(m^\prime) |^2 (m,n_{1}). 
			\end{align}
			Fixing the parameters $(n_2, q_2,q_2^\prime,d_2, d_2^\prime, m^\prime)$, we  count the number of $m$ as follows: 
			\begin{align}\label{m sum}
				\sum_{\substack{m\sim M_{1} \\ n_{1} q_{2}^{\prime} d_{2}^{\prime}\mp m n_{2} \, \equiv \,  0 \, {\rm mod} \, d_{2}}} (m,n_{1}) & = \sum_{\ell \mid n_{1}} \ell \,  \sum_{\substack{m \sim M_{1}/\ell \\ n_{1} q_{2}^{\prime} d_{2}^{\prime}\overline{\ell} \mp m n_{2} \, \equiv \,  0 \, {\rm mod} \, d_{2}}} 1   \\
				&= \sum_{\ell \mid n_{1}} \ell \, \left((d_2,q_2^\prime d_2^\prime,n_2)+\frac{M_1}{\ell d_2/(d_2,d_2^\prime q_2^\prime,n_2)}\right) \notag \\
				& \ll (d_{2}, d_2^\prime q_2^\prime\, ,n_{2}) \, \left(n_{1}+\frac{M_{1}}{d_{2}}\right) \notag,
			\end{align} 
		where  $\bar{\ell}$  is the inverse of $\ell $  modulo $d_2$ which follows from  the fact $(d_2,n_1)=1$.   On applying  \eqref{m sum} with the bound $ (d_{2} ,n_{2}) ( n_{1}+{M_{1}}/{d_{2}}) $ and then executing the sum over $q_2^\prime$ in \eqref{omega nonzero}, we arrive at 
				\begin{align}\label{sum over q2}
				\frac{k^\epsilon q_{1}^2 rC^4} {n_{1}^3Q^2M_1N} & \sup_{\tilde{N} \ll N_0}{\tilde{N}}  \mathop{\sum \sum}_{d_{2}, d_{2}^{\prime} \ll C/q_{1} } \frac{Cd_2}{q_1} \, \mathop{ \sum }_{\substack{q_{2} \sim \frac{C}{d_{2}q_{1}} }} \\
				 &\times  \mathop{ \mathop{\sum \ \ \ \ \sum }_{ 1\leq |n_2|\ll N_2,\,   m^\prime \sim M_{1} }}_{\substack{   n_{1} q_{2} d_{2}\pm m^{\prime} n_{2} \, \equiv \,  0 \, {\rm mod} \, d_{2}^{\prime}}} |\lambda_f(m^\prime) |^2 (d_2,n_2)\left(n_{1}+\frac{M_{1}}{d_{2}}\right) \notag. 
			\end{align}
			 We  now  count the number of   $(d_2, d_2^\prime, m^\prime)$  following the arguments in \cite[Section 6.1]{Lin-Mic-Saw}.
				\paragraph*{Case 1}  $ n_{1} q_{2} d_{2}\pm m^\prime n_{2} \, \equiv \,  0 \, {\rm mod} \, d_{2}^\prime $ but $n_{1} q_{2} d_{2}\pm m^\prime n_{2}  \neq 0$.  On switching the order of summations over $d_2^\prime$ and $m^\prime$, we see that the $d_2^\prime$-sum is bounded above by $d(|n_{1} q_{2} d_{2}\pm m^\prime n_{2} |)\ll k^\epsilon$, with $d(n)$ being the divisor function.  Thus \eqref{sum over q2} is bounded above  by 
					\begin{align*}
					\frac{k^\epsilon q_{1}^2 rC^4} {n_{1}^3Q^2M_1N} & \sup_{\tilde{N} \ll N_0}{\tilde{N}} \mathop{\sum }_{d_{2} \ll {C}/{q_1}} \frac{Cd_2}{q_1} \, \mathop{ \sum }_{\substack{q_{2} \sim \frac{C}{d_{2}q_{1}}  }} \\
					&\times\sum_{ 1\leq|n_2|\ll N_2}  \sum_{ m^\prime \sim M_{1}} |\lambda_f(m^\prime) |^2 (d_2,n_2) \left(n_{1}+\frac{M_{1}}{d_{2}}\right). 
				\end{align*}
				On applying the Ramanujan bound on average to the $m^\prime$-sum (see \eqref{RS for holomorphic}, \eqref{RS bound}) and executing the $n_2$-sum, we arrive at 
					\begin{align*}
					\frac{k^\epsilon q_{1}^2 rC^4} {n_{1}^3Q^2M_1N} & \sup_{\tilde{N} \ll N_0}{\tilde{N}} N_2M_1 \mathop{\sum }_{d_{2} \ll {C}/{q_1}} \frac{Cd_2}{q_1} \, \mathop{ \sum }_{\substack{q_{2} \sim \frac{C}{d_{2}q_{1}}  }} \left(n_{1}+\frac{M_{1}}{d_{2}}\right). 
				\end{align*}
			Now executing the remaining sums, we get the following expression
						\begin{align} \label{omega nonzero final }
				\frac{k^\epsilon  rC^6} {n_{1}^3Q^2M_1N} & \sup_{\tilde{N} \ll N_0}{\tilde{N}}  N_2   \left(\frac{Cn_1M_1}{q_1}+{M_{1}^2}\right). 
			\end{align}
	 On applying  the bounds  $N_2=k^{\epsilon}{CN^{1/3}r^{2/3}n_1}/{(q_1\tilde{N}^{2/3})}$ (see  \eqref{N2}) and $N_0 \ll k^\epsilon T^{3/2}\sqrt{N}r $ (see \eqref{N0}), we note that 
			\begin{align} \label{N0N2}
				\sup_{\tilde{N} \ll N_0} \tilde{N}N_2 \ll k^\epsilon\frac{Cr^{2/3}n_1}{q_1}(N\tilde{N})^{1/3} \ll k^\epsilon\frac{Cr^{2/3}n_1}{q_1}(NN_0)^{1/3} \ll  \frac{k^\epsilon rn_1}{q_1}(TN)^{1/2}C.
			\end{align}
		Thus, in Case 1, we get  the following bound for $	\Omega_{\pm}^{\neq 0}$
		\begin{align} \label{omega nonzero bound}
			 \frac{k^\epsilon r^2C^7(TN)^{1/2}} {n_{1}^2q_1 Q^2M_1N} \left(\frac{Cn_1M_1}{q_1}+M_1^2\right).
			\end{align}
			
			\paragraph*{Case 2} $n_{1} q_{2} d_{2}\pm m^\prime n_{2}  = 0$. On applying \eqref{m sum} with the bound 
			 $ (d_{2} ^\prime q_2^\prime,n_{2}) ( n_{1}+{M_{1}}/{d_{2}}) $  and switching some summations in   \eqref{omega nonzero}, we arrive  at 
				\begin{align}
				\frac{k^\epsilon q_{1}^2 rC^4} {n_{1}^3Q^2M_1N}  \sup_{\tilde{N} \ll N_0}{\tilde{N}} &\mathop{\sum \sum}_{d_{2}, d_{2}^{\prime} \ll C/q_{1} } d_{2} d_{2}^{\prime} \, \mathop{\sum \sum }_{\substack{ q_{2}^{\prime} \sim \frac{C}{d_{2}^{\prime} q_{1}}}} \sum_{m^\prime \sim M_1}  |\lambda_f(m^\prime) |^2 \\
				& \times \mathop{ \mathop{\sum \ \ \ \ \sum \ \  }_{  1\leq|n_2|\ll N_2 \   q_2 \sim C/d_2q_1 }}_{\substack{  n_{1} q_{2} d_{2}\pm m^{\prime} n_{2} =  0  }}  (d_2^\prime q_2^\prime,n_{2}) \left(n_{1}+\frac{M_{1}}{d_{2}}\right). \notag
			\end{align}
			Fixing  the tuple $(m^\prime, n_2,d_2)$, the number of $q_2$ turns out to be $O(k^\epsilon)$ (as $q_2 |m^\prime n_2$). Thus we arrive at 
				\begin{align*}
				\frac{k^\epsilon q_{1}^2 rC^4} {n_{1}^3Q^2M_1N}  \sup_{\tilde{N} \ll N_0}{\tilde{N}} &\mathop{ \sum}_{ d_{2}^{\prime} \ll C/q_{1} }  d_{2}^{\prime} \, \mathop{ \sum }_{\substack{ q_{2}^{\prime} \sim \frac{C}{d_{2}^{\prime} q_{1}}}} \sum_{m^\prime \sim M_1}  |\lambda_f(m^\prime) |^2 \\
				& \times \sum_{1 \leq |n_2| \ll N_2}  (d_2^\prime q_2^\prime,n_{2}) \mathop{\sum}_{\substack{d_2 \ll C/q_1 \\ d_2| m^\prime n_2}} \left(n_{1}d_2+{M_{1}}\right). 
			\end{align*}
			Now executing the sum over $d_2$, followed by the sum over $n_2$, $m^\prime$, $q_2^\prime$ and $d_2^\prime$, we see that the above expression is bounded above by
				\begin{align*}
				\frac{k^\epsilon  rC^6} {n_{1}^3Q^2M_1N} & \sup_{\tilde{N} \ll N_0}{\tilde{N}}  N_2   \left(\frac{Cn_1M_1}{q_1}+{M_{1}^2}\right). 
			\end{align*}
			 Now estimating $\tilde{N}N_2$ like Case 1, we get the first part of the  lemma. 
		We  will now prove 	 \eqref{S small}. Consider the second term of the right  hand side in \eqref{omage bound for small q}.  On  substituting it in place of $\Omega_{\pm}$ in   $S_r(N)$ in \eqref{S(N)after cauchy}, we arrive at 
			\begin{align*}
				& \mathop{\sup_{ \substack{M \ll M_1\ll M_0 \\ C \ll k^{1+\epsilon}}}}\frac{N^{5/3+\epsilon}}{QTr^{2/3}C^3}\sum_{\pm}\sum_{\frac{n_1}{(n_1,r)}\ll C}n_1^{1/3}\Theta^{1/2}\sum_{\frac{n_1}{(n_1,r)}|q_1|(n_1r)^\infty}\left( \frac{ r^2C^7(TN)^{1/2}M_1} {n_{1}^2q_1 Q^2N}\right)^{1/2} \\
				&\ll \mathop{\sup_{ \substack{ M \ll M_1 \ll M_0\\  C \ll k^{1+\epsilon}}}}\frac{N^{5/3+\epsilon}}{QTr^{2/3}C^3}\frac{r(TN)^{1/4}C^{7/2}M_1^{1/2}}{Q\sqrt{N}}\sum_{n_1\ll Cr}n_1^{-2/3}\Theta^{1/2}\sum_{\frac{n_1}{(n_1,r)}|q_1|(n_1r)^\infty}\frac{1}{q_1^{1/2}} \notag \\
				&\ll  \mathop{\sup_{ \substack{M \ll M_1 \ll M_0 \\ C \ll k^{1+\epsilon}}}}\frac{N^{5/3+\epsilon}}{QTr^{2/3}}\frac{r(TN)^{1/4}C^{1/2}M_1^{1/2}}{Q\sqrt{N}}\sum_{n_1\ll Cr}\frac{\sqrt{(n_1,r)}}{n_1^{7/6}}\Theta^{1/2}\\
				& \ll k^{\epsilon}r^{1/2}k^{3-\eta/2},
			\end{align*}
			where in the second last inequality, we used  
			$$ \sum_{n_1\ll Cr}\frac{\sqrt{(n_1,r)}}{n_1^{7/6}}\Theta^{1/2}\ll_{\pi, \epsilon} N_0^{1/6+\epsilon}$$
			from  \eqref{theta bound}, $C \ll k^{1+\epsilon}$, $N_0 \ll k^\epsilon r\sqrt{N}T^{3/2}$ and   $M_0 \ll k^{4+\epsilon}/N$ as $C \ll k^{1+\epsilon}$. 
			
			Let's now  consider the first term in the right hand side of \eqref{omage bound for small q}. We see that its contribution to $S_r(N)$ in  \eqref{S(N)after cauchy} is given by
			\begin{align*}
				& \mathop{\sup_{ \substack{M \ll M_1\ll M_0 \\ C \ll k^{1+\epsilon}}}}\frac{N^{5/3+\epsilon}}{QTr^{2/3}C^3}\sum_{\pm}\sum_{\frac{n_1}{(n_1,r)}\ll C}n_1^{1/3}\Theta^{1/2}\sum_{\frac{n_1}{(n_1,r)}|q_1|(n_1r)^\infty}\left( \frac{ r^2C^7(TN)^{1/2}C} {n_{1}q_1^2 Q^2N}\right)^{1/2} \\
				&\ll \mathop{\sup_{ \substack{ M \ll M_1 \ll M_0\\  C \ll k^{1+\epsilon}}}}\frac{N^{5/3+\epsilon}}{QTr^{2/3}C^3}\frac{r(TN)^{1/4}C^{7/2}C^{1/2}}{Q\sqrt{N}}\sum_{n_1\ll Cr}n_1^{-1/6}\Theta^{1/2}\sum_{\frac{n_1}{(n_1,r)}|q_1|(n_1r)^\infty}\frac{1}{q_1} \notag \\
				&\ll  \mathop{\sup_{ \substack{M \ll M_1 \ll M_0 \\ C \ll k^{1+\epsilon}}}}\frac{N^{5/3+\epsilon}}{QTr^{2/3}}\frac{r(TN)^{1/4}C}{Q\sqrt{N}}\sum_{n_1\ll Cr}\frac{{(n_1,r)}}{n_1^{7/6}}\Theta^{1/2} \\
				&	\ll k^{3-\eta/2}.
			\end{align*}
			In the second last inequality, we used the bound
			\begin{align*}
				\sum_{n_1\ll Cr}\frac{(n_1,r)}{n_1^{7/6}}\Theta^{1/2} \ll \left[\sum_{n_1 \ll Cr}\frac{(n_1,r)^2}{n_1}\right]^{1/2}\left[ \mathop{\sum \sum}_{n_{1}^{2} n_{2} \leq N_0} \frac{\vert \lambda_{\pi}(n_{1},n_{2})\vert ^{2} }{(n_1^2n_2)^{2/3}}\right]^{1/2}\ll_{\pi, \epsilon}r^{1/2}N_0^{1/6+\epsilon}.
			\end{align*}
			 Thus we have the lemma.
		\end{proof}
		\subsection{Estimates for generic $q$} Now we  tackle the case when $C \gg k^{1+\epsilon}$ and $n_2 \neq 0$.
		Let $S_{r, generic}^{\neq 0}(N)$  denote the part of $S_r^{\neq 0}(N)$ which is complement to $S_{r, small}^{\neq 0}(N)$ (i.e.,  the contribution of  $C \gg k^{1+\epsilon}$) and $n_2 \neq 0 $ to $S_r(N)$. We have the following lemma.
		\begin{lemma}\label{SN generic bound final}
			Let $	S_{r, generic}^{\neq 0}(N) $ be as above.   Then  we have
			\begin{align}\label{S(N) generic bound}
				S_{r, generic}^{\neq 0}(N) \ll  N^{1/2}k^{3/2-1/6+3\eta/4}.
			\end{align}
		\end{lemma}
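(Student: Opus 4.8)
Throughout the relevant range we have $C \gg k^{1+\epsilon}$, $n_2 \neq 0$, and hence $B \asymp k$, so the second (stronger) estimate in Corollary \ref{final int bound}, namely $\mathcal{J}_{\pm} \ll \tfrac{C^2}{Q^2}\cdot\tfrac{Cr^{1/3}B^{2/3}}{B^2(NN_0)^{1/3}}$, is at our disposal. The plan is to re-run the proof of Lemma \ref{small q bound} verbatim up to and including the character-sum manipulation via Lemma \ref{charbound} and the counting of the $m,m',n_2,q_2,q_2',d_2,d_2'$ sums; none of those steps depends on which bound we use for $\mathcal{J}_{\pm}$, and they terminate in \eqref{omega nonzero final}. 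At the point where Corollary \ref{final int bound} is invoked I will substitute the stronger bound, which merely inserts the extra factor $Cr^{1/3}B^{2/3}/(NN_0)^{1/3}$. Multiplying \eqref{omega nonzero final} by this factor, using the relation $N_2 N_0^{2/3}=k^\epsilon\,CN^{1/3}r^{2/3}n_1/q_1$ implicit in \eqref{N2}, and using $B\asymp k$, I obtain, for $C\gg k^{1+\epsilon}$,
\[
\Omega_{\pm}^{\neq 0} \ll \frac{r^2 C^6}{n_1^2 q_1 Q^2 k^{4/3}}\left(\frac{C^2 n_1}{q_1^2}+\frac{C n_1 M_1}{q_1}+M_1^2\right).
\]

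\textbf{Insertion into the Cauchy bound.} I then feed this into \eqref{S(N)after cauchy}. As in Lemma \ref{small q bound}, the three terms of the bracket are treated separately and each yields the same final estimate; I will record the computation only for the $M_1^2$-term, which is representative. Taking the square root, executing the $q_1$-sum over the divisor chain $\tfrac{n_1}{(n_1,r)}\mid q_1\mid (n_1 r)^\infty$ (so that $\sum_{q_1}q_1^{-1/2}\ll k^\epsilon (n_1,r)^{1/2}n_1^{-1/2}$), and evaluating the surviving $n_1$-sum against $\Theta^{1/2}$ by \eqref{theta bound} (which produces $N_0^{1/6}$), I arrive at
\[
S_{r,\,\mathrm{generic}}^{\neq 0}(N) \ll \sup_{C \gg k^{1+\epsilon}} \frac{N^{5/3} r^{1/3} M_1 N_0^{1/6}}{Q^2 T k^{2/3}}.
\]

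\textbf{Conclusion.} Now I substitute the effective ranges valid here: since $C\gg k^{1+\epsilon}$ one has $M_0\asymp k^2C^2/N$, hence $M_1\asymp k^2C^2/N$; moreover $N_0\ll T^{3/2}N^{1/2}r$ and $Q^2\asymp N/T$. This turns the bound into $N^{3/4}r^{1/2}k^{4/3}C^2/(Q^2 T^{3/4})$, and $C\le Q$ removes the $C$-dependence, leaving $N^{3/4}r^{1/2}k^{4/3}T^{-3/4}$. Writing $N^{3/4}r^{1/2}=N^{1/2}(Nr^2)^{1/4}$ and invoking $Nr^2\ll k^{3+\epsilon}$ gives $N^{1/2}k^{3/4}k^{4/3}T^{-3/4}$; with $T=k^{1-\eta}$ this equals $N^{1/2}k^{4/3+3\eta/4}=N^{1/2}k^{3/2-1/6+3\eta/4}$, as claimed. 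It remains to verify that the two subdominant terms $C^2 n_1/q_1^2$ and $C n_1 M_1/q_1$ contribute no more than the $M_1^2$-term; this follows the same pattern as the corresponding check in Lemma \ref{small q bound}, using the Cauchy--Schwarz variants of \eqref{theta bound} (together with Lemma \ref{ramanubound}) and the constraints $C\gg k^{1+\epsilon}$, $Nr^2\ll k^{3+\epsilon}$.

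\textbf{Main obstacle.} There is no new analytic input: the stationary-phase evaluations of the $y$- and $\tau$-integrals (Lemma \ref{y-integral bound}), the third-derivative bound on the $w$-integral (Proposition \ref{bound for integral}), and the character-sum bound (Lemma \ref{charbound}) are all already in place, so the argument is essentially bookkeeping. The one point requiring genuine care is to check that the hypotheses $C\gg k^{1+\epsilon}$ and $n_2\neq 0$ — and hence $B\asymp k$ and the collapse $M_1\asymp k^2C^2/N$ — hold uniformly over every sum that survives once the improved $\mathcal{J}_{\pm}$ bound is inserted, and to track the powers of $r$ and $q_1$ precisely enough that the closing appeal to $Nr^2\ll k^{3+\epsilon}$ exactly absorbs the surplus factor $N^{1/4}r^{1/2}$.
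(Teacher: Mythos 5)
Your proposal is correct and follows essentially the same route as the paper: you reuse the $\Omega_{\pm}^{\neq 0}$ bound \eqref{omega nonzero final} from the small-$q$ analysis, replace the integral estimate by the stronger bound of Corollary \ref{final int bound} (valid since $C\gg k^{1+\epsilon}$, $n_2\neq 0$ force $B\asymp k$), and then feed the result into \eqref{S(N)after cauchy} using \eqref{theta bound}, $N_0\ll T^{3/2}N^{1/2}r$, $C\ll Q$ and $Nr^2\ll k^{3+\epsilon}$. The only difference is cosmetic bookkeeping — you substitute $B\asymp k$ and $M_1\asymp k^2C^2/N$ directly, whereas the paper packages the gain as an extra factor $k^{\eta-1/3}$ relative to \eqref{omega nonzero bound} and recycles the three estimates of Lemma \ref{small q bound}; the subdominant terms you defer do indeed check out exactly as you indicate.
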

		\begin{proof}
			Let's recall from the analysis of $\Omega_{\pm}^{\neq 0}$ in the proof of  Lemma \ref{small q bound} that   (see \eqref{omega nonzero final }) 
			\begin{align}
				\Omega_{\pm}^{\neq 0}\ll 	 \frac{k^\epsilon  rC^6} {n_{1}^3Q^2M_1N}  \sup_{\tilde{N} \ll N_0}{\tilde{N}}N_2 \left(\frac{Cn_1M_1}{q_1}+M_1^2\right).
			\end{align}
			%
			%
			To get this, we used the bound  $\mathcal{J}_{\pm} \ll  {k^\epsilon C^4}/{(Q^2M_1N)}$. For $C \gg k^{1+\epsilon}$, we have a better bound for $\mathcal{J}_{\pm}$  (see Corollary \ref{final int bound}). In fact, we have 
			\begin{align}\label{generic int bound}
				\mathcal{J}_{\pm} \ll \frac{k^\epsilon C^2}{Q^2}\frac{Cr^{1/3}k^{2/3}}{k^2(N\tilde{N})^{1/3}} \asymp  \frac{k^\epsilon C^4}{Q^2M_1N}\frac{Cr^{1/3}k^{2/3}}{(N\tilde{N})^{1/3}},
			\end{align}
			where we used  $\sqrt{M_1N}/C \asymp k$ for $C \gg k^{1+\epsilon}$. Thus, on applying the above bound, we see that 
			\begin{align}
				\Omega_{\pm}^{\neq 0}\ll 	 \frac{k^\epsilon  rC^6} {n_{1}^3Q^2M_1N}\times Cr^{1/3}k^{2/3}\times   \sup_{\tilde{N} \ll N_0}\frac{{\tilde{N}}N_2}{(N\tilde{N})^{1/3}} \left(\frac{Cn_1M_1}{q_1}+M_1^2\right).
			\end{align}
			Recall from \eqref{N0N2} that  
			\begin{align}
				\sup_{\tilde{N} \ll N_0} \frac{{\tilde{N}}N_2}{(N\tilde{N})^{1/3}} \ll k^\epsilon\frac{Cr^{2/3}n_1}{q_1},
			\end{align}
			and 	$$\sup_{\tilde{N} \ll N_0} \frac{{\tilde{N}}N_2}{(N\tilde{N})^{1/3}} =\frac{{{N_0}}N_2}{(N{N_0})^{1/3}}.$$
			Thus  we see that 
			\begin{align}
				\Omega_{\pm}^{\neq 0}\ll 	 \frac{k^\epsilon  rC^6} {n_{1}^3Q^2M_1N}\times Cr^{1/3}k^{2/3}\times   \frac{{{N_0}}N_2}{(N{N_0})^{1/3}}\left(\frac{Cn_1M_1}{q_1}+M_1^2\right).
			\end{align}
			On comparing it with  \eqref{omega nonzero bound}, we observe  that we have an extra   factor $$\frac{Cr^{1/3}k^{2/3}}{r^{1/3}(NT)^{1/2}} \ll \frac{Qk^{2/3}}{(NT)^{1/2}}=k^{\epsilon+\eta-1/3}$$  
			in this case. 
			Hence,  taking it into account,  we get  
			\begin{align} \label{omeg}
				\Omega_{\pm}^{\neq 0}
				&	\ll  \frac{k^\epsilon r^2C^7(TN)^{1/2}} {n_{1}^2q_1 Q^2M_1N}\times k^{\eta-1/3}\left(\frac{Cn_1M_1}{q_1}+M_1^2\right).
			\end{align}
			Note that  
			\begin{align*}
				\frac{Cn_1}{q_1}+M_1 \ll \frac{Qn_1}{q_1}+M_0 \ll  \frac{n_1k^\epsilon}{q_1}\sqrt{\frac{N}{T}}+\frac{Q^2k^{2+\epsilon}}{N} 
				& \ll (n_1,r)k^\epsilon \sqrt{\frac{N}{T}}+\frac{k^{2+\epsilon}}{T} 
				& \ll \frac{k^{2+\epsilon}}{T},
			\end{align*}
			where we used $M_0\ll Q^2k^{2+\epsilon}/N$, $Nr^2 \ll k^{3+\epsilon}$, $Q=k^\epsilon\sqrt{N/T}$, $T \ll k$ and $n_1/q_1 \leq (n_1,r)$.
			Thus, on plugging the above bound into \eqref{omeg},  we get
			\begin{align} 
				\Omega_{\pm}^{\neq 0}
				&	\ll  \frac{k^\epsilon r^2C^7(TN)^{1/2}} {n_{1}^2q_1 Q^2N}\times k^{\eta-1/3} \times\frac{k^{2+\epsilon}}{T} \notag.
			\end{align}
			On substituting   the above bound in place of $\Omega_{\pm}$ in \eqref{S(N)after cauchy}, we see that   $	S_{r, generic}^{\neq 0}(N) $ is dominated by 
			\begin{align*}
				& \mathop{\sup_{ \substack{ C \ll Q}}}\frac{N^{5/3+\epsilon}}{QTr^{2/3}C^3}\sum_{\pm}\sum_{\frac{n_1}{(n_1,r)}\ll C}n_1^{1/3}\Theta^{1/2}\sum_{\frac{n_1}{(n_1,r)}|q_1|(n_1r)^\infty}\left( \frac{ r^2C^7(TN)^{1/2}} {n_{1}^2q_1 Q^2N}\right)^{1/2} \times \frac{k^{5/6+\eta/2}}{\sqrt{T}}\\
				&\ll \mathop{\sup_{ \substack{   C \ll Q}}}\frac{N^{5/3+\epsilon}}{QTr^{2/3}C^3}\frac{r(TN)^{1/4}C^{7/2}}{Q\sqrt{N}}\sum_{n_1\ll Cr}n_1^{-2/3}\Theta^{1/2}\sum_{\frac{n_1}{(n_1,r)}|q_1|(n_1r)^\infty}\frac{1}{q_1^{1/2}} \times \frac{k^{5/6+\eta/2}}{\sqrt{T}} \notag \\
				&\ll  \mathop{\sup_{ \substack{ C \ll Q}}}\frac{N^{5/3+\epsilon}}{QTr^{2/3}}\frac{r(TN)^{1/4}C^{1/2}}{Q\sqrt{N}}\sum_{n_1\ll Cr}\frac{\sqrt{(n_1,r)}}{n_1^{7/6}}\Theta^{1/2} \times \frac{k^{5/6+\eta/2}}{\sqrt{T}} \\
				&\ll  N^{1/2}k^{3/2-1/6+3\eta/4}.
			\end{align*}
			Hence the lemma follows. 
		\end{proof}
		\subsection{Estimates for the error term}\label{estimates for the error term} In this subsection, we give estimates for  $S_r(N)$ corresponding to the non-generic case  $n_2^\star N \ll k^\epsilon$ (see  Lemma \ref{n*}). Recall from \eqref{S3 for nongeneric}  that, if $n_2^\star N=n_1^2n_2N/(q^3r) \ll k^\epsilon$, then we have 
		\begin{align}  \label{S3 for error}
			\mathrm{S_3}
			=q  \sum_{\pm} \sum_{n_{1}|qr} \sum_{n_{2}=1}^{\infty}  \frac{\lambda_{\pi}(n_1,n_2)}{n_{1} n_{2}} S\left(r \bar{a}, \pm n_{2}; qr/n_{1}\right) G_{\pm} \left(n_2^\star\right),
		\end{align} 
		where $G_{\pm}(n_2^\star)$ is as  defined  in  \eqref{gl3 integral transform 2 }. On plugging  \eqref{S3 for error} and \eqref{S2 final} in place of $\mathrm{{S_3}}$ and $\mathrm{{S_2}}$ respectively into \eqref{SN after dfi} we arrive at  
		\begin{align} \label{SN after error}
			&\frac{2\pi i^kN^{1-it}}{QT} \sum_{1\leq q\leq Q}\,\frac{1}{q}   \sum_{\pm} \sum_{n_{1}|qr}  \sum_{ n_{2}\ll \frac{q^3rk^\epsilon}{ n_1^2 N}}  \frac{\lambda_{\pi}(n_1,n_2)}{n_{1} n_{2}} \\
			& \times   \sum_{M\leq m \leq M_0}\,\lambda_f(m) \, \mathcal{C_{\pm}}(...)\, \mathrm{I}_4(q,m,n_1^2n_2) 
			+	O(k^{-2020}),\notag
		\end{align}
		where 
		\begin{align}\label{character}
			\mathcal{C_{\pm}}(. . .)&:=\sideset{}{^\star}\sum_{a \, {\rm mod} \, q}S(r \bar a, \pm n_2; qr/n_1)e\left(\frac{\bar{a}m}{q}\right) \\
			&=\sum_{d|q}d\mu\left(\frac{q}{d}\right)\sideset{}{^ \star}\sum_{\substack{\alpha \, {\rm mod} \, qr/n_1 \\ n_1\alpha\equiv-m \, {\rm mod} \, d}}e\left(\pm\frac{\bar{\alpha}n_2}{qr/n_1}\right) \notag \\
			&\ll (n_1,m,q)\left(q+\frac{qr}{n_1}\right) \ll  \sqrt{(n_1,m)}  \sqrt{(n_1,q)}\left(q+\frac{qr}{n_1}\right), \notag
		\end{align}  
		and 
		\begin{align*}
			\mathrm{I}_4(q,m,n_1^2n_2)=\int_{\mathbb{R}}\,W(x/Q^\epsilon)\,\int_{\mathbb{R}}\,V\left(\frac{t}{T}\right) \,{g(q,x)}\,	\mathrm{I_2}(m,\,q,\,x) \,G_{\pm} \left(n_2^\star\right)\,\mathrm{d}t \, \mathrm{d}x,
		\end{align*}
		with 
		\begin{align*}
			\mathrm{I_2}(m,\,q,\,x)=\int_0^{\infty}U(y) y^{-it}e\left(\frac{-Nxy}{qQ}\right)J_{k-1}\left(\frac{4\pi \sqrt{mNy}}{q}\right)\mathrm{d} y,
		\end{align*}
		and 
		\begin{align}\label{contour}
			G_{\pm}(n_2^\star) &=\frac{1}{2 \pi i} \int_{(\sigma)} (n_2^\star)^{-s} \, \gamma_{\pm}(s)\tilde{g}(-s)  \, \mathrm{d}s  \\
			& =\frac{N^{it}}{2 \pi } \int_{-\infty}^{\infty}  \frac{\gamma_{\pm}(\sigma+i\tau)}{ (n_2^\star N)^{\sigma+i\tau}} \int_{0}^{\infty}\, e\left(\frac{z_1Nx}{qQ}\right)V\left(z_1\right)z_1^{-\sigma-i\tau+it}\, \frac{\mathrm{d}\, z_1}{z_1} \, \mathrm{d}\tau, \notag
		\end{align}
		where $\sigma > -1 + \max \{-\Re({\alpha}_{1}), -\Re({ \alpha}_{2}), -\Re({\alpha}_{3})\}$.
		On analysing the $x$-integral and the  $t$-integral following  Lemma \ref{restriction on u }, we get the  following restriction $$|z_1-y|\ll k^\epsilon q/QT.$$
		Thus, on replacing $z_1$ by $y+u$ with $u \ll k^\epsilon q/QT $, we essentially  arrive at
		\begin{align*}
			\mathrm{I}_4(q,m,n_1^2n_2)=\frac{1}{2\pi} \int_{-\infty}^{\infty} \frac{\gamma_{\pm}(\sigma+i\tau)}{ (n_2^\star N)^{\sigma+i\tau}}\int_{\mathbb{R}}V\left(\frac{t}{T}\right) N^{it}\,  \int_{u \ll \frac{k^\epsilon q}{QT}}\, \mathrm{I}_u	\, \mathrm{I_5}(m,\,q,\,u, \tau)\, \mathrm{d}u \,\mathrm{d}t \mathrm{d}\tau,
		\end{align*}
		where 
		\begin{align*}
			\mathrm{I}_{u}=\int_{\mathbb{R}}W(x/Q^{\epsilon})g(q,x)e\left(\frac{Nxu}{qQ}\right)\, \mathrm{d}x,
		\end{align*}
		and 
		$$ \mathrm{I_5}(m,\,q,\,u, \tau)=\int_{0}^{\infty}U_{t,u,\tau}(y) y^{-i\tau}J_{k-1}\left(\frac{4\pi \sqrt{mNy}}{q}\right)\,\mathrm{d}y, $$
		with $U_{t,u,\tau}(y)=U(y)y^{-\sigma}(1+u/y)^{-\sigma -i\tau+it}.$ On analysing   $ \mathrm{I_5}(m,\,q,\,u, \tau)$ like  $	\mathrm{I}_{\pm}(m,\tilde{N}w,q)$ (see   Lemma \ref{y-integral bound}), we get 
		$$ \mathrm{I_5}(m,\,q,\,u, \tau) \ll \frac{k^\epsilon q^{1/2}}{{(mN)}^{1/4}}.$$
		We now move the contour $\sigma$ in \eqref{contour}  to the left up to $\sigma =-5/2$ passing through the poles given by 
		$$\frac{1+\sigma +\Re{\alpha_{i} +\ell}}{2}=0 \iff \sigma =-1-\Re{\alpha_i}-\ell.$$ 
		Thus, on treating the $u$ and $t$-integral trivially,  we get 
		\begin{align*}
			\mathrm{I}_4(q,m,n_1^2n_2)	 \ll {(n_2^\star N)^{5/2}}& \frac{k^{\epsilon}q^{3/2}}{Q(mN)^{1/4}} \int_{-\infty}^{\infty} |\gamma_{\pm}(-5/2+i\tau)| \mathrm{d}\tau  \\
			&+\frac{k^{\epsilon}q^{3/2}}{Q(mN)^{1/4}}+\sum_{\ell=0,1} \sum_{i=1}^3(n_2^\star N)^{1+\ell+\Re \alpha_i}.
		\end{align*}
		Now using the Stirling  bound 
		$$|\gamma_{\pm}(-5/2+i\tau)| \ll (1+|\tau|)^{3(-5/2+1/2)}=(1+|\tau|)^{-6},$$ 
		we arrive at 
		\begin{align*}
			\mathrm{I}_4(q,m,n_1^2n_2) \ll\frac{k^{\epsilon}q^{3/2}}{Q(mN)^{1/4}} \left((n_2^\star N)^{5/2}+\sum_{\ell=0,1}\sum_{i=1}^3(n_2^\star N)^{1+\ell+\Re \alpha_i}\right).
		\end{align*}
		Note that $(n_2^\star N)^{5/2}=(n_2^\star N)^{1/4+9/4} \ll k^{\epsilon}(n_2^\star N)^{1/4} $, and 
		$$\sum_{i=1}^3(n_2^\star N)^{1+\ell+\Re \alpha_i}=\sum_{i=1}^3(n_2^\star N)^{1/2+\beta_i} \ll k^{\epsilon}(n_2^\star N)^{1/2} \ll k^{\epsilon}(n_2^\star N)^{1/4} $$
		as   $1+\ell +\Re \alpha_i =1/2+\beta_i$ for some $\beta_i >0$. Thus we get 
		\begin{align}
			\mathrm{I}_4(q,m,n_1^2n_2) \ll \frac{k^{\epsilon}q^{3/2}}{Q(mN)^{1/4}} \left(n_2^\star N \right)^{1/4}=\frac{k^{\epsilon}q^{3/4}(n_1^2n_2)^{1/4}}{Qm^{1/4}r^{1/4}}.
		\end{align}
		%
		%
		%
			Thus, on plugging the above bound and  the bound \eqref{character} for $	\mathcal{C_{\pm}}(. . .)$  into \eqref{SN after error} and then  estimating the sum over $m$ using the Ramanujan bound on average, we see that \eqref{SN after error} is dominated by
			\begin{align} 
				&  \sum_{1\leq q\leq Q}\frac{{NM_0^{3/4}}}{Q^{2}Tr^{1/4}}\,  \sum_{n_{1}|qr} \sum_{ n_{2}\ll \frac{q^3rk^\epsilon}{ n_1^2 N}}  \frac{|\lambda_{\pi}(n_1,n_2)|}{n_{1} n_{2}}(n_1^2n_2)^{1/4} \sqrt{(n_1,q)}\left(1+\frac{r}{n_1}\right).
			\end{align}
			We estimate the sum over  $n_1$ and $n_2$ as follows:
			\begin{align*}
				&\sum_{n_{1}|qr}  \sum_{ n_{2}\ll \frac{q^3rk^\epsilon}{ n_1^2 N}}   \frac{ |\lambda_{\pi}(n_1,n_2)|}{n_{1} n_{2}}(n_1^2n_2)^{1/4} \sqrt{(n_1,q)} \left(1+\frac{r}{n_1}\right)\\
				& \ll  \sum_{n_{1}|qr}  \sum_{ n_{2}\ll \frac{q^3rk^\epsilon}{ n_1^2 N}}   {|\lambda_{\pi}(n_1,n_2)|\frac{r}{\sqrt{n_2}}} \\
				&\ll 	\left( \mathop{\sum \sum}_{n_{1}^{2} n_{2} \ll k^\epsilon q^3r/N}|\lambda_{\pi}(n_1,n_2)|^2 \right)^{1/2}   \left( \sum_{n_{1}|qr} \sum_{n_{2}=1}^{\infty}\frac{r^2}{n_2} \right)^{1/2} \\
				& \ll \frac{q^{3/2}{r^{3/2}}}{\sqrt{N}}.
			\end{align*}
			Hence  the contribution of   the terms  $n_1^2n_2N/(q^3r) \ll k^\epsilon$  to $S_r(N)$  is dominated by
						\begin{align} \label{SN for error term}
			 &  k^{\epsilon}\sum_{1\leq q\leq Q}\frac{{NM_0^{3/4}}}{Q^{2}Tr^{1/4}}\, \frac{q^{3/2}{r^{3/2}}}{\sqrt{N}}\ll  \sqrt{N}k^{1+2\eta+3\eta/8 +\epsilon},
			\end{align}
			where we used $M_0 \ll k^{2+\epsilon}/T$ and $Nr^2 \ll k^{3+\epsilon}$.
			\section{\bf Conclusion: Proof of Theorem \ref{Main}} We now pull together the bounds from Lemma \ref{SN bound for zero frq}, Lemma \ref{small q bound}, Lemma \ref{SN generic bound final} and  \eqref{SN for error term} to get that 
			\begin{align*}
				\frac{S_r(N)}{N^{1/2}k^{3/2+\epsilon}}\ll k^{-1/2+2\eta +3\eta/8}+r^{1/2}k^{-\eta/2}+r^{1/2}\frac{k^{3/2-\eta/2}}{N^{1/2}}+k^{-1/6+3\eta/4}.
			\end{align*}
			Using $k^{3-\theta} \ll Nr^2\ll k^{3+\epsilon}$ and $r \ll k^{\theta}$, we further get
			\begin{align*}
				\frac{S_r(N)}{N^{1/2}k^{3/2+\epsilon}}\ll k^{-1/2+2\eta+3\eta/8}+k^{\theta/2-\eta/2}+k^{2\theta-\eta/2}+k^{-1/6+3\eta/4}.
			\end{align*}
			Hence to get subconvexity, we need  all of the above exponents to be negative. So the  first and the  third term gives  $4/19>\eta >4\theta$, and consequently the  third and the fourth terms dominate the rest. Thus the above bound reduces to 
			$$\frac{S_r(N)}{N^{1/2}k^{3/2+\epsilon}}\ll k^{2\theta-\eta/2}+k^{-1/6+3\eta/4}.$$
			The optimal choice for $\eta$ is given by $\eta=8\theta/5+2/15$. On  plugging this in Lemma \ref{AFE}, we get 
			$$L(1/2, \pi \times f) \ll k^{3/2+ 6\theta/5-1/15+\epsilon}+k^{3/2-\theta/2+\epsilon}, $$
			and with the optimal choice $\theta =2/51$, we obtain the bound given in Theorem \ref{Main}.


\begin{thebibliography}{999}
				
				\bibitem{BR}
				J. Bernstein and A. Reznikov: \emph{Periods, subconvexity of $L$-functions and representation theory}, J. Differential Geom. {\bf 13} (2005), 129--141.
				
				\bibitem{BKY} V. Blomer, R. Khan and M. Young: \emph{Distribution of mass of holomorphic cusp forms}, Duke Math. J. {\bf 162}(2013),  no. 14, 2609--2644.
				\bibitem{Blomer} V. Blomer and J. Buttcane: \emph{On the subconvexity problem for $L$-functions on $GL(3)$},  Ann. Sci. Éc. Norm. Supér. (4)  {\bf 53} (2020), no. 6, 1441--1500. 
				
				
				
				\bibitem{Burg}
				D. A. Burgess: \emph{On character sums and L-series. II}, Proc. London. Soc. {\bf 13} (1963),  524--536.
				
				
				\bibitem{CI}
				B. Conrey and  H.  Iwaniec: \emph{The cubic moment of central values of automorphic $L$-functions}, Ann. of Math. { \bf 151} (2000), 1175--1216.
				
				\bibitem{DFI}
				W. Duke,  J.  Friedlander and  H.  Iwaniec: \emph{Bounds for automorphic $L$-functions}, Invent. Math. { \bf 112} (1993), 1--8.
				
				
				\bibitem{DFI-2} 
				W. Duke,  J. Friedlander and  H. Iwaniec: \emph{Bounds for automorphic $L$-functions. II}, Invent. Math. { \bf 115} (1994), 219--239.
				
				\bibitem{DFI-2.1} 
				W. Duke,  J. Friedlander and  H. Iwaniec: \emph{Erratum: Bounds for automorphic $L$-functions. II}, Invent. Math. { \bf 140} (2000), 227--242.
				
				
				\bibitem{gold} D. Goldfeld: \emph{Automorphic forms and $L$-functions for the group $GL(n,\mathbb{R})$}, Cambridge Stud. Adv. Math. , {\bf 99}, Cambridge Univ. Press, Cambridge, 2006.
				
				\bibitem{gl3gl2ood}
				A. Good: \emph{The square mean of Dirichlet series associated with cusp forms}, Mathematika {\bf 29} (1982),  278--295.
				\bibitem{HM} G. Harcos and P. Michel: \emph{The subconvexity problem for Rankin-Selberg L-function and equidistribution of Heegner points. II}, Invent. Math. {\bf 163} (2006), 581--655.
				
				\bibitem{BingH}
				B. Huang: \emph{On the Rankin-Selberg problem},  Math. Ann. { \bf 381} (2021), no. 3-4,  1217--1251. 
				
				\bibitem{Huxley}
				M. N. Huxley: \emph{Area, lattice points, and exponential sums}, London Mathematical Society Monographs. New Series, {\bf 13}, The Clarendon Press, Oxford University Press, New York, 1996. Oxford Science Publications. 
				
				\bibitem{iwa} H. Iwaniec: \emph{The spectral growth of automorphic $L$-functions}, J. Reine angew. Math. {\bf 428} (1992), 139--159.
				
				\bibitem{i+k} H. Iwaniec and E. Kowalski: \emph{Analytic number theory}, American Mathematical society colloquium publications,  {\bf 53}, American Mathematical Society, Providence, RI, 2004.
				
				\bibitem{KMV} E. Kowalski, P. Michel and J. Vanderkam:\emph{ Rankin-Selberg L-functions in the level aspect}, Duke Math. J. {\bf 114} (2002), 123--191.
%
				
				\bibitem{KMS}
				S. Kumar,  K. Mallesham and  S.  Singh: \emph{Sub-convexity bound for $GL(3) \times GL(2)$ $L$-functions: $GL(3)$-spectral aspect}, https://arxiv.org/abs/2006.07819, 2020.
				
				\bibitem{Langer}
				R. E. Langer: \emph{On the asymptotic solutions of ordinary differential equations, with an application to the Bessel functions of  large order},  Transactions  of the  American Mathematical Society,  {\bf 33} (1931), no. 1, 23--64.
				\bibitem{Mic} P. Michel:  \emph{The subconvexity problem for Rankin-Selberg L-function and equidistribution of Heegner points}, Ann. of Math. {\bf 160} (2004), 185--236.
				
				\bibitem{MV}
				P. Michel and A. Venkatesh: \emph{The subconvexity problem for $GL(2)$}. Publ. Math. IHES {\bf 111} (2010),  171--280.
				
				
				\bibitem{miller-schmid} S. D. Miller and  W. Schmid: \emph{Automorphic distributions, L-functions, and Voronoi summation for  $GL(3)$}, Ann. of Math. {\bf 164} (2006), 423--488.
				
				
				\bibitem{LLY} Y. Lau, J. Liu, and Y. Ye: \emph{A new bound $k^{2/3+\epsilon}$ for  Rankin-Selberg $L$-functions for Hecke congruence subgroups}, IMRP Int. Math. Res. Pap. (2006), Art. ID 35090, 78 pp. 
				
				\bibitem{Li*} X. Li: \emph{The central value of the Rankin-Selberg $L$-functions}. Geom. Funct. Anal. {\bf 18} (2009), 1660--1695.
				
				\bibitem{Li} X. Li: \emph{Bounds for $GL(3) \times GL(2)$ $L$-functions and $GL(3)$ $L$-functions}, Annals of Math. {\bf 173} (2011), 301--336.
				
						\bibitem{Lin-Mic-Saw} Y. Lin, P. Michel and W. Sawin, \emph{Algebraic twists of $GL(3) \times GL(2)$ $L$-functions},   Amer. Journ. Math.  AJM 145.2, April 2023. 
						
						
						
				\bibitem{munshi1} R. Munshi: \emph{The circle method and bounds for L-functions-III}, Journal of the American Mathematical Society {\bf 28} (2015), 913--938.
				
				
				\bibitem{annals}
				R. Munshi: \emph{The circle method and bounds for $L$-functions-IV: subconvexity for twist of $GL(3)$ $L$-functions}, Annals of  Math. {\bf 182} (2015),  617--672.
				
				\bibitem{munshi12} R. Munshi: \emph{Subconvexity for $GL(3) \times GL(2)$ $L$-functions in $t$-aspect},  J. Eur. Math. Soc. {\bf 24}(2022), no. 5, 1543--1566.
				
				\bibitem{rankin}
				R. Rankin: \emph{The vanishing of Poincar\'e series}, Proc. Edin. Math. Soc. {\bf 23} (1980), no. 2, 151--161.
				\bibitem{Sar} P. Sarnak: \emph{Estimates for Rankin-Selberg $L$-functions and quantum unique ergodicity}, J. Funct. Anal. {\bf 184} (2001), 419--453.
				
				
				\bibitem{Venk} A.  Venkatesh:  \emph{ Sparse equidistribution problems, period bounds and subconvexity}, Annals of Math. (2) {\bf 172} (2010), no. 2, 989--1094.
				
	
				
					\bibitem{prahlad} 
				P. Sharma, \emph{Subconvexity for $GL(3) \times GL(2)$ twists}. With an appendix by Will Sawin. Adv. Math. {\bf 404}(2022), part B,  Paper No. 108420, 47 pp.
				
				
				
				\bibitem{Weyl}
				H. Weyl: \emph{Zur absch\"{a}tzung von $\zeta(1+it)$}, Math. Z. {\bf 10} (1921), 88--101.
				
				
			\end{thebibliography}
		\end{document}